\newcommand{\ann}{\mathrm{ann}}
\newcommand{\bbGamma}{{\mathpalette\makebbGamma\relax}}
\newcommand{\makebbGamma}[2]{%
\raisebox{\depth}{\scalebox{1}[-1]{$\mathsurround=0pt#1\mathbb{L}$}}%
}
\newcommand{\Mdef}[2]{\newcommand{#1}{\relax \ifmmode #2 \else $#2$\fi}}
\Mdef{\bT}{\mathbb{T}}
\newcommand{\sfT}{\mathsf{T}}
\newcommand{\sfD}{\mathsf{D}}
\newcommand{\tensor}{\otimes}
\newcommand{\unit} {\mathbbm{1}}
\newcommand{\unitadalg}{\bS^{alg}_{\ad}}
\newcommand{\cC}{\mathcal{C}}
\newcommand{\D}{\mathcal{D}}
\newcommand{\cG}{\mathcal{G}}
\newcommand{\cW}{\mathcal{W}}
\newcommand{\fp}{\mathfrak{p}}
\newcommand{\fq}{\mathfrak{q}}
\newcommand{\gen}{\mathfrak{g}}
\newcommand{\fm}{\mathfrak{m}}
\newcommand{\fn}{\mathfrak{n}}
\newcommand{\modules}{\text{-mod}}
\newcommand{\cO}{\mathcal{O}}
\newcommand{\cK}{\mathcal{K}}
\newcommand{\bP}{\mathbb{P}}
\newcommand{\bL}{\mathbb{L}}
\newcommand{\T}{\mathbb{T}}
\newcommand{\holim}{\mathop{ \mathop{\mathrm {holim}} \limits_\leftarrow} \nolimits}
\newcommand{\bS}{\mathbb{S}}
\newcommand{\cF}{\mathcal{F}}
\newcommand{\Hhat}{\hat{H}}
\newcommand{\bZ}{\mathbb{Z}}
\newcommand{\Z}{\mathbb{Z}}
\newcommand{\Q}{\mathbb{Q}}
\newcommand{\bQ}{\mathbb{Q}}
\newcommand{\Qc}{\textbf{QCoh}}
\newcommand{\cA}{\mathcal{A}}
\newcommand{\cE}{\mathcal{E}}
\newcommand{\cCad}{\cC_\mathrm{ad}}
\newcommand{\cCsep}{\cC_\mathrm{sep}}
\newcommand{\cCLk}{\cC_\mathrm{L\kappa}}
\newcommand{\cCLkqc}{\cC_\mathrm{L\kappa}^{\qc}}
\newcommand{\cCadwqce}{\cC_\mathrm{ad}^{\wqce}}
\newcommand{\cCadqce}{\cC_\mathrm{ad}^{\qce}}
\newcommand{\cCadqc}{\cC_\mathrm{ad}^{\qc}}
\newcommand{\cCade}{\cC_\mathrm{ad}^{e}}
\newcommand{\cCsepqcie}{\cC_\mathrm{sep}^{\qcie}}
\newcommand{\cCsepqc}{\cC_\mathrm{sep}^{\qc}}
\newcommand{\cGad}{\cG_\mathrm{ad}}
\newcommand{\Lfm}{L_0^{\fm}}
\newcommand{\RmmodLfmhat}{\Lfm\mbox{-$\Rm \modules$}}
\newcommand{\lra}{\longrightarrow}
\newcommand{\lla}{\longleftarrow}
\newcommand{\e}{\mathrm{e}}
\newcommand{\qcie}{\mathrm{qcie}}
\newcommand{\ie}{\mathrm{ie}}
\newcommand{\qce}{\mathrm{qce}}
\newcommand{\qc}{\mathrm{qc}}
\newcommand{\we}{\mathrm{we}}
\newcommand{\wqc}{\mathrm{wqc}}
\newcommand{\wqcie}{\mathrm{wqcie}}
\newcommand{\wqce}{\mathrm{wqce}}
\newcommand{\wie}{\mathrm{wie}}
\newcommand{\wqck}{\mathrm{wqc\kappa}}
\newcommand{\cCb}{\operatorname{Ho}(\cC)}
\newcommand{\supp}{\mathrm{supp}}
\newcommand{\Hom}{\mathrm{Hom}}
\newcommand{\spc}{\mathrm{Spc}}
\newcommand{\spco}{\mathrm{Spc}^{\omega}}
\newcommand{\spec}{\mathrm{Spec}}
\newcommand{\Gie}{\Gamma_{\ie}}
\newcommand{\Gqc}{\Gamma_{\qc}}
\newcommand{\Gqce}{\Gamma_{\qce}}
\newcommand{\Ge}{\Gamma_{e}}
\newcommand{\cCsepie}{\cCsep^{ie}}
\newcommand{\sigmab}{\overline{\sigma}}
\newcommand{\cCal}{\cC_{\alpha}}
\newcommand{\cCFal}{\cC_{F_*\alpha}}
\newcommand{\cCFalqc}{\cC_{F_*\alpha}^{qc}}
\newcommand{\cCalqc}{\cC_{\alpha}^{qc}}
\newcommand{\cN}{\mathcal{N}}
\newcommand{\cV}{\mathcal{V}}
\newcommand{\cQ}{\mathcal{Q}}
\newcommand{\etahat}{\hat{\eta}}
\newcommand{\epshat}{\hat{\epsilon}}
\newcommand{\sep}{\mathrm{sep}}
\newcommand{\Lk}{\mathrm{L\kappa}}
\newcommand{\st}{\; | \;}
\newcommand{\pb}{p}
\newcommand{\aaa}{a}
\newcommand{\ad}{\mathrm{ad}}
\newcommand{\cell}{R}
\newcommand{\cS}{\mathcal{S}}
\newcommand{\qqed}{\qed \\[1ex]}
\newcommand{\cCm}{\cC_\fm}
\newcommand{\Rm}{R_\fm}
\newcommand{\fibre}{\mathrm{fibre}}
\newtheorem{thm}{Theorem}[section]
\newtheorem*{thm*}{Theorem}
\newtheorem{lemma}[thm]{Lemma}
\newtheorem{prop}[thm]{Proposition}
\newtheorem{cor}[thm]{Corollary}
\theoremstyle{definition}
\newtheorem{defn}[thm]{Definition}
\newtheorem{defn-prop}[thm]{Definition--Proposition}
\newtheorem{example}[thm]{Example}
\newtheorem{remark}[thm]{Remark}
\DeclareRobustCommand{\SkipTocEntry}[5]{}
\begin{document}
\title{Separated and complete adelic models for one-dimensional Noetherian tensor-triangulated categories}

\author[Balchin]{Scott Balchin}
\address[Balchin]{Max Planck Institute for Mathematics, Vivatsgasse 7,
  53111 Bonn, Germany}
\email{balchin@mpim-bonn.mpg.de}
\author[Greenlees]{J.P.C.Greenlees}
\address[Greenlees]{Warwick Mathematics Institute, Zeeman
  Building, Coventry, CV4 7AL, UK}
\email{john.greenlees@warwick.ac.uk}
 \date{\today}

\begin{abstract}
We prove the existence of various adelic-style models for rigidly small-generated
tensor-triangulated categories whose Balmer spectrum is a
one-dimensional Noetherian topological space. This special case of our
general programme of giving adelic models is particularly concrete and
accessible, and we  illustrate it with examples from algebra, geometry, topology and representation theory.
\end{abstract}

\maketitle

\tableofcontents

\section{Introduction}

In this paper we are concerned with triangulated categories $\sfT$ equipped with a compatible symmetric
monoidal structure $(\tensor, \unit)$ forming a 
tensor-triangulated category in the sense of \cite{BalmerSpc}. Our
particular focus  is on those tensor-triangulated categories which
have the formal properties of the derived category of a 1-dimensional
commutative Noetherian domain or an irreducible curve. The
irreducibility assumption is inessential, and made only to remove
distractions, whereas the Noetherian assumption is essential for the
simplicity of our approach. Many interesting examples are covered by this hypothesis, some of which are discussed in \Cref{sec:examples}.

\subsection{Adelic and algebraic models}
Structural features of tensor-triangulated categories are controlled
by the Balmer spectrum~\cite{BalmerSpc}, which is the counterpart of
the Zariski prime spectrum of a commutative ring. Given a prime $\fp$ in the Balmer spectrum
of $\sfT$, one may define the localization at~$\fp$ (denoted $L_{\fp}$)
and  the completion at $\fp$ (denoted $\Lambda_{\fp}$). These
constructions are analogues of those in commutative algebra.

The idea of decompositions reflecting the dimensional
filtration of primes is pervasive in commutative
algebra and algebraic geometry. The aim  is to show that the subquotients in this 
filtration can be understood in terms of primes of a single
dimension. There are decisions to be made about the way the pure
strata are described, and how the assembly information is packaged. 
Since we are restricting ourselves to  the 1-dimensional case in this
paper there is
only one step to the assembly process, often described in terms of recollements.
Discussions of this type can be found in~\cite{stratified,Barthel,HA},
and we plan to return to the higher dimensional case elsewhere. 

The adelic models project \cite{BalchinGreenlees} 
also follows this general pattern, but has some 
distinctive features. The word `adelic' arises because it aims to construct models
for $\mathsf{T}$ which are built from localized-completed objects,
with the assembly information packaged in adele-like localizations of 
products.  The second 
distinctive element is the idea of trying to make the building blocks
and assembly information as algebraic as possible. In the most
favourable circumstances the building blocks are (derived) categories of modules
over a commutative ring and the assembly data is described in terms of 
a diagram of rings. 

This project arose from the  earlier work of the second
author and collaborators on algebraic models for rational
$G$-spectra, starting with the circle group~\cite{Greenlees99, GreenleesShipley18,
  BarnesGreenleesKedziorekShipley17}. In that case,  the adelic model could be 
shown to be formal, hence showing both that the homotopy category is
determined by the pieces it is built from and also that the result is
algebraic.

It is therefore natural that there are two elements to the adelic
project. The first breaks down a tensor-triangulated category in an adelic fashion, and
the second  studies algebraic examples of this type. 
The present paper follows this pattern, by firstly establishing that
adelic models exist in considerable generality, and secondly showing
that in an algebraic context these models may be further simplified.  
It is in the nature of the goals that our principal examples will be close to
algebra (modules over commutative rings, algebraic geometry,
representation theory, {\em rational} $G$-spectra). Although our homotopical 
models apply generally,  the extent to which
they add to alternative approaches will depend on 
calculational specifics of individual cases. 

This paper focuses on the case when the  Balmer spectrum is a
Noetherian space, where the topology is determined by the Balmer
spectrum  as a partially ordered set. In the 1-dimensional case 
this just means the space of closed points has the cofinite
topology. Many naturally occurring examples are not Noetherian:
ongoing work of the authors with Barthel aims towards constructing models
in this more general case via the yoga of spectral spaces~\cite{prismatic}.

\subsection{Objectwise decomposition}

The one-dimensional irreducible case of~\cite[Theorem 8.1]{BalchinGreenlees} tells
us that under some mild hypothesis on $\sfT$ the unit object can be
recovered by the  homotopy pullback of ring objects
\begin{equation}\label{adelicsquare}
\begin{gathered} \xymatrix{
\unit \ar[r] \ar[d]  & L_{\gen} \unit \ar[d] \\
\displaystyle{\prod_{\fm}} \Lambda_{\fm} \unit \ar[r] & L_{\gen} \displaystyle{\prod_{\fm}} \Lambda_{\fm} \unit \rlap{ .}
} \end{gathered}
\end{equation} 
Here $\gen$ is the generic point of the Balmer spectrum and $\fm$ ranges over the closed points (i.e., Balmer minimal primes).

When $\sfT = \sfD(\bZ)$, with unit the integers $\bZ$, this recovers the
classical Hasse square. If $\sfT$ is the homotopy
category of rational $\bT$-spectra (where $\bT$ is the circle group)
with unit the sphere spectrum $\bS$ then this is the Tate square for the family of
 finite subgroups~\cite{GreenleesMay95}.

The idea is to use this fracturing of the monoidal unit to provide
small and computationally advantageous models of $\sfT$ while also
giving an insight into the global structure of the category. The first
model related to this fracturing is the \emph{adelic model}
of~\cite{BalchinGreenlees}. This model reconstructs an object $X \in \mathsf{T}$ from modules over the rings $L_{\gen}\unit, \prod_{\fm}\Lambda_{\fm}\unit$ and $L_{\gen}\prod_{\fm}\Lambda_{\fm}\unit$. In essence, it reconstructs an object $X \in \sfT$ using the homotopy pullback square
\begin{equation}\label{fracture1}
\begin{gathered} \xymatrix{
X \ar[r] \ar[d]  & L_{\gen} \unit \tensor X \ar[d] \\
\left[ \displaystyle{\prod_{\fm}} \Lambda_{\fm} \unit \right]\tensor X \ar[r] &
\left[ L_{\gen} \displaystyle{\prod_{\fm}} \Lambda_{\fm} \unit \right] \tensor X \rlap{ .}
} \end{gathered}
\end{equation}
obtained as the tensor product of Diagram~\ref{adelicsquare} with $X$.

This result goes a long way towards reconstructing $\sfT$ out of
categories of modules over localized-complete rings. However, we note
that modules over the product ring $\prod_\fm \Lambda_\fm \unit$ can
be complicated. The first aim of this paper is to replace this single
module category by the separate module categories over the simpler
rings $\Lambda_\fm \unit$, and we refer to this as the  \emph{separated model}. In essence, it reconstructs an object $X \in \sfT$ using the homotopy pullback square
\begin{equation}\label{fracture2}
\begin{gathered} \xymatrix{
X \ar[r] \ar[d]  & L_{\gen} \unit \otimes X \ar[d] \\
\displaystyle{\prod_{\fm}} \left[ \Lambda_{\fm} \unit \otimes X\right] \ar[r] &
L_{\gen} \displaystyle{\prod_{\fm}} \left[\Lambda_{\fm} \unit \otimes X \right] \rlap{ .}
} \end{gathered}
\end{equation}

Secondly, one might hope to reconstruct $X$ from its actual completions,
which are often more economical, and we will also 
construct a \emph{complete  model}  of this type. In essence, it reconstructs an object
$X \in \sfT$ using the homotopy pullback square
\begin{equation}\label{fracture3}
\begin{gathered} \xymatrix{
X \ar[r] \ar[d]  & L_{\gen} \unit \tensor X 
\ar[d] \\
\displaystyle{\prod_{\fm}} \left[ \Lambda_{\fm}  X\right] \ar[r] &
L_{\gen} \displaystyle{\prod_{\fm}} \left[\Lambda_{\fm}  X \right] \rlap{ .}
} \end{gathered}
\end{equation}

Putting it all together, we have a ladder in which all the horizontal
fibres are equivalent. This means all subrectangles are homotopy
pullback squares, and in particular the three involving $X$ at the top
left are the three discussed above.
$$\xymatrix{
X \ar[r] \ar[d] &L_{\gen}\unit \tensor X\ar[d]\\
(\prod_\fm \Lambda_{\fm}\unit)\tensor X \ar[r] \ar[d] &L_{\gen}(\prod_\fm \Lambda_{\fm}\unit)\tensor X\ar[d] \\
\prod_\fm (\Lambda_{\fm}\unit\tensor X) \ar[r] \ar[d] &L_{\gen}\prod_\fm (\Lambda_{\fm}\unit\tensor X)\ar[d] \\
\prod_\fm (\Lambda_{\fm} X) \ar[r]  &L_{\gen}\prod_\fm (\Lambda_{\fm} X) 
}$$

\begin{remark}
By taking horizontal fibres, one can rebuild arbitrary objects from local and torsion data. Promoting this to a categorical decomposition is undertaken in~\cite{adelict}, which retrieves the torsion model of the second author from~\cite{Greenlees99}. As all of the horizontal fibres coincide, we see that up to homotopy there is only one notion of a torsion model, and the only difference occurs in which category we wish to splice over.
\end{remark}

\subsection{Homotopical and algebraic models}
\label{subsec:htpcalalg}
The point of the paper is to build models based on these objectwise
decompositions. We describe these in detail in Section \ref{sec:framework},
but sketch it here. They all take the form of a diagram
\begin{equation}\label{modulediagram}
\begin{gathered} \xymatrix{
 & V \ar[d] \\
N\ar[r] & Q
} \end{gathered}
\end{equation}
 of modules over the diagram of rings 
\begin{equation}\label{adeliccospan}
\begin{gathered} \xymatrix{
 & L_{\gen} \unit \ar[d] \\
\displaystyle{\prod_{\fm}} \Lambda_{\fm} \unit \ar[r] & L_{\gen} \displaystyle{\prod_{\fm}} \Lambda_{\fm} \unit \rlap{ .}
} \end{gathered}
\end{equation}
obtained from Diagram \ref{adelicsquare} by omitting the unit
object. The
homotopical models simply consist of suitable model structures on the category
of diagrams. The adelic model requires cofibrant objects to have the
horizontal and vertical base change maps weak
equivalences. Accordingly, following \cite{Greenlees99}, we think of
the object as the object $N$
(the {\em nub}) together with the additional structure of $V$ (the
{\em vertex}) and the splicing embodied by maps to $Q$. The separated
model requires in addition that the object $N$ is weakly equivalent to the product of its
idempotent pieces, and slightly weakens the condition on the vertical
base change map. The complete model requires that
$N$ is equivalent to a product and each idempotent piece is
homotopically complete,
and there is no condition on the vertical base change map. The existence of these homotopical models appears as Theorem~\ref{thm:adsepcomp}.

In an algebraic context we can go further in each of these three
cases. In each case (adelic, separated or
complete) the condition on cofibrant objects requires certain base
change maps to
be weak equivalences. We may consider the subcategory  of objects  in
which these maps are not just equivalences but isomorphisms. Under
suitable conditions this category   is well behaved and admits a model
structure by restriction: the subcategory
is said to be a  \emph{skeleton} and is Quillen equivalent to the
homotopical model. The algebraic models are the content of Corollary~\ref{cor:adcellskel} and Theorems~\ref{thm:sepskeleton} and \ref{thm:compskeleton}.

Altogether then we have three homotopical models (adelic, separated
and complete), and in well behaved algebraic contexts each of these
has a  skeleton, giving six models.

 \subsection{Outline of paper}
   
In \Cref{sec:framework} we recall the apparatus of
completions, localizations and diagram categories
from~\cite{BalchinGreenlees}, and recall the general Hasse pullback square for
the unit object.  

The remainder of the paper is divided into three
parts. In \Cref{part1} (Sections \ref{sec:adelicm} and \ref{sec:adelics}), we focus on models which apply rather generally:
they are homotopical in the sense that the important subcategories are
identified by homotopical conditions. That is, we explore the theory
hinted at in \ref{subsec:htpcalalg} above.

In \Cref{part2} (Sections \ref{sec:roadmap} to
\ref{sec:skeletonequiv}), we focus on examples of an algebraic nature
where we can identify much smaller subcategories specified by strict conditions, providing skeletons. 

Finally, in \Cref{sec:examples} (Sections \ref{sec:abgrp} to \ref{sec:Tspectra}) 
we explicitly illustrate the theory for commutative rings,
quasi-coherent sheaves over a curve, the stable module category for the
Klein 4-group and rational circle-equivariant spectra.

\subsection*{Acknowledgements}
The authors were supported by the grant EP/P031080/2. They are
grateful to T.~Barthel, L.~Pol and J.~Williamson for related
discussions of the adelic framework over an extended period,
and to D.~J.~Benson for discussions about the stable module  category
of the  Klein 4-group. They thank the referee for detailed comments. 
The first author would like to thank the Max Plank Institute for  Mathematics for its
hospitality. 
 
\section{The adelic framework}\label{sec:framework}

In this section we briefly recall the relevant machinery from \cite{BalchinGreenlees} that we will use throughout.

\subsection{Noetherian model categories}

We will work in the setting of Quillen model categories.  We are
interested in the homotopy categories of model categories $\cC$ which
are both stable and monoidal.  We continue with the conventions of
\cite{adelic} where the word `compact' refers to the model category
and `small' to the homotopy category.  We assume the
homotopy category $\cCb$ is a rigidly small-generated
tensor-triangulated category~\cite{HoveyMC,HPS}.  
We write  $\cCb^\omega$ for the full subcategory of small
objects, our assumptions guarantee that these object coincide with the dualizable objects.

 We denote by $\mathcal{G} \subseteq \operatorname{Ho}(\cC)^\omega$ a set of compact objects which generate the category $\cCb$. In particular an object $X \in \operatorname{Ho}(\cC)$ is zero if and only if
$[g, X]_* = 0 \text{ for every } g \in \mathcal{G}.$ 

In this context we may introduce the main organizational principle from~\cite{BalmerSpc}.

\begin{defn}
A {\em prime ideal} in a tensor-triangulated category is a 
proper thick tensor ideal $\fp$ with the
property that $a\tensor b\in \fp$ implies that $a$ or $b$ is in $\fp$. The {\em Balmer spectrum} of a tensor-triangulated category $\cCb $ is
the set of prime tensor ideals of the triangulated category of compact objects: 
$$\spco (\cCb)=\{ \fp \subseteq \cCb^\omega \st \fp \mbox{  is prime} \}.$$ 
\end{defn}

We  will restrict attention to  Balmer spectra that are  Noetherian in the sense that
chains of open sets satisfy the ascending chain condition. Noetherian spectral spaces enjoy a simple description.

\begin{lemma}\label{lem:noethersp}
Let $X$ be a Noetherian spectral space. The closed subsets of $X$ are precisely the finite unions of the closures of points. In particular, the topology on $X$ is determined by the poset structure given by the specialization order. 
\end{lemma}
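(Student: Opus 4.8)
The plan is to combine two standard facts: in a Noetherian space every closed subset decomposes as a \emph{finite} union of irreducible closed subsets, and a spectral space is sober, so every irreducible closed subset is the closure of a unique point.

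First I would establish the decomposition into irreducible components by Noetherian induction. Suppose, for contradiction, that the collection $\mathcal{S}$ of closed subsets of $X$ which are \emph{not} finite unions of irreducible closed subsets is nonempty. Since $X$ is Noetherian, its closed subsets satisfy the descending chain condition, so $\mathcal{S}$ has a minimal element $Z$. Now $Z$ is nonempty (the empty set is the empty union) and not irreducible (an irreducible set is trivially such a union), so $Z = Z_1 \cup Z_2$ with $Z_1, Z_2$ proper closed subsets of $Z$. By minimality of $Z$, neither $Z_1$ nor $Z_2$ lies in $\mathcal{S}$, so each is a finite union of irreducible closed subsets; hence so is $Z$, a contradiction. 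Thus every closed subset of $X$ is a finite union of irreducible closed subsets.

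Next I would invoke sobriety. A spectral space is by definition homeomorphic to $\mathrm{Spec}$ of a commutative ring, and in particular it is sober and $T_0$: every nonempty irreducible closed subset $W \subseteq X$ has a unique generic point $w$, i.e. $W = \overline{\{w\}}$. Applying this to each irreducible component in the decomposition above shows that every closed subset of $X$ is a finite union of closures of points. Conversely, each $\overline{\{x\}}$ is closed, and a finite union of closed subsets is closed, so every finite union of closures of points is indeed closed. This proves the first assertion.

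For the ``in particular'' clause, recall that the specialization order records exactly when $y \in \overline{\{x\}}$, and that in a $T_0$ space this is a genuine partial order for which $\overline{\{x\}}$ is precisely the set of points to which $x$ specializes. Hence each $\overline{\{x\}}$, and therefore by the first part every closed subset of $X$, is a subset of $X$ determined purely by the poset $(X,\leq)$ — the closed subsets are exactly the finite unions of the sets $\overline{\{x\}}$. Consequently the collection of closed subsets, equivalently the topology on $X$, is recovered from the specialization poset. The only point worth flagging is that both hypotheses are genuinely used: the Noetherian condition supplies finiteness of the decomposition, while spectrality (sobriety) lets us upgrade ``irreducible closed subset'' to ``closure of a point''; neither step presents a real obstacle, the Noetherian induction being the one small piece of content.
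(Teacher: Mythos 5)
Your argument is correct and is the standard one: Noetherian induction (via the descending chain condition on closed sets) yields the finite decomposition into irreducible closed components, and sobriety of spectral spaces upgrades each component to the closure of a unique point; the converse and the ``in particular'' clause then follow immediately. The paper itself states this lemma without proof, treating it as a well-known fact about Noetherian spectral spaces, so there is no argument in the text to compare against — your write-up simply supplies the omitted standard proof.
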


As in \cite[\S 4]{BalchinGreenlees}, a {\em Noetherian model category} is a cellular, proper, stable,
monoidal model category $\cC$ such that $\cCb$ is a rigidly
small-generated tensor-triangulated category whose Balmer spectrum is
a Noetherian topological space.

We say that a  prime $\fp$ is \emph{visible} if  there is a small
object $K_\fp$ so that $\overline{\{\fp\}} =
\supp(K_{\fp}) = \{\fq \in \spco(\cCb) \mid K_\fp \not\in \fq \}$. In
light of Lemma~\ref{lem:noethersp}, we have the following useful characterisation of Noetherian spectral spaces.

\begin{lemma}[{\cite[Corollary 2.17]{BalmerSpc}}]\label{lem:noeth}
The topological space $\spco(\cCb)$ is Noetherian if and only if all primes are visible.
\end{lemma}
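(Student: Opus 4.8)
This is precisely \cite[Corollary 2.17]{BalmerSpc}, so the plan is to reproduce that argument. Write $X=\spco(\cCb)$ and, for a small object $a$, put $U(a)=X\setminus\supp(a)=\{\fq\in X\st a\in\fq\}$. I will use freely the standard facts about the Balmer spectrum: the $U(a)$ form a basis of \emph{quasi-compact} open subsets of the spectral space $X$ (which in particular is sober and quasi-compact); one has $U(a)\cup U(b)=U(a\tensor b)$ and $U(a)\cap U(b)=U(a\oplus b)$, the first using primeness of the ideals; and the small objects are closed under $\tensor$. Consequently a finite cover of an open $U$ by basic opens can be amalgamated into a single $U(c)$ with $c$ small, so the closed subsets of $X$ of the form $\supp(c)$ with $c$ small are \emph{exactly} those whose complement is quasi-compact; and, unwinding the definition, $\fp$ is visible precisely when $\overline{\{\fp\}}$ is such a closed set. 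Granting this dictionary, the forward implication is immediate: if $X$ is Noetherian then every open subset is quasi-compact (an equivalent form of Noetherianness, alongside the ACC on opens), so in particular $X\setminus\overline{\{\fp\}}$ is quasi-compact and hence $\overline{\{\fp\}}=\supp(K_\fp)$ for a small $K_\fp$; that is, $\fp$ is visible.

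For the converse I would argue contrapositively, showing that if every prime is visible then every open is quasi-compact. Suppose some open is not quasi-compact and let $\mathcal{Z}$ be the collection of closed subsets with non-quasi-compact complement, ordered by inclusion; it is nonempty by assumption and does not contain $\emptyset$ (as $X$ is quasi-compact). The union of an inclusion-chain of non-quasi-compact opens is again non-quasi-compact --- were it quasi-compact it would be covered by finitely many, hence by one, member of the chain, and so would equal that member --- so every chain in $\mathcal{Z}$ has a lower bound (its intersection) in $\mathcal{Z}$, and Zorn's lemma supplies an inclusion-minimal $Z^\ast\in\mathcal{Z}$. Now $Z^\ast$ is irreducible: a decomposition $Z^\ast=A\cup B$ into proper closed subsets would force $X\setminus A$ and $X\setminus B$ to be quasi-compact by minimality, whence $X\setminus Z^\ast=(X\setminus A)\cap(X\setminus B)$ would be quasi-compact (finite intersections of quasi-compact opens in $X$ are quasi-compact, by the $U(a)\cap U(b)=U(a\oplus b)$ formula together with the basis property), contradicting $Z^\ast\in\mathcal{Z}$. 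Since $X$ is sober and $Z^\ast$ is nonempty and irreducible, $Z^\ast=\overline{\{\fp\}}$ for some $\fp$; but then visibility of $\fp$ makes $X\setminus Z^\ast=U(K_\fp)$ quasi-compact, the desired contradiction.

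The main obstacle, and the only genuinely non-formal part, is the converse: the hypothesis of visibility controls only closures of points, so one cannot chase an arbitrary non-quasi-compact open directly but must first extract a minimal bad closed set and verify it is irreducible --- and that verification is exactly where quasi-compact opens being closed under finite intersection enters. The remaining ingredients --- quasi-compactness of the basic opens $U(a)$ and soberness of $X$ --- are part of the spectral-space package established in \cite{BalmerSpc}; everything else is point-set topology together with the elementary behaviour of $\supp$ under $\tensor$ and $\oplus$.
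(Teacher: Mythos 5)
Your proof is correct. The paper itself offers no argument for this lemma --- it is stated with a bare citation to Balmer's Corollary~2.17 --- and what you have written is a faithful and accurate reconstruction of Balmer's proof: the dictionary between visibility of $\fp$ and quasi-compactness of $X\setminus\overline{\{\fp\}}$ (via the support-to-open translation and the amalgamation formulas for $\tensor$ and $\oplus$), the immediate forward direction, and the Zorn/soberness argument extracting a minimal non-quasi-compact-complement closed set and showing it is an irreducible closure of a point. So you are taking the same route the paper implicitly endorses by citation, just spelled out.
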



\subsection{Localizations and completions}
Localization and completion are central to our approach, and we
summarize material from~\cite[\S5,6]{BalchinGreenlees}. We freely use
results about left and right Bousfield localization from~\cite{Hirschhorn03}.

Let $\fp$ be an arbitrary Balmer
prime. The localization at $\fp$, denoted $L_{\fp}$, is
the nullification of the thick tensor subcategory $\fp$. By assumption,
the objects of $\fp$ are all small, and as such $L_\fp$ is a finite
localization, and therefore is smashing in the sense that there is a weak equivalence $L_\fp X \simeq L_\fp \unit \tensor X$. 

For a Balmer prime $\fp$, we choose a  small object
$K_\fp$ such that $\supp(K_\fp) = \overline{\{\fp\}}$, which exists by
the assumption that the Balmer spectrum is
Noetherian. Any two generate the same thick tensor ideal so subsequent
constructions do not depend on the choice. The completion at $\fp$, denoted $\Lambda_{\fp}$ is then
the  Bousfield localization with respect to the object $K_\fp$. In
particular we shall say that a map $f \colon X \to Y$ is a weak
equivalence in $\Lambda_{\fp} \cC$ if $f \tensor K_\fp \colon X
\tensor K_\fp \to Y \tensor K_\fp$ is a weak equivalence in
$\cC$. Unlike the $L_{\fp}$, these completions are not usually smashing,
and do not usually preserve small objects.
Both of these Bousfield localizations exist for Noetherian model
categories and are stable monoidal localizations~\cite[\S
5,6]{BalchinGreenlees}.

We also need to consider cellularization (right Bousfield
localization)  with respect to the thick tensor ideal generated by $K_{\fp}$, giving a
$K_{\fp}\tensor \cG $-equivalence $\Gamma_{\fp}X\lra X$
from a $K_{\fp}\tensor \cG$-cellular object.  This gives a useful equivalence $\Lambda_{\fp}X\simeq \Hom
(\Gamma_{\fp}\unit, X)$. 

For the rest of the article we will  assume in addition that the Balmer
spectrum is 1-dimensional and irreducible. This means it has a single
 generic point $\gen$ (the Balmer-maximal
prime) and a set of closed points $\fm$  (Balmer-minimal primes). 



\begin{remark}
The geometry of completions and localizations suggests highlighting
the downward cone for completion and cellularization  ($\Lambda_\fp =
\Lambda_{\wedge(\fp)}$ and $\Gamma_\fp = \Gamma_{\wedge(\fp)}$) and
the upward cone for localization ($L_\fp =L_{\vee (\fp)}$).  For brevity
we will retain the simpler traditional notation. 
\end{remark}


The main result from~\cite{BalchinGreenlees}  tells us that we can
retrieve the monoidal unit $\unit$ from its localized completions. All
of the models appearing in this paper will come from various
interpretations of this result. 

\begin{thm}[Adelic Approximation Theorem]
\label{thm:AAT}
Let $\cC$ be a 1-dimensional Noetherian model category whose Balmer spectrum is one-dimensional and irreducible. Then the unit object $\unit$ is the homotopy pullback of the following diagram of rings in $\cC$
$$\unit_{ad} := \left[ \begin{gathered} \xymatrix{
& L_{\gen} \unit \ar[d]^-j \\
\displaystyle{\prod_{\fm}} \Lambda_{\fm} \unit \ar[r]_-{L_{\gen}} & L_{\gen} \displaystyle{\prod_{\fm}} \Lambda_{\fm} \unit
} \end{gathered}\right]$$
\end{thm}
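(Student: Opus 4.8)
The plan is to reduce the claim to a purely local statement at each closed point and at the generic point, following the standard strategy for Hasse-type fracture squares. First I would form the homotopy pullback $P$ of the diagram $\unit_{ad}$ and produce a canonical comparison map $\unit \to P$ using the fact that both the vertical map $j$ and the lower horizontal map $L_{\gen}$ arise from the localization morphism $\unit \to L_{\gen}\unit$ tensored into the appropriate ring object; commutativity of the resulting square $\unit \to L_{\gen}\unit$, $\unit \to \prod_{\fm}\Lambda_{\fm}\unit$ is immediate since both composites to $L_{\gen}\prod_{\fm}\Lambda_{\fm}\unit$ factor through $\prod_{\fm}\Lambda_{\fm}\unit \to L_{\gen}\prod_{\fm}\Lambda_{\fm}\unit$ (the relevant localizations are smashing, so $L_{\gen}$ commutes with the product up to the natural map). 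It then suffices to show this comparison map is a weak equivalence, i.e.\ that its fibre $F$ is zero. Since $\cCb$ is generated by $\cG$, it is enough to check $[g, F]_* = 0$ for every $g \in \cG$, and since the squares are built from smashing localizations it suffices after further reduction to check that $F \tensor K_{\fm}$ vanishes for every closed point $\fm$ and that $L_{\gen}F$ vanishes.

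The generic-point check is easy: applying the smashing localization $L_{\gen}$ to the whole square, the map $L_{\gen}\unit \to L_{\gen}L_{\gen}\unit \simeq L_{\gen}\unit$ becomes an equivalence and the left-hand vertical arrow $L_{\gen}(\prod_{\fm}\Lambda_{\fm}\unit) \to L_{\gen}L_{\gen}(\prod_{\fm}\Lambda_{\fm}\unit)$ likewise becomes an equivalence, so the pullback square localizes to a square with two parallel equivalences, forcing $L_{\gen}F \simeq 0$. For the check at a closed point $\fm$ I would tensor the square with $K_{\fm}$ (equivalently, complete at $\fm$): here $L_{\gen}\unit \tensor K_{\fm} \simeq 0$ because $K_{\fm}$ has support in $\overline{\{\fm\}}$, which misses the generic point — so $L_{\gen}(-) \tensor K_{\fm} = 0$ on the unit and hence, by smashing, on $\prod_{\fm'}\Lambda_{\fm'}\unit$ as well. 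Thus the $\fm$-completed square degenerates to showing $\unit \tensor K_{\fm} \to (\prod_{\fm'}\Lambda_{\fm'}\unit)\tensor K_{\fm}$ is an equivalence. The right-hand side is $\prod_{\fm'}(\Lambda_{\fm'}\unit \tensor K_{\fm})$, and for $\fm' \neq \fm$ the generators $K_{\fm}, K_{\fm'}$ have disjoint supports so $\Lambda_{\fm'}\unit \tensor K_{\fm} \simeq \unit \tensor K_{\fm}$ (completion at a prime not in the support of $K_\fm$ acts as the identity after smashing with $K_\fm$), while for $\fm' = \fm$ we get $\Lambda_{\fm}\unit \tensor K_{\fm}$; one checks using $\Lambda_{\fm}\unit \simeq \Hom(\Gamma_{\fm}\unit, \unit)$ that smashing with $K_{\fm}$ already sees the completed object, so $\Lambda_{\fm}\unit \tensor K_{\fm} \simeq \unit \tensor K_{\fm}$ as well. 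Hence the product identifies with $\prod_{\fm'}(\unit \tensor K_{\fm})$, and the induced diagonal map from $\unit \tensor K_{\fm}$ must be shown to be an equivalence; this is where the $1$-dimensional Noetherian hypothesis does the real work, since it guarantees that $\unit \tensor K_{\fm}$ is supported only at $\fm$ so that all but the $\fm$-factor contribute nothing and the product reduces to a single factor.

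The main obstacle I expect is precisely this last point: controlling the infinite product $\prod_{\fm}\Lambda_{\fm}\unit$ and its interaction with the localization $L_{\gen}$ and with tensoring by $K_{\fm}$. One has to know that after tensoring with a given $K_{\fm}$ only one factor of the product survives (up to equivalence), which uses that the closed points are pairwise "orthogonal" under smashing with their respective generators — a consequence of the Noetherian, one-dimensional structure via Lemma~\ref{lem:noeth} and the fact that distinct Balmer-minimal primes have closures meeting only in… nothing, since they are closed points. Once products are tamed, assembling the local equivalences into a global one is formal: a map in $\cCb$ inducing equivalences after $L_{\gen}$ and after $\tensor K_{\fm}$ for all $\fm$ is an equivalence, because the collection $\{L_{\gen}\unit\} \cup \{K_{\fm}\}$ generates, so its fibre $F$ is detected to be zero. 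This last detection statement is itself a small lemma one would extract from the arithmetic-fracture formalism of \cite{BalchinGreenlees} or prove directly from the support-theoretic description of the Balmer spectrum.
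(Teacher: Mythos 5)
Your overall architecture --- form the pullback $P$, produce a comparison map $\unit\to P$, and show its fibre $F$ vanishes by checking it after $L_{\gen}$ and after $\tensor K_{\fm}$ --- is sound, and once the fibre-checking is done right it amounts to the same central computation as the paper's proof. The paper instead takes horizontal fibres of the two rows and compares them directly, which is a slightly tidier packaging (the $L_{\gen}$-check you do disappears, since both rows are already cofibre sequences for the localization $L_{\gen}$), but both arguments reduce to showing that $\unit\to\prod_{\fm'}\Lambda_{\fm'}\unit$ is a $K_{\fm}$-equivalence for every closed point $\fm$. There are, however, two issues in how you carry out that reduction, one of which is a genuine error.

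The error is your claim that for $\fm'\neq\fm$ one has $\Lambda_{\fm'}\unit\tensor K_{\fm}\simeq\unit\tensor K_{\fm}$, with the justification that ``completion at a prime not in the support of $K_\fm$ acts as the identity after smashing with $K_\fm$''. This is false: the correct statement is $\Lambda_{\fm'}\unit\tensor K_{\fm}\simeq 0$ when $\fm'\neq\fm$. (Sanity check in $\sfD(\Z)$: $\Z_p^{\wedge}\tensor^{L}\Z/q\simeq\Z_p^{\wedge}/q\Z_p^{\wedge}\simeq 0$ for $q\neq p$, since $q$ is a unit in $\Z_p^{\wedge}$.) The correct mechanism is the one you already cite in the $\fm'=\fm$ case but then don't apply here: $\Lambda_{\fm'}\unit\simeq\Hom(\Gamma_{\fm'}\unit,\unit)$, so since $K_{\fm}$ is dualizable $\Lambda_{\fm'}\unit\tensor K_{\fm}\simeq\Hom(\Gamma_{\fm'}\unit,K_{\fm})$, and $\Gamma_{\fm'}\unit$ is built from $K_{\fm'}$'s while $\Hom(K_{\fm'},K_{\fm})\simeq DK_{\fm'}\tensor K_{\fm}$ has empty support because the closed points $\fm,\fm'$ are distinct. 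Your last few sentences (``only one factor survives'', ``the product reduces to a single factor'') are in direct tension with the erroneous claim two sentences earlier --- if every factor really were $\unit\tensor K_{\fm}$, an infinite diagonal into an infinite product of them could never be an equivalence --- so you seem to know the right answer but have the argument pointing the wrong way. With the correction, the product collapses cleanly: $\bigl(\prod_{\fm'}\Lambda_{\fm'}\unit\bigr)\tensor K_{\fm}\simeq\prod_{\fm'}\bigl(\Lambda_{\fm'}\unit\tensor K_{\fm}\bigr)\simeq\Lambda_{\fm}\unit\tensor K_{\fm}\simeq\unit\tensor K_{\fm}$, where the last step holds because $\unit\to\Lambda_{\fm}\unit$ is by definition a $K_{\fm}$-equivalence. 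The secondary issue is the unstated commutation of $\tensor K_{\fm}$ with the infinite product in the first of those displayed equivalences; that is exactly where dualizability of $K_{\fm}$ is needed (since $X\tensor K_{\fm}\simeq\Hom(DK_{\fm},X)$ and internal Hom out of a fixed object preserves products), and it is worth making explicit because the failure of the analogous step without dualizability is precisely why the ``super-separated'' diagram later in the paper is not a model.
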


\begin{proof}
One takes the horizontal fibre and sees that the map $\Gamma_\gen
\unit \to \Gamma_\gen \prod_\fm \Lambda_\fm \unit$ is a weak
equivalence since $\unit \to \prod_\fm \Lambda_\fm \unit$ is a
$K_{\fm}$ equivalence for all $\fm$. This is the one-dimensional version of the general result~\cite[Theorem 
8.1]{BalchinGreenlees} and a special case of the Tate square
\cite[Corollary 2.4]{Greenlees01b}. 
\end{proof}

\subsection{Diagrams of model categories}
\label{subsec:qce}
Diagrams of model categories are at the core of our method. The only diagram shape that we will need to consider is that of a cospan:
$$\alpha= \left( \begin{gathered}\xymatrix{
& \cC_1 \ar[d]^{F_1} \\ \cC_2 \ar[r]_{F_2} & \cC_3 
} \end{gathered}\right)  
$$
A generalized diagram over this cospan is displayed as
$$ \begin{gathered} \xymatrix{
& c_1 \ar@{..>}[d]^{f_1} \\ c_2 \ar@{..>}[r]_{f_2} & c_3 . 
} \end{gathered} 
$$
More explicitly, it is a quintuple 
$$(c_1,c_2,c_3,f_1 \colon F_1 c_1 \to c_3, f_2 \colon F_2c_2 \to c_3)$$
where $c_i \in \cC_i$. A morphism
$$(c_1,c_2,c_3,f_1 \colon F_1 c_1 \to c_3, f_2 \colon F_2c_2 \to c_3) \to (c_1',c_2',c_3',f_1' \colon F_1 c_1' \to c_3', f_2' \colon F_2c_2' \to c_3')$$
is given by morphisms $\alpha_i \colon c_i \to c_i'$ in $\cC_i$ for $i=0,1,2$ such that the squares in the following diagram commute:
\[
\xymatrix{
F_1 c_1  \ar[d]_{F_1 \alpha_1} \ar[r]^{f_1} & c_3 \ar[d]|{\alpha_3} & \ar[l]_{f_2} \ar[d]^{F_2 \alpha_2} F_2 c_2 \\
F_1 c_1'  \ar[r]_{f_1'} & c_3' & \ar[l]^{f_2'} F_2 c_2' 
}
\]

These  diagrams and morphisms between them form a category: we think of the categories
$\cC_i$ as fibres over the points of the diagram shape so that the 
(generalized) diagram is a {\em section}, and write 
$$\cCal = \bbGamma \left( \begin{gathered}\xymatrix{
& \cC_1 \ar[d]^{F_1} \\ \cC_2 \ar[r]_{F_2} & \cC_3 
} \end{gathered} \right)
$$
for the category of sections.

If we further assume that the categories $\cC_i$ come equipped with
model structures, and that the functors $F_1,F_2$ are left Quillen functors
then the category of sections admits the injective model
structure in which weak equivalences and cofibrations are objectwise ~\cite[Theorem
3.1]{GreenleesShipley14b}. The injective model structure
inherits many properties that all the $\cC_i$ have, such as being
proper and cellular. We note that this  injective model structure is 
the lax homotopy limit of the diagram in the sense of \cite{Bergner12}.

\subsection{Quasi-coherence and extendedness}\label{sec:qce}

In our situation, the functor $F_2$ appearing in the diagram category
will have the character of a localization, and  the vertical functor
$F_1$ will have the flavour of a faithfully flat extension.

\begin{defn}
Let $X = (c_1,c_2,c_3,f_1 \colon F_1 c_1 \to c_3, f_2 \colon F_2c_2
\to c_3)$  be an object of $\cCal$.
We say that $X$ is:
\begin{itemize}
    \item \emph{quasi-coherent ($\qc$)} 
 if the map $F_2c_2 \to c_3$ is an isomorphism  in $\cC_3$.
    \item \emph{extended ($\e$)}  if the map $F_1c_1 \to c_3$ is an isomorphism in $\cC_3$.
    \item if we only require weak equivalences in the injective model
      structure, we say that $X$  is \emph{weakly quasi-coherent
        ($\wqc$)} or \emph{weakly extended ($\we$)}. We write $\wqce$
      for the class of
      weakly quasi-coherent, weakly extended objects.
\end{itemize}
\end{defn}

We find it helpful to write 

$$\begin{gathered} \xymatrix{
& c_1 \ar@{..>}[d] \\ c_2 \ar@{..>}[r]|-{\text{{\Large $\bullet$}}} & c_3 
} \end{gathered} $$

to indicate a quasi-coherent object and 

$$\begin{gathered} \xymatrix{
& c_1 \ar@{..>}[d] \\ c_2 \ar@{..>}[r]|-{\text{{\Large $\circ$}}} & c_3 
} \end{gathered} $$

to indicate a weakly quasi-coherent object. Similar notation applies
for (weakly) extended and (weakly) quasi-coherent extended objects, as
well as for subcategories of such objects.

\subsection{Homotopical models}\label{intro:homotopical}
Having described the diagrammatic framework, we can outline how to lift
the decomposition on objects  to a categorical level. Full details
will be given in Sections \ref{sec:adelicm} and \ref{sec:adelics}. 

We assume that $\sfT$ is the homotopy category of some stable monoidal model category $\cC$
with 1-dimensional, irreducible, Noetherian Balmer spectrum, and
consider the diagram of rings in Display \ref{adeliccospan}. The
adelic category $\cCad$ consists of modules over this diagram of
rings. 
Our homotopical models will be formed from $\cCad$ by Bousfield
localization. For a model category  $\D$, we write $R_A\D$ for the cellularization with
respect to the thick tensor ideal generated by the set $A$ of objects of $\D$, and for an object $E$ of
$\D$ we write $L_E\D$ for the left  localization inverting $E$-homology isomorphisms. 

One of the main results of~\cite{BalchinGreenlees} states that  $\cC$ is Quillen equivalent to a model in which the
bifibrant objects are the weakly quasi-coherent extended ($\wqce$)
objects in $\cCad$. These are exactly the objects which are reconstructed as in Diagram~\ref{fracture1}.
More precisely,  we give $\cCad$ the diagram
injective model structure and then form the cellularization $R_\wqce
\cCad$  and obtain an equivalence $R_\wqce \cCad \simeq_Q \cC$.

To access the objects arising in the situation of
Diagram~\ref{fracture2} we begin instead by  describing a slightly
larger class $\wqcie \supseteq \wqce$ of objects that treats the different closed 
  points separately.  If we use the
  less extreme localization $R_\wqcie \cCad$ we do not get a model of
  $\cC$ since too many objects remain bifibrant.  To compensate for  this we can apply a mild  left localization $L_\Pi$  to ensure  that the fibrant objects have the object $M$ equivalent to a product of modules 
  over the factors $\Lambda_{\fm}\unit$. This leads to a
  model $L_\Pi R_\wqcie \cC \simeq_Q \cC$ which we call the {\em
    separated  model}. It is homotopical in the sense  that the ambient category still has all $\prod_\fm \Lambda_\fm
  \unit$-modules at the nub, and they are only required to be products up to
  homotopy.  

Less extreme still, we can form the even  larger class $\wqc \supseteq 
  \wqcie\supseteq \wqce$ of weakly quasi-coherent objects. We then form the
  cellularization 
$R_\wqc \cCad$ with 
  respect to $\wqc$. Once again we do not yet arrive at a model of $\cC$. However, we perform the same balancing act as above, and  apply a more extreme left 
  localization $L_\Lambda$ to obtain
  one. This localization is taken in such a way to force the fibrant objects to have the module $M$ to have the homotopy type not only a product of modules, but the product of complete modules. That is, such a model is the reconstruction with respect to Diagram~\ref{fracture3}. As such we call $L_\Lambda R_\wqc \cC \simeq_Q \cC$ the \emph{complete model}.
  
  To summarize, we will prove that there are Quillen equivalences
  $$\cC\simeq_Q R_\wqce \cCad \simeq_Q L_\Pi R_\wqcie \cCad \simeq_Q L_\Lambda
R_\wqc \cCad$$
 which provides us with three models whose underlying category is $\cCad$.

\subsection{Algebraic models}\label{intro:alg}
    When the categories in question are of an
  algebraic nature,  we may consider the  cellular skeleton, where the
  underlying category is pared  down to the absolute minimum so that
  the homotopy theory has the least amount of work to do. For example,
  we consider the full subcategory $\cCadqce \subseteq \cCad$ of
  objects whose base change maps are isomorphisms (as opposed to
  being weak equivalences). One can then prove that the restricted
  model structure on $\cCadqce$ induced from the diagram-injective
  model on $\cCad$ is Quillen equivalent to $R_\wqce \cCad$ and hence
  also to $\cC$. 
  
Similarly, the  skeleton of the separated model requires that
the nub is actually a product, and a suitably adapted version of the
vertical basechange map is an isomorphism.  For a  skeleton of
the complete model, we need the more stringent restriction to an 
 algebraic context where there is an abelian model for complete
 modules, but in that case we again obtain a skeleton. 
  
\subsection{Summary}

To summarize, in the most favourable situation we obtain three
different models. In this setting, an object $X
\in \operatorname{Ho}(\cC)$ can be described by each of the following
three sets of  data:
\begin{itemize}[align=left]
    \item[Adelic:]\leavevmode
        \begin{itemize}
            \item $V$ a $L_\gen \unit$-module;
            \item $N$ a  $\prod_\fm \Lambda_\fm \unit$-module;
            \item An equivalence $L_\gen N \simeq j_\ast V$ of  $L_\gen \prod_\fm \Lambda_\fm \unit$-modules.
        \end{itemize}
    \item[Separated:]\leavevmode
            \begin{itemize}
            \item $V$ a $L_\gen \unit$-module;
            \item $N_\fm$ an  $\Lambda_\fm \unit$-module for each $\fm$;
            \item A map of $ j_\ast V\lra L_\gen \prod_\fm N_\fm$ of  $L_\gen
              \prod_\fm \Lambda_\fm \unit$-modules so that for each
              idempotent $e_\fm$ it gives an equivalence  $e_\fm j_\ast V
              \xrightarrow{\simeq} e_\fm L_\gen N_\fm$.
        \end{itemize}
    \item[Complete:]\leavevmode
                \begin{itemize}
            \item $V$ a  $L_\gen \unit$-module;
            \item $N_\fm$ a complete $\Lambda_\fm \unit$-module for each $\fm$;
              \item A map $V \to L_\gen \prod_\fm N_\fm $ of  $L_\gen \prod_\fm \Lambda_\fm \unit$-modules.
        \end{itemize}
\end{itemize}
In \Cref{sec:abgrp} we make these explicit in the case of the derived category of a 1-dimensional
Noetherian domain, and in Section \ref{sec:qcsheaves} for quasi-coherent sheaves over a curve.

\part{Homotopical models}\label{part1}

In Part 1 we describe the homotopical adelic, separated and complete models, which
apply very generally in the 1-dimensional, Noetherian setting. 

\section{The adelic model}\label{sec:adelicm}

In this section we recall the adelic model
from~\cite{BalchinGreenlees} and prove that  this cellularization of the adelic
category is a realisation of  the strict homotopy limit.

\subsection{The adelic model as a cellularization}

We now have the necessary language to describe the adelic model.
We will freely use the technology of module categories in monoidal model categories from~\cite{SchwedeShipley00}. For $R$ a cofibrant monoid in a monoidal model category $\cC$ we denote by $R \text{-mod}_\cC$ the associated category of modules which we equip with the projective model structure.

\begin{defn}
Let $\cC$ be a 1-dimensional Noetherian model category. We 
define the adelic  category to be the category of 
sections of the adelic cospan: 
$$
\cCad :=  \bbGamma \left( \begin{gathered}\xymatrix{
& (L_\gen \unit) \modules_{\cC} \ar[d]^{j_\ast} \\ \left(  \prod_\fm \Lambda_\fm \unit \right) \modules_{\cC} \ar[r]_-{L_{\gen}} & \left(L_\gen  \prod_\fm \Lambda_\fm \unit \right)\modules_{\cC}
}  \end{gathered} \right)
$$
The functors in this diagram are extensions of scalars, and therefore
left adjoints. In other words, we take the category of modules over the diagram of rings $\unit_{ad}$ from Theorem~\ref{thm:AAT}.
\end{defn}

The categories are related by an adjoint pair 
\[
\xymatrix{\cC \ar@<.75ex>[r]^{\aaa} \ar@<-.75ex>@{<-}[r]_{\pb}  \ar@{}[r] & \cCad}
\]
where $\aaa$ is defined via pointwise tensoring with the diagram $\unit_{ad}$, and $\pb$ is the pullback of the diagram when viewed in $\cC$.

We equip the individual module categories with the module projective
model structures, and $\cCad$ with the diagram injective model
structure.  With these model structures the above adjoint pair is a Quillen pair. One checks that on homotopy categories
$\aaa$ preserves small objects, and that the derived unit is a weak
equivalence as a consequence of the 
Adelic Approximation Theorem \ref{thm:AAT} which tells us that $\pb(a(\unit)) = \pb(\unit_{ad})\simeq \unit$. 

Recall that we have assumed that $\cCb$ has a set $\cG$ of small generators. 
We shall write $\cGad$ for the image of the
generators under $\aaa$. We now apply the cellularization principle \cite{GreenleesShipley13}
 to obtain the basic adelic model. 

\begin{thm}\cite[Theorem 9.3]{BalchinGreenlees}
\label{equiv1}
Let $\cC$ be a 1-dimensional Noetherian model category. The above
adjunction is a Quillen equivalence
$$ \cC \simeq_Q \cell_{\cGad} \cCad$$
between $\cC$ and the cellularization of $\cC_{ad}$ at the image of the generators.
\end{thm}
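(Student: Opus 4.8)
The plan is to invoke the cellularization principle of \cite{GreenleesShipley13} applied to the Quillen pair $(\aaa, \pb)\colon \cC \rightleftarrows \cCad$. Recall the content of that principle: given a Quillen adjunction between stable model categories and a set $\cG$ of compact generators of $\operatorname{Ho}(\cC)$ such that (i) the total left derived functor $\mathbf{L}\aaa$ preserves compactness, i.e.\ sends each $g \in \cG$ to a compact object of $\operatorname{Ho}(\cCad)$, and (ii) for each $g \in \cG$ the derived unit map $g \to \pb\aaa(g)$ is a weak equivalence, then $\aaa$ and $\pb$ induce a Quillen equivalence between $\cC$ (already cellular at $\cG$, since $\cG$ generates) and the cellularization $\cell_{\cGad}\cCad$ at the image $\cGad = \mathbf{L}\aaa(\cG)$. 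So the whole proof reduces to verifying hypotheses (i) and (ii), the rest being a direct citation.

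First I would record that the adjunction is indeed a Quillen pair: each leg of the adelic cospan is an extension of scalars along a map of (cofibrant) monoids, hence a left Quillen functor on the projective module structures, so $\aaa$, built by pointwise tensoring with $\unit_{ad}$, is left Quillen into the diagram-injective structure on $\cCad$ (cofibrations and weak equivalences there are objectwise, and extension of scalars is left Quillen objectwise); $\pb$, the pullback of the cospan back in $\cC$, is its right adjoint and is right Quillen because the injective model structure has objectwise fibrations and fibrant objects are fibrewise-fibrant diagrams whose pullback in the proper model category $\cC$ is a homotopy pullback. For (ii), the derived unit on the unit object $\unit$ is $\pb(\aaa(\unit)) = \pb(\unit_{ad})$, and this is weakly equivalent to $\unit$ \emph{exactly} by the Adelic Approximation Theorem~\ref{thm:AAT}; since $\aaa$ is built from smashing constructions ($L_\gen$ is smashing, $\Lambda_\fm$ and $\prod_\fm$ commute with $-\tensor g$ up to equivalence for $g$ dualizable/small) one propagates this from $\unit$ to every $g \in \cG$ by tensoring the homotopy-pullback square of Theorem~\ref{thm:AAT} with $g$ — this is the passage already displayed as Diagram~\ref{fracture1}. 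For (i), $\mathbf{L}\aaa(g)$ has components $L_\gen \unit \tensor g$, $\prod_\fm \Lambda_\fm \unit \tensor g$ and $L_\gen\prod_\fm \Lambda_\fm \unit \tensor g$; since $g$ is dualizable these are retracts-of-finite-colimits built from the structure rings in the relevant module categories, hence compact in each $\operatorname{Ho}(\cC_i)$, and a section diagram that is objectwise compact is compact in the diagram-injective homotopy category — so $\cGad \subseteq \operatorname{Ho}(\cCad)^\omega$.

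The main obstacle is the careful bookkeeping in hypothesis (ii): one must check that tensoring the Hasse square of Theorem~\ref{thm:AAT} with an arbitrary small generator $g$ still yields a homotopy pullback whose pullback-corner is $g$, and equally that $\pb$ computes this homotopy pullback correctly on the \emph{cofibrant-fibrant} replacement of $\aaa(g)$ in $\cCad$ rather than on the strict diagram $\aaa(g)$ itself. This needs the observations that $\Lambda_\fm$ and $L_\gen$ are monoidal localizations (so $\Lambda_\fm(g) \simeq \Lambda_\fm\unit \tensor g$ when $g$ is dualizable, using \cite[\S5,6]{BalchinGreenlees}), that finite products appearing in $\prod_\fm$ behave well (or that the argument is first run one prime at a time), and that properness of $\cC$ makes the pullback of a fibrant section a genuine homotopy pullback. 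Once these are in place the cellularization principle applies verbatim and delivers $\cC \simeq_Q \cell_{\cGad}\cCad$. I would keep the exposition short since this theorem is quoted from \cite[Theorem 9.3]{BalchinGreenlees}; the point of reproving it here is only to set up the strict-homotopy-limit comparison that follows, so I would emphasize conditions (i) and (ii) and refer to \cite{GreenleesShipley13, BalchinGreenlees} for the details of each.
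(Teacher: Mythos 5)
Your proposal follows the same route the paper takes: establish the Quillen pair $(\aaa,\pb)$ with projective/injective model structures, observe that $\aaa$ preserves small objects, deduce the derived unit equivalence on generators from the Adelic Approximation Theorem (by tensoring the homotopy pullback square for $\unit$ with $g$), and then invoke the Cellularization Principle of \cite{GreenleesShipley13}. The paper records exactly these three ingredients in the paragraph preceding the theorem statement (cf.\ also \cite[Theorem 9.3]{BalchinGreenlees}), so your argument is correct and essentially identical in strategy, merely supplying more detail on each step.
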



\subsection{The adelic model as a strict homotopy limit}\label{subsec:cellstrictholim}

The cellularization appearing in Theorem~\ref{equiv1} has an
attractive universal property: it coincides with the strict
homotopy limit as defined by Bergner and
Barwick~\cite{Barwick10,Bergner12} which we will denote
$\mathcal{L}\mathrm{im}(\cCad)$ (\cite[9.3]{BalchinGreenlees})).  
We will show that the subcategory of bifibrant objects in 
$\cell_{\cGad} \cCad$ consists of $\wqce$-objects which are
object-wise cofibrant. We can describe this subcategory as
$$
\cCad^\wqce = \bbGamma \left( \begin{gathered}\xymatrix{
& (L_\gen \unit) \modules_\cC \ar[d]|{\text{\Large $\circ$}} \\ \left(  \prod_\fm \Lambda_\fm \unit \right) \modules_\cC \ar[r]|-{\text{\Large $\circ$}} & \left(L_\gen  \prod_\fm \Lambda_\fm \unit \right) \modules_\cC
}  \end{gathered} \right)
$$
where we have used the $\circ$ notation as in \Cref{sec:qce}, and we
have supressed notation to indicate that we are requiring diagrams
to be object-wise cofibrant. 

We now introduce some special objects of $\cCad^{\wqce}$ by considering the
right adjoints to evaluation at a prime. First we consider the generic prime $\gen$. 
 For an arbitrary $L_{\gen} \unit$-module $W$, define the object 
 $$e(W) = \left[ \begin{gathered} \xymatrix{
& W \ar@{..>}[d]|-{\text{\Large $\circ$}} \\ j_\ast W \ar@{..>}[r]|-{\text{\Large $\circ$}} & j_\ast W 
} \end{gathered} \right] \in \cCad^{\wqce}.$$
In the bottom left hand corner notation for restriction of scalars along the ring map $\prod_\fm \unit \to L_\gen \prod \Lambda_\fm \unit$ has been omitted.

An $L_{\gen}$-module $W$ can be viewed  as an object of $\cC$ via the
forgetful functor to which we can apply the functor $a$, giving 
a functor from $L_{\gen}$-modules to $\cCad^{\wqce}$.

\begin{lemma}
The object $e(W)$ is equivalent to an object coming from $\cC$ in that
it is in the image of $a$. Indeed, $e(W)$ is 
the image of $W$ itself viewed as a $\unit$-module in $\cC$. The functor $e \colon (L_{\gen}\unit) \modules_\cC \to \cCad$ is right adjoint to evaluation at $\gen$ (i.e., at the vertex) on homotopy category of $\cCad$. 
\end{lemma}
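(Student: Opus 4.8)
The plan is to prove the three assertions in turn: that $e(W)$ lies in the image of $a$, that it is in fact $a$ applied to $W$ regarded as a $\unit$-module, and that $e$ is right adjoint to evaluation at $\gen$ on homotopy categories. The first two are really the same computation. Recall that $a$ is pointwise tensoring with the diagram $\unit_{ad}$, so for a $\unit$-module $Z$ in $\cC$ we have
$$a(Z) = \left[ \begin{gathered} \xymatrix{
& (L_\gen \unit) \tensor Z \ar@{..>}[d] \\ (\textstyle\prod_\fm \Lambda_\fm \unit) \tensor Z \ar@{..>}[r] & (L_\gen \textstyle\prod_\fm \Lambda_\fm \unit) \tensor Z
} \end{gathered} \right].$$
Now I would take $Z$ to be (the underlying $\unit$-module of) $W$, which is already an $L_\gen\unit$-module. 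Since $L_\gen$ is smashing, $L_\gen \unit \tensor W \simeq L_\gen W \simeq W$ (the last because $W$ is already $L_\gen$-local), so the vertex of $a(W)$ is equivalent to $W$; similarly the bottom-right corner is $L_\gen\prod_\fm\Lambda_\fm\unit \tensor W \simeq j_\ast W$, and the bottom-left corner $\prod_\fm\Lambda_\fm\unit \tensor W$ receives a map from $\prod_\fm\Lambda_\fm\unit \tensor L_\gen \unit \tensor W$ which, after localizing at $\gen$, already agrees with $j_\ast W$ — this is exactly what it means for $e(W)$ to be the image of the cospan corner. The point to check carefully is that the structure maps of $a(W)$ match those of $e(W)$ under these equivalences; this is a matter of unwinding that both the horizontal and vertical maps of $e(W)$ are built from the unit of the (smashing) localization $\unit \to L_\gen\unit$ and the extension-of-scalars comparison maps, which are precisely the structure maps of $a$ applied to a ring-like object. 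I would phrase this as: the canonical map $a(W) \to e(W)$ (induced by the module structure of $W$ over itself) is an objectwise weak equivalence, hence an equivalence in $\cCad^\wqce$.

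For the adjunction, I would argue directly on homotopy categories, or better, identify a Quillen adjunction and pass to derived functors. Evaluation at $\gen$, call it $\mathrm{ev}_\gen \colon \cCad \to (L_\gen\unit)\modules_\cC$, sends $(c_1,c_2,c_3,\ldots)$ to $c_1$; this is plainly a left Quillen functor for the diagram-injective structure since cofibrations and weak equivalences in $\cCad$ are objectwise and $\mathrm{ev}_\gen$ just reads off the first coordinate. A right adjoint to $\mathrm{ev}_\gen$ at the level of section categories sends $W$ to the terminal way of extending $W$ along the cospan, namely $\bigl(W,\, j_\ast W \text{ pulled back},\, j_\ast W,\, \mathrm{id},\, \mathrm{id}\bigr)$ — and that is exactly $e(W)$ with the $\circ$'s replaced by genuine isomorphisms before we pass to homotopy. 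So $e$ is the point-set right adjoint of $\mathrm{ev}_\gen$, and since $\mathrm{ev}_\gen$ is left Quillen, $e$ is right Quillen; taking derived functors gives the claimed adjunction $\mathrm{ev}_\gen \dashv \mathbb{R}e$ on homotopy categories, and $\mathbb{R}e$ agrees with $e$ as written because the cospan maps in the image are objectwise (co)fibrant representatives.

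The main obstacle, and the step I would spend the most care on, is the first one: verifying that the equivalence $a(W) \simeq e(W)$ is natural and compatible with structure maps, in particular that after inverting $\gen$-equivalences the comparison map $L_\gen\bigl((\prod_\fm\Lambda_\fm\unit)\tensor W\bigr) \to (L_\gen\prod_\fm\Lambda_\fm\unit)\tensor W$ is an equivalence — this uses that $L_\gen$ is smashing, which is available by the discussion in Section~\ref{subsec:qce}. The adjunction statement is then essentially formal once one observes that $e$ is literally the point-set right adjoint to the evaluation functor restricted appropriately, and that evaluation at $\gen$ is homotopical (preserves weak equivalences) so its total right derived functor is computed by applying $e$ to a fibrant-cofibrant replacement, which for the objects in the image is the given formula up to objectwise weak equivalence. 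A minor bookkeeping point is that the target of $e$ should be taken to be $\cCad^\wqce$ (or $\cCad$ with its cellularized structure) rather than all of $\cCad$, so that the adjunction is with the correct homotopy category; this is handled by noting $e(W)$ is manifestly $\wqce$ and objectwise cofibrant when $W$ is cofibrant.
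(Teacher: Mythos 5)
The first two assertions (that $e(W)\simeq a(W)$, and hence that $e(W)$ lies in the image of $a$) are handled essentially the same way as in the paper: compute $a(W)$ corner by corner, use that $W$ is an $L_\gen\unit$-module and that $L_\gen$ is smashing to identify the vertex with $W$, the splice with $j_\ast W$, and then observe that the bottom-left corner $\prod_\fm\Lambda_\fm\unit\tensor W$ is already $L_\gen$-local (since $W\simeq L_\gen\unit\tensor W$), so the horizontal localization map is an equivalence and the nub is also $\simeq j_\ast W$. Your phrasing of this last step is somewhat indirect, but the underlying argument is the paper's.

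The adjunction part, however, has a genuine gap. You claim that $e$ is \emph{literally the point-set right adjoint} to $\mathrm{ev}_\gen$ on the section category $\cCad$, and deduce the homotopy-level adjunction from a Quillen adjunction. This is false: the point-set right adjoint to evaluation at the vertex sends $W$ to the section with $W$ at the vertex and \emph{terminal} (hence, in the pointed setting, zero) objects at the nub and splice, not to $e(W)$. To see the issue, take a section $X=(0,N,0)$ with $N$ torsion; then $\Hom(\mathrm{ev}_\gen X, W)=0$, but $\Hom_{\cCad}(X,e(W))$ consists of those maps $N\to j_\ast W$ whose localization vanishes, which is typically nonzero. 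The desired adjunction only holds after restricting to the subcategory of weakly quasi-coherent extended objects (equivalently, on the homotopy category of the cellularization $\cell_{\cGad}\cCad$): for $X$ with base-change maps weak equivalences, a map $X\to e(W)$ is determined (up to the ambient homotopy) by its vertex component, precisely because the splice and nub of $X$ are recovered from $V$ via $j_\ast$ and $L_\gen$. This is what the paper's displayed Hom-set computation is asserting, and it cannot be obtained by deriving a global point-set adjunction. Your closing remark that the target ``should be taken to be $\cCad^{\wqce}$'' gestures at this, but does not repair the argument: once you restrict to $\cCad^{\wqce}$ the functor $\mathrm{ev}_\gen$ no longer has a point-set left-adjoint/right-adjoint pair at the level of model categories to which you can apply the Quillen machinery, and you must instead argue directly on derived Hom-sets as the paper does.
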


\begin{proof}
By definition
$$
a(W) =\left[  \begin{gathered} \xymatrix{
& L_{\gen} \unit \otimes W \ar@{..>}[d]|-{\text{\Large $\circ$}} \\
\prod_{\fm}\Lambda_{\fm} \unit \tensor W  
\ar@{..>}[r]|-{\text{\Large $\circ$}} &  L_{\gen}\prod_{\fm}\Lambda_{\fm} \unit \tensor W  
} \end{gathered} \right]
$$
Since $W$ is a $L_\gen\unit$-module we have $L_{\gen} \unit \otimes W \simeq W$, and by definition
the right hand vertical functor is extension of scalars. Since the bottom
left entry is $L_{\gen}$-local, and the lower horizontal functor is
$L_{\gen}$-localization it follows that the bottom left entry is also
equivalent to $j_*W$. Hence $a(W)\simeq e(W)$ as required.

For the adjoint property we see 
$$\left[ 
\left[  \begin{gathered} \xymatrix{
& V \ar@{..>}[d]|-{\text{\Large $\circ$}} \\ N \ar@{..>}[r]|-{\text{\Large $\circ$}} & P 
} \end{gathered} \right]
 ,  \left[ \begin{gathered} \xymatrix{
& W \ar@{..>}[d]|-{\text{\Large $\circ$}} \\ j_\ast W \ar@{..>}[r]|-{\text{\Large $\circ$}} & j_\ast W 
} \end{gathered} \right] 
\right] = [V,W].$$
\end{proof}

When it comes to minimal primes, there is no right adjoint on the
whole subcategory. However, if  $N$ is a $ \left(  \prod_\fm \Lambda_\fm
  \unit \right)$-module which is torsion in the sense  that $L_{\gen}N
\simeq 0$, then we may take 
 $$f(N) = \left[ \begin{gathered} \xymatrix{
& 0 \ar@{..>}[d]|-{\text{\Large $\circ$}} \\ N \ar@{..>}[r]|-{\text{\Large $\circ$}} & 0
} \end{gathered} \right] \in \cCad^{\wqce}$$

\begin{lemma}
When $L_{\gen}N\simeq 0$, the object $f(N)$ is equivalent to an object
in the image of $a$. Indeed, $f(N)$ is 
the image of $N$ itself viewed as a $\unit$-module.  The object $f(-)$ has the property
 $\Hom(f(M),f(N)) = \Hom(M,N)$ and $[f(M), f(N)]=[M,N]$.
\end{lemma}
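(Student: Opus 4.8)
The claim has three parts: (i) $f(N)$ lies in the image of $a$ up to equivalence, and in fact $f(N)\simeq a(N)$ where $N$ is regarded as a $\unit$-module via restriction of scalars; (ii) $\Hom(f(M),f(N)) = \Hom(M,N)$; and (iii) $[f(M),f(N)] = [M,N]$. I would treat (i) first, then derive (iii) from (ii), and prove (ii) by the same cospan-of-modules bookkeeping as in the preceding lemma.

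For part (i), the plan is to compute $a(N)$ directly from the definition. Writing it out,
$$a(N) = \left[ \begin{gathered} \xymatrix{
& L_\gen\unit \tensor N \ar@{..>}[d]|-{\text{\Large $\circ$}} \\
\prod_\fm\Lambda_\fm\unit\tensor N \ar@{..>}[r]|-{\text{\Large $\circ$}} & L_\gen\prod_\fm\Lambda_\fm\unit\tensor N
} \end{gathered}\right].$$
Here $N$ is a $\prod_\fm\Lambda_\fm\unit$-module, so the module action gives $\prod_\fm\Lambda_\fm\unit\tensor N \simeq N$ in the nub (using that tensoring over $\unit$ followed by the action map is an equivalence, as in the prior lemma's argument for $L_\gen\unit\tensor W\simeq W$). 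The hypothesis $L_\gen N\simeq 0$ then forces the vertex $L_\gen\unit\tensor N = L_\gen N \simeq 0$ and likewise the bottom-right corner $L_\gen\prod_\fm\Lambda_\fm\unit\tensor N \simeq L_\gen N \simeq 0$, since $L_\gen$ is smashing and $L_\gen$ applied to $N$ vanishes. Hence $a(N)$ is equivalent to the section with $0$ at the vertex, $N$ at the nub, $0$ at the corner — which is exactly $f(N)$. This also shows $f(N)$ is the image under $a$ of the $\unit$-module $N$, giving the ``Indeed'' clause.

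For part (ii), I would compute the mapping object in $\cCad$ out of $f(M)$. A morphism in $\cCad$ from $f(M)$ to $f(N)$ is a triple of maps on the three vertices commuting with the structure maps; since the vertex and corner of both $f(M)$ and $f(N)$ are $0$, the only data is a map $M\to N$ of $\prod_\fm\Lambda_\fm\unit$-modules, with no compatibility constraint (the squares involving the zero objects commute automatically). Thus $\Hom_{\cCad}(f(M),f(N)) = \Hom_{\prod_\fm\Lambda_\fm\unit\modules}(M,N)$, and one should check this is compatible with the (co)fibrancy hypotheses so that it computes the derived mapping object; since the vertex/corner entries are $0$ this is unproblematic. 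Part (iii) then follows by passing to homotopy classes, i.e. applying $\pi_0$ (or $H_0$): $[f(M),f(N)] = \pi_0\Hom(f(M),f(N)) = \pi_0\Hom(M,N) = [M,N]$.

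The main obstacle — really the only nontrivial point — is verifying in part (i) that the torsion hypothesis $L_\gen N\simeq 0$ genuinely kills both the vertex $L_\gen\unit\tensor N$ and the corner $L_\gen\prod_\fm\Lambda_\fm\unit\tensor N$ of $a(N)$, i.e. that these are computed correctly up to equivalence given that $a(N)$ is only guaranteed objectwise cofibrant and the corner involves a composite of derived functors. One resolves this by noting $L_\gen$ is smashing, so $L_\gen(-)\simeq L_\gen\unit\tensor(-)$, and $L_\gen\unit\tensor\prod_\fm\Lambda_\fm\unit\tensor N \simeq L_\gen\unit\tensor N\simeq 0$; the module-action equivalence in the nub is the same computation used in the previous lemma. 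Everything else is the routine section-category bookkeeping already exemplified above.
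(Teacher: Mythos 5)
Your proposal is correct and follows essentially the same route as the paper: compute $a(N)$ entrywise, observe that the torsion hypothesis $L_{\gen}N\simeq 0$ kills the vertex and the corner (using that $L_{\gen}$ is smashing), and use the module-action equivalence $\prod_{\fm}\Lambda_{\fm}\unit\tensor N\simeq N$ at the nub to identify $a(N)$ with $f(N)$. Your treatment of the mapping-object statement is likewise the same observation the paper makes, that since $f(M),f(N)$ are concentrated at the nub, a section-category morphism between them is exactly a map of $\prod_{\fm}\Lambda_{\fm}\unit$-modules.
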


\begin{proof}
We may calculate
$$
a(N) =\left[  \begin{gathered} \xymatrix{
& L_{\gen} N \ar@{..>}[d]|-{\text{\Large $\circ$}} \\
\prod_{\fm}\Lambda_{\fm}\unit\tensor N
\ar@{..>}[r]|-{\text{\Large $\circ$}} &  j_*L_{\gen}N 
} \end{gathered} \right]
$$
Since $L_{\gen}N\simeq 0$, we have that $a(N)\simeq
f(\prod_{\fm}\Lambda_{\fm}\unit\tensor N)$. As $\prod_{\fm}\Lambda_{\fm}\unit\tensor N \simeq N$ the result follows as the object is weakly extended. 


The statement about maps between $f(M)$ and $f(N)$ is clear due to the fact that the diagrams are concentrated at a single vertex. 
\end{proof}

\begin{remark}\label{rem:supported}
These $f(-)$ and $e(-)$ objects are very useful since they correspond
to two layers given by the dimension filtration. That is, the objects $e(-)$ are ones
supported on the prime $\gen$, while the $f(-)$ are supported on the
closed points $\fm$. In particular, these objects  build
all $\wqce$ objects. Indeed, for an arbitrary object
$$
X = \left[ \begin{gathered} \xymatrix{
& V \ar@{..>}[d]|-{\text{\Large $\circ$}} \\ N \ar@{..>}[r]|-{\text{\Large $\circ$}} & P
} \end{gathered} \right] \in \cCad^{\wqce}
$$
there is a natural map $X\lra e(V)$ corresponding to the identity on
$V$ and this gives a  cofibre sequence
\begin{align}
\label{cof:fe}
f(N') \to X \to e(V), 
\end{align}
where $N'=\fibre(N\lra L_{\gen}N)$. 
\end{remark}

\begin{thm}\label{equiv2} \cite[9.C]{BalchinGreenlees}
Every  object of $\cell_{\cGad} \cCad$ is equivalent to an
object of $\cCad^{\wqce}$ and hence 
$$\cC\simeq_Q \cell_{\cGad} \cCad\simeq_Q \mathcal{L}\mathrm{im} (\cCad).$$
\end{thm}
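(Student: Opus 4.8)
The plan is to work inside the homotopy category of $\cCad$ (with its injective model structure) and to identify the essential image of $\operatorname{Ho}(\cell_{\cGad}\cCad)$ there, on the one hand with the localizing subcategory $\mathrm{Loc}(\cGad)$ generated by the cells, and on the other hand with the essential image of the subcategory $\cCad^\wqce$. Granting this, the first assertion is immediate and the two Quillen equivalences drop out.

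First I would check that every cofibrant object of $\cell_{\cGad}\cCad$ lies in $\mathrm{Loc}(\cGad)$: such an object is $\cGad$-cellular, hence built from $\cGad$ by iterated homotopy colimits. Since every object of $\cell_{\cGad}\cCad$ is weakly equivalent to a cofibrant one, it then suffices to show $\mathrm{Loc}(\cGad)\subseteq\cCad^\wqce$. For this, note that each $a(g)$ is quasi-coherent and extended: $L_\gen$ is smashing, so $L_\gen(\prod_\fm\Lambda_\fm\unit\tensor g)\simeq L_\gen\prod_\fm\Lambda_\fm\unit\tensor g$, and $j_\ast$ is extension of scalars along $L_\gen\unit\to L_\gen\prod_\fm\Lambda_\fm\unit$, so $j_\ast(L_\gen\unit\tensor g)\simeq L_\gen\prod_\fm\Lambda_\fm\unit\tensor g$; hence $\cGad\subseteq\cCad^\wqce$. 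Moreover the essential image of $\cCad^\wqce$ in $\operatorname{Ho}(\cCad)$ is a localizing subcategory: it is closed under retracts and shifts, and because $j_\ast$ and $L_\gen$ are left Quillen they preserve coproducts and homotopy cofibre sequences, so the conditions that $j_\ast c_1\to c_3$ and $L_\gen c_2\to c_3$ be weak equivalences are inherited by coproducts and by the third term of a triangle. This gives $\mathrm{Loc}(\cGad)\subseteq\cCad^\wqce$, which already proves the first assertion.

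Next I would prove the reverse inclusion $\cCad^\wqce\subseteq\mathrm{Loc}(\cGad)$. Given an object $X$ of $\cCad^\wqce$ with vertex $V$ and nub $N$, apply the cofibre sequence $f(N')\to X\to e(V)$ of \Cref{rem:supported}, where $N'=\fibre(N\to L_\gen N)$. Since $L_\gen$ is smashing, $L_\gen N'\simeq 0$, so the lemmas on $e(-)$ and $f(-)$ identify $e(V)$ and $f(N')$ with the images under $a$ of $V$ and $N'$ regarded (by restriction of scalars) as objects of $\cC$. As $\cG$ generates $\operatorname{Ho}(\cC)$, every object of $\operatorname{Ho}(\cC)$ lies in $\mathrm{Loc}(\cG)$, and since $a$ preserves coproducts and triangles the essential image of $a$ lies in $\mathrm{Loc}(\cGad)$; the triangle then forces $X\in\mathrm{Loc}(\cGad)$. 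Combining the inclusions, $\mathrm{Loc}(\cGad)$ equals the essential image of $\cCad^\wqce$, and since the objects of $\cGad$ are small in $\operatorname{Ho}(\cCad)$ the cellularization principle of~\cite{GreenleesShipley13} identifies $\operatorname{Ho}(\cell_{\cGad}\cCad)$ with this subcategory; in particular its bifibrant objects are exactly the objectwise cofibrant $\wqce$-objects.

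For the conclusion, $\cC\simeq_Q\cell_{\cGad}\cCad$ is \Cref{equiv1}. For $\cell_{\cGad}\cCad\simeq_Q\mathcal{L}\mathrm{im}(\cCad)$, recall that the strict homotopy limit of the cospan in the sense of Barwick and Bergner~\cite{Barwick10,Bergner12} is, by construction, a cellularization of the injective (lax homotopy limit) model structure on $\cCad$; its bifibrant objects are the homotopy-cartesian sections, and for a cospan these are precisely the sections with both structure maps weak equivalences, i.e.\ the objectwise cofibrant $\wqce$-objects. By the previous step that subcategory is $\mathrm{Loc}(\cGad)$, generated by the small set $\cGad$, so this cellularization agrees up to Quillen equivalence with $\cell_{\cGad}\cCad$; see~\cite[\S 9]{BalchinGreenlees}. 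The main obstacle is the reverse inclusion $\cCad^\wqce\subseteq\mathrm{Loc}(\cGad)$ — that an arbitrary weakly quasi-coherent extended section is reassembled from the $e(-)$ and $f(-)$ pieces coming from $\cC$ — since this both closes the loop and licenses the comparison with the abstract homotopy limit; the closure properties of $\cCad^\wqce$ and the verification that $a(g)$ is quasi-coherent and extended are routine.
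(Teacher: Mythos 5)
Your proof is correct and takes essentially the same approach as the paper: both hinge on the cofibre sequence $f(N')\to X\to e(V)$ of Remark~\ref{rem:supported} together with the lemmas identifying $e(V)\simeq a(V)$ and $f(N')\simeq a(N')$ (using $L_\gen N'\simeq 0$ because $L_\gen$ is smashing), to conclude that $\cGad$ and the $\wqce$ objects generate the same localizing subcategory of $\operatorname{Ho}(\cCad)$, whence the two cellularizations agree. The only packaging difference is that you establish the key inclusion positively, showing each $\wqce$ object is built from $\cGad$, whereas the paper argues by orthogonality, showing a $\wqce$ object with $[\aaa(g),X]_*=0$ for all $g\in\cG$ must vanish.
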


\begin{proof}
By \cite[3.2]{Bergner12} the strict homotopy limit is the
cellularization of the diagram category with respect to
$\wqce$. We have right Bousfield localized with respect to  the apparently smaller subcategory of objects coming from $\cC$, so it suffices to show they
generate the same subcategory. 

Suppose then that $X \in \cCad^{\wqce}$, and use the notation above for
its values. We must show that $X$ is trivial
in $\cCad$ if and only if it is cellularly trivial. It is obvious that
if $X$ is trivial then it is cellularly trivial. Suppose then that $X$ is
cellularly trivial in the sense that $[\aaa(g),X]_\ast = 0$ for all
$g\in \cG$. It follows that $[\aaa(A),X]_\ast=0$ for all $A\in \cC$.

Now consider the cofibre sequence (\ref{cof:fe}). Since $[f(N'),
e(V)]_*=0$ we see that 
$$[N',N']_*=[f(N'), f(N')]_*=[f(N'), X]_*=0$$
so $N'\simeq 0$ and it follows that $X\simeq e(V)$. But now
$$[V, V]_*=[e(V), e(V)]_*=[e(V), X]_*=0$$
so that $V\simeq 0$. Hence $X\simeq 0$ as required.

\end{proof}

\begin{remark}\label{rem:cGad}
\Cref{equiv2} shows that $\cGad$ generates the localizing subcategory
$\wqce$. Since $\cGad$ is a set of small objects,  it shows that
cellularization with respect to $\wqce$ exists and $R_{\wqce}\cCad
=R_{\cGad}\cCad$.  To avoid referring to any particular generators, we
henceforth write $R_{\wqce} \cCad$ for the adelic model of $\cC$. 
\end{remark}


\section{The separated and complete models}\label{sec:adelics}

In this section we construct  both the separated and complete models from
$\cCad$ by taking  a left localization of a cellularization of
$\cCad$.  We will introduce the relevant localizations and then  prove the following theorem.

\begin{thm} {\em (Adelic, separated and complete models)}
\label{thm:adsepcomp}
Let $\cC$ be a 1-dimensional Noetherian model category. Then there are Quillen equivalences 
$$ \cC \simeq_Q R_{\wqce} \cC_{ad} \simeq_Q L_{\Pi} R_{\wqcie} \cC_{ad} \simeq_Q L_{\Lambda}R_{\wqc} \cC_{ad}.$$
\end{thm}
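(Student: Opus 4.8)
`The plan is to establish the chain of Quillen equivalences one link at a time, reusing the adelic model $\cC\simeq_Q R_{\wqce}\cCad$ from \Cref{equiv2} and \Cref{rem:cGad} as the base case. The strategy for each of the two remaining equivalences is the same: compare a cellularization--then--left-localization of $\cCad$ with the already-known adelic model $R_{\wqce}\cCad$, using the fact that all three live on the same underlying category $\cCad$. So the real content is two comparison statements: $R_{\wqce}\cCad\simeq_Q L_{\Pi}R_{\wqcie}\cCad$ and $R_{\wqce}\cCad\simeq_Q L_{\Lambda}R_{\wqc}\cCad$.

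First I would pin down the three classes of objects and the two localizing objects. Since $\wqce\subseteq\wqcie\subseteq\wqc$ as localizing subcategories, the identity functor gives right Quillen functors $R_{\wqce}\cCad\to R_{\wqcie}\cCad\to R_{\wqc}\cCad$ (more cellular objects means fewer; cellularization with respect to a larger class is a further right Bousfield localization). Dually, a left Bousfield localization $L_E\D\to\D$ is a left Quillen functor from the localization. So for each middle model I would exhibit the composite adjunction $R_{\wqce}\cCad \rightleftarrows L_{E}R_{F}\cCad$ (with $(F,E)=(\wqcie,\Pi)$ or $(\wqc,\Lambda)$) and check it is a Quillen equivalence by the usual criterion: a cofibrant object $X$ of $R_{\wqce}\cCad$ has derived unit $X\to$ (fibrant replacement of $X$ in $L_ER_F\cCad$) a weak equivalence, and conversely a fibrant object of $L_ER_F\cCad$ is detected correctly. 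The key point to isolate is that the bifibrant objects of $L_{\Pi}R_{\wqcie}\cCad$ are exactly the $\wqcie$-cellular objects whose nub $N$ is $\Pi$-local, i.e.\ (using the cofibre sequence~\eqref{cof:fe} and the $e(-),f(-)$ building blocks of \Cref{rem:supported}) the objects for which $N\simeq\prod_\fm\Lambda_\fm\unit\tensor_{\prod_\fm\Lambda_\fm\unit}N$ reduces to a product of its idempotent pieces; and that on this subcategory the weakly-quasi-coherent-extended condition is automatically recovered, so the inclusion into $R_{\wqce}\cCad$ induces an equivalence of homotopy categories. The complete case is analogous with $\Lambda$ forcing each idempotent piece to be homotopically $K_\fm$-complete, and one uses the equivalence $\Lambda_\fm X\simeq\Hom(\Gamma_\fm\unit,X)$ recalled after \Cref{thm:AAT}.

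Concretely the steps are: (1) define $\wqcie$ and $\wqc$ precisely as localizing subcategories of $\cCad$, check they are generated by small objects (variants of the $f(N)$, $e(V)$ generators, now indexed by individual $\fm$), so the cellularizations exist; (2) define the homology theories $\Pi$ and $\Lambda$ whose local objects are, respectively, diagrams with $N$ a homotopy product over $\fm$, and diagrams with $N$ a homotopy product of $K_\fm$-complete modules, and verify the left Bousfield localizations $L_\Pi$, $L_\Lambda$ of the cellularized categories exist (cellular + left proper is preserved under the operations in play, cf.\ the remarks in \Cref{subsec:qce}); (3) identify the bifibrant objects in each case via the cofibre sequence~\eqref{cof:fe}, showing $e(V)$ is always already $\Pi$- and $\Lambda$-local while the torsion part $f(N')$ is where the condition bites; (4) run the cellularization/localization principle (\cite{GreenleesShipley13}, as used for \Cref{equiv1}) to conclude each composite adjunction is a Quillen equivalence; (5) splice the equivalences together. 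I expect step (2)--(3)---getting the left localizations $L_\Pi$ and $L_\Lambda$ set up so that their fibrant objects have exactly the nub described in \Cref{intro:homotopical}, and checking this is compatible with the $\wqcie$/$\wqc$ cellularization rather than destroying it---to be the main obstacle; in particular one must check that after left-localizing one has neither collapsed too much (killing torsion information) nor too little (leaving non-$\wqce$ objects bifibrant), and the balancing act is exactly the delicate point. The $\Lambda$ case carries the extra subtlety that homotopical completion is not smashing, so one cannot argue purely formally and must use the $\Gamma_\fm$/$\Hom$ description of $\Lambda_\fm$.
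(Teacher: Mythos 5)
Your overall plan is in the right direction, but there are two genuine issues worth highlighting.

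First, a directional confusion: the identity functors $R_{\wqce}\cCad\to R_{\wqcie}\cCad\to R_{\wqc}\cCad$ are \emph{left}, not right, Quillen (this is the orientation used in the paper's grid in Subsection~\ref{subsec:LRdiagram}), and correspondingly ``cellularization at a \emph{smaller} class is a further right Bousfield localization'', not the other way round. The fibrations in all three cellularizations agree with those of $\cCad$, and the cofibrations of $R_{\wqce}\cCad$ form the smallest class, so the identity out of $R_{\wqce}\cCad$ preserves cofibrations and acyclic cofibrations. This is not fatal to your strategy, but it does mean your claimed adjunction $R_{\wqce}\cCad\rightleftarrows L_ER_F\cCad$ has the handedness you need to be careful about.

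Second, and more substantially, your step (4) (``run the cellularization/localization principle'') is where the argument breaks. The Cellularization Principle of~\cite{GreenleesShipley13} compares cellularizations along a pre-existing Quillen adjunction between possibly different underlying categories, and does not directly give you a criterion for a zig-zag $R_{\wqce}\cCad\to L_ER_F\cCad$ of left/right Bousfield localizations on the \emph{same} underlying category to be a Quillen equivalence. The paper instead develops a bespoke criterion in Subsection~\ref{subsec:LR} for left Quillen functors of the form $LR\D\to L\D$, and organizes the proof via the $3\times4$ grid in Subsection~\ref{subsec:LRdiagram}: it factors your desired comparison through the intermediate models $L_{\Pi}R_{\wqce}\cCad$ and $L_{\Lambda}R_{\wqce}\cCad$. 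This forces two separable verifications that your outline does not anticipate. The first is the \emph{vertical} claim that $L_{\Pi}$ and $L_{\Lambda}$ are inert on $\wqce$-cellular objects (the paper proves this by noting that a $\wqce$ object with vanishing vertex is a torsion nub, and a torsion object with $N\tensor K_{\fm}\simeq 0$ for all $\fm$ is contractible). The second is the \emph{horizontal} comparison $L_{E}R_{\wqce}\cCad\simeq L_ER_F\cCad$, which the paper proves by a factoring trick: given $x\to y$ with $x\in\wqce$ and $y\in\wqcie$ (or $\wqc$), replace $y$ by its $\wqce$-cofibrant replacement $y^c$, observe that $y^c\to y$ is a $\Pi$- (respectively $\Lambda$-) equivalence, and reduce to a concrete acyclicity check with explicit objects $\pi\in\Pi$, $\lambda\in\Lambda$. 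Neither of these mechanisms is the cofibre sequence~\eqref{cof:fe} nor the $e(-),f(-)$ building blocks that you invoke; those are used to establish \Cref{equiv2}, not \Cref{thm:adsepcomp}. Similarly, your remark that the $\Lambda$ case requires the $\Lambda_{\fm}X\simeq\Hom(\Gamma_{\fm}\unit,X)$ description is plausible, but the paper's actual computation is a direct unwinding of $[k,x\tensor\lambda]$ at each vertex, not an application of that formula.

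In short: your intuition about which objects become bifibrant after the two localizations, and about where the delicacy lies, is sound, but the proposal is missing the factorization through $L_{E}R_{\wqce}\cCad$, the vertical inertness lemma, and the concrete factoring-and-acyclicity argument that drives the horizontal equivalences. The cellularization principle alone would not get you there.
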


The outline for this section is as follows. In Subsection \ref{subsec:cof} we will introduce the relevant
classes for cellularization, in Subsection \ref{subsec:PiLambda} we
introduce the left localizations. Subsection \ref{subsec:LR}
details the strategy,  and then in Subsection \ref{subsec:LRdiagram} we
assemble the pieces to prove the theorem.

\subsection{Classes of cofibrant objects}
\label{subsec:cof} 
We  introduce three classes of objects which will be classes of
cofibrant objects in our models. To show they can be the class of
cofibrant objects  we
need to establish they are generated by a set of small objects. 
All are weakly quasi-coherent ($\wqc$, a condition on the horizontal base change) but then we
require additional  conditions on the vertical base change (either
extendedness or the somewhat weaker condition of idempotent
extendedness). This gives the hierarchy
$$\wqc \supseteq \wqcie \supseteq \wqce.$$

\subsubsection{Weakly quasi-coherent objects}
\label{subsec:wqc}

In Section \ref{subsec:cellstrictholim}   we proved that the class of $\wqce$ objects
was generated by the set $\cGad$ (see Remark~\ref{rem:cGad}).
We now prove $\wqc$ is also generated by a set of small objects. 
 Since we are working stably, it is enough to show that the
acyclic objects in the cellularization are generated by a set of small
objects. 

We suppose $\xymatrix{N \ar@{..>}[r] &Q & \ar@{..>}[l] V}$ is $\wqc$ (i.e., $L_\gen N \simeq Q$) and then form 
$$
\xymatrix{
&V\ar@{..>}[dd]\ar[dr]|= &\\
                      &                 &V \ar@{..>}[dd] \\
N'\ar@{..>}[r]\ar[dr]&j_*V \ar[dr]&               \\
                     &N\ar@{..>}[r]       &Q
}$$
where $N'$ is constructed as the pullback. As we have assumed that
$\cC$ (and hence $(L_\gen \unit) \modules_\cC$) is right proper, it follows that $L_\gen N' \simeq j_\ast V$. In particular, the back face is a $\wqce$ object. Therefore the mapping cone of this morphism of diagrams is of the form $\xymatrix{M \ar@{..>}[r] &P & \ar@{..>}[l] 0}$
where $P \simeq L_\gen M$. It remains to observe 
any such object can be $\tensor$-built from the small $\wqc$ object $\xymatrix{\prod_\fm \Lambda_\fm \unit \ar@{..>}[r] &L_\gen \prod_\fm \Lambda_\fm \unit & \ar@{..>}[l] 0}$.

\subsubsection{Modules over product rings}
\label{subsubsec:modprod}
We are considering modules $M$ over product rings $\prod_i R_i$. Using
the projection $p_i: \prod_i R_i \lra R_i$ we may define an
$R_i$-module by extension of scalars
$$e_i M :=R_i \tensor_{\prod_iR_i}M . $$
This is idempotent in the sense that if we view $e_iM$ as an $\prod_i
R_i$-module by restriction $e_i e_i M\cong e_iM$ since $R_i
\tensor_{\prod_i R_i} R_i \cong R_i$.


Indeed, there is an adjunction 
$$\xymatrix{
 \left( \prod_i R_i \right) \modules \ar@{->}[r]<1.0ex> ^{\sigma}&
\ar@{->}[l]<1.0ex>^{\pi} \prod_i  \left( R_i \modules \right)
}$$
The right adjoint $\pi$ simply takes products of modules, and 
the left adjoint $\sigma$  is defined by $\sigma (M)_i=e_iM$.
For a module $M$ over $\prod_i R_i$ we write $M^{\Pi}=\pi
\sigma M$, and think of the unit of the adjunction $M\lra M^\Pi$ as a
form of  completion.


\subsubsection{Weakly quasi-coherent idempotent extended objects}
\label{subsec:wqcie}
We now consider the special case of product rings where the factors  are
the completions $\Lambda_\fm \unit$.

\begin{defn}
A map $V\lra Q$ of modules over
$L_\gen \unit \lra L_\gen \prod_\fm  \Lambda_\fm \unit$ is {\em idempotent
extended} if the idempotent pieces  $e_\fm j_\ast V
\stackrel{\cong}\lra  e_\fm Q$ of
the base change map are isomorphisms  for all closed points~$\fm$.  It
is {\em weakly idempotent extended} if the maps are weak
equivalences, and we write $\wqcie$ for the collection of all weakly
quasi coherent, weakly idempotent extended objects. 
\end{defn}

Since passage to idempotent pieces preserves weak equivalences, a
weakly extended object is weakly idempotent extended and $\wqcie
\supseteq \wqce$.

By similar arguments to Section \ref{subsec:cellstrictholim}  
and \ref{subsec:wqc} we
see that $\wqcie$ is 
the class of cofibrant objects in suitable cellularizations with 
respect to a set of small objects. 

We suppose $\xymatrix{N \ar@{..>}[r] &Q & \ar@{..>}[l] V}$ is $\wqcie$ and then once again form 
$$
\xymatrix{
&V\ar@{..>}[dd]\ar[dr]|= &\\
                      &                 &V \ar@{..>}[dd] \\
N'\ar@{..>}[r]\ar[dr]&j_*V \ar[dr]&               \\
                     &N\ar@{..>}[r]       &Q
}$$
As before, the mapping cone is of the form $\xymatrix{M \ar@{..>}[r]
  &P & \ar@{..>}[l] 0}$
where $P\simeq L_\gen M$, and we also see that $M$ is $\Pi$-trivial in the sense that $M$ is not necessarily equivalent to 0, but $M^\Pi\simeq 0$. 

It then remains to observe that any $\Pi$-trivial object is $\tensor$-built from the
$\Pi$-trivial object
$T:=\prod_\fm \Lambda_\fm \unit /\bigoplus_\fm \Lambda_\fm \unit $.  This
may not be small for all modules but $[T, \bigoplus_i N_i]=\bigoplus_i
[T, N_i]$ if $N_i$ is $\Pi$-trivial since $[T,N]=[\prod_\fm  \Lambda_\fm \unit ,N]$ when $N$ is
$\Pi$-trivial.

\subsection{Bousfield localizations}
\label{subsec:PiLambda}
We now explore the appropriate left Bousfield
localizations corresponding to the  right Bousfield localizations $R_\wqce, R_\qcie$ and
$R_\wqc$ as in Subsection \ref{intro:homotopical}.

Given an object $E=(\xymatrix{N \ar@{..>}[r] &Q & \ar@{..>}[l] V})$ of $\cCad$, we may consider the
set $E$-we of those maps $f$ so that $E\tensor f$ is a weak equivalence. We
note that this depends only on the objects $V, Q$ and $N$ appearing in the diagram and not on the base-change
maps between them. Since we work stably, it is enough to consider the set $\langle E\rangle$ of
$E$-acyclics.

\subsubsection{The $\Pi$ Bousfield localization}
We first consider a localization designed to ensure the bottom left 
object is a product of modules for the individual factors 
$\Lambda_{\fm}\unit$, to be used for the separated model. This will
be Bousfield localization with respect to 
$$
\Pi = \{ \xymatrix{\Lambda_{\fm}\unit \ar@{..>}[r] & L_\gen\Lambda_{\fm}\unit & L_\gen\unit \ar@{..>}[l]} \st \fm \mbox{ a closed point} \}
$$
We remark that the $\wqce$-cofibrant replacement is
$$\Gamma_{\wqce} (\Pi)  
\simeq \{ \xymatrix{\Lambda_{\fm}\unit \times L_\gen \Lambda_{\fm'} \unit \ar@{..>}[r] & L_\gen \prod_\fm \Lambda_\fm \unit & L_\gen\unit \ar@{..>}[l]}  \st \fm \mbox{ a closed point} \},  $$
where $\Lambda_{\fm'}\unit =\prod_{\fn \neq \fm}\Lambda_{\fn}\unit$. 

\subsubsection{The $\Lambda$ Bousfield localization}
\label{subsubsec:Lambda}
Now we consider a localization designed to ensure the bottom left 
object is a product of complete modules for the individual factors 
$\Lambda_{\fm}\unit$, to be used in the complete model. This 
will be Bousfield localization with respect to 
$$\Lambda =\{ \xymatrix{K_\fm \ar@{..>}[r] & 0 & L_\gen\unit \ar@{..>}[l]} \st \fm \mbox{ a closed point} \}$$
where $K_\fm$ is a Koszul object with support $\overline{\{ \fm\}}$  as used to define completion.

We remark that  the $\wqce$-cofibrant replacement is
$$\Gamma_{\wqce} (\Lambda) 
\simeq \{ \xymatrix{K_\fm \times L_\gen \prod_\fm \Lambda_{\fm} \unit \ar@{..>}[r] & L_\gen \prod_\fm \Lambda_\fm \unit & L_\gen\unit \ar@{..>}[l]}  \st \fm \mbox{ a closed point} \}. $$

It is clear that the set $\Pi$ generates a localizing subcategory
containing $\Lambda$. As such it follows that $L_\Pi =L_{\Pi\coprod \Lambda }$, and there
is a natural transformation $L_\Pi \lra L_\Lambda$.


\subsection{Quillen equivalences of left and right localizations}
\label{subsec:LR}

In this  subsection we describe the main strategy of proof for Theorem
\ref{thm:adsepcomp}: we provide sufficient conditions for various
composites of left and right Bousfield localizations to be Quillen equivalences. In the next section we will apply these criteria to establish the separated and complete
models. 

In fact we will be interested in zig-zags of left Quillen functors $$\cC\lla R\cC\lra L R\cC.$$
This zig-zag is obtained by considering the square
$$\xymatrix{
\cC\ar[d] & R \cC\ar[l] \ar[d] \\  
L \cC& L R \cC\ar[l]   \rlap{ .}  
}$$
We will give criteria for these localizations to be Quillen
equivalences. 

Recall that a left localization $\cC\lra L\cC$ is a Quillen equivalence if all the
maps that are inverted are already weak equivalences. We observe that it is
enough to check this on weak equivalences between cofibrant
objects, and the cofibrant objects are the same in both categories. Once again, for stable localizations it is enough to check this for acyclics. That is, we must show that if $x$ is a 
cofibrant object with $x\simeq 0$ in $L\cC$ then in fact $x\simeq 0$ in 
$\cC$.

Similarly to show that a right localization $R \D \lra \D$ is a Quillen equivalence we
must show that if $x\lra y$ is a cellular equivalence then it is an
actual equivalence. It is also sufficient to do this for fibrant
objects, and fibrant objects are the same in both categories. In the stable setting it is enough to do it for acyclics. That is, we must show that if $x$ is a  fibrant object with $x\simeq 0$ in $R\D$ then in fact $x\simeq 0$ in 
$\D$.


Of particular interest to this section will be those left Quillen functors of the form $LR\D \lra L\D$. Suppose we have a morphism  $f \colon x\lra y$
where $x$ is cofibrant in $LR\D$ and $y$ is fibrant in $L\D$. We must then show that $f$ is a weak equivalence in $LR\D$ if and only if it is a weak equivalence in $L\D$. We now assume that $L$ is a homological localization with respect to some object $E$. Observe that it is enough to check for $x$ cofibrant in $R\D$ and $y$ fibrant in $L\D$ that if $f$ is an equivalence in $LR\D$ then it is an equivalence in $L\D$. In particular, it suffices to show that if $E\tensor x \lra E\tensor y$ is an equivalence in $R\D$ then it is an equivalence in $\D$. 

\subsection{Cases at hand}
\label{subsec:LRdiagram}
For the cofibrant objects we have three candidates $\wqce \subseteq
\wqcie \subseteq \wqc$. We showed in Sections
\ref{subsec:cellstrictholim}, 
\ref{subsec:wqc} and \ref{subsec:wqcie}  that
each of these three classes is the sets of cofibrant objects 
in the cellularization with respect to a set of small objects. Accordingly, we can
form right Bousfield localizations with respect to each of them. 

For controlling the fibrant objects we wish to use the left Bousfield localizations introduced in \Cref{subsec:PiLambda}, namely $L_\Pi$ and $L_\Lambda$.

Assembling the pieces, one may construct the following diagram of left Quillen functors, where the marked equivalences are the things we shall now prove: 
$$\xymatrix@R=12mm{
\cCad \ar[d] & R_\wqc \cCad \ar[l] \ar[d] &  R_\wqcie \cCad \ar[l] \ar[d] &R_\wqce \cCad \ar[l] \ar[d]|-{\simeq}\\  
L_\Pi \cCad \ar[d] &L_\Pi R_\wqc \cCad \ar[d]\ar[l] & L_\Pi R_\wqcie \cCad \ar[l] \ar[d]|-{\simeq} &  L_\Pi 
R_\wqce \cCad \ar[l]|-{\simeq} \ar[d]|-{\simeq} \\  
L_\Lambda \cCad & L_\Lambda R_\wqc \cCad \ar[l]& L_\Lambda  R_\wqcie \cCad \ar[l]|-{\simeq}  &  L_\Lambda R_\wqce \ar[l]|-{\simeq} \cCad \\  
}$$

In the following discussion we will as usual have an object 
\begin{align*}
x= \left(
\begin{gathered} \xymatrix{
&V\ar@{..>}[d]\\
N \ar@{..>}[r]&Q 
} \end{gathered}
 \right)
\end{align*}
with  nub $N$, and vertex $V$. 

To establish the right hand vertical equivalences, since $\Lambda
\subseteq \Pi$, it suffices to prove that if a $\wqce$ object is $\Lambda$-trivial then it is
trivial.  Suppose then that $x$  is $\wqce$ and moreover
$\Lambda$-trivial. Since the vertex $V$ is at an initial point of the
diagram, we conclude that $V\simeq 0$. Since the object $x$ is weakly extended we have $j_\ast V \simeq Q\simeq 0$. Since it is weakly quasi-coherent, it follows that $N$ is torsion. The result then follows from the fact that a torsion object $N$  with $N \tensor K_\fm\simeq 0$ for all
$\fm$ is contractible. 

To establish the right hand horizontal equivalence in the middle row 
we need to show that if 
we have a morphism $x \to y$ where $x$ is $\wqce$, $y$ is $\wqcie$ and $x\tensor \pi \lra y\tensor \pi$ is a 
$\wqce$-cellular equivalence for all $\pi \in \Pi$ then it is a $\wqcie$-cellular equivalence.

It is convenient to factor $x\lra y$ as $x\lra y^c \lra y$ where $y^c$ is 
$\wqce$ (i.e., the cofibrant replacement of $y$ in $R_{\wqce} \cCad$). Now the map $y^c\lra y$ is an equivalence at the vertex,
an i-equivalence at the $L\Lambda \unit$-module, and a
$\Pi$-equivalence at the nub, so it is a $\Pi$-equivalence. In other words we may assume both 
$x$ and $y$ are $\wqce$, and since we are in the stable setting, 
we need only prove for $x$ $\wqce$ that $[k,x\tensor \pi]=0$ for $k$ $\wqce$
implies it also holds for $k$ $\wqcie$.  If $k$ is $\wqcie$ then one checks
that the $\wqce$-cellularization $k^c \lra k$ induces an isomorphism in $[- , x 
\tensor \pi]$ for $\pi \in \Pi$. This is true because 
$[k, x\tensor \pi]=[k\tensor \pi , x\tensor \pi]$, and $k^c\lra k$ is
a $\pi$-equivalence. 

To establish the lower horizontal equivalences we need to show that if 
$x$ is $\wqce$, and $y$ is $\wqc$ and $x\tensor \pi \lra y\tensor \pi$ is a 
$\wqce$-cellular equivalence then it is a $\wqc$-cellular equivalence. It 
is once again convenient to factor $x\lra y$ as $x\lra y^c\lra y$ where $y^c$ is 
$\wqce$. Now the map $y^c\lra y$ is an equivalence at the vertex, and a 
$\Lambda$-equivalence at the $\Lambda \unit$
spot, so it is a $\Lambda$-equivalence. In other words we may assume both 
$x$ and $y$ are $\wqce$, and since we are in the stable setting, 
we need only prove for $x$ $\wqce$ that $[k,x\tensor \lambda]=0$ for $k$ $\wqce$ for all $\lambda \in \Lambda$ 
implies it also holds for $k$ $\wqc$.  If $k$ is $\wqc$ then the 
$\wqc$-cellularization $k^c \lra k$ induces an isomorphism in $[- , x 
\tensor \lambda]$. 

Indeed, suppose 
\begin{align*}
k&=(\xymatrix{M \ar@{..>}[r] &P & \ar@{..>}[l] U}),\\ x&=(\xymatrix{N \ar@{..>}[r] &Q & \ar@{..>}[l] V}),\\
\lambda &=(\xymatrix{K_\fm \ar@{..>}[r] &0 & \ar@{..>}[l] L_\gen \unit }).
\end{align*}
Then  $[k, x\tensor \lambda]=[M,
N\tensor K_\fm]\times [U, V]$. Since $k^c\lra k$ is an isomorphism at
the $L_\gen \unit$ vertex, there is nothing to check at that point. At the $\prod_\fm \Lambda_\fm \unit$ position, we
see the fibre of $M^c \lra M$ is the same as the fibre at $P^c\lra P$,
and hence in particular it is $L_\gen X$ for some $X$, and $[L_\gen X, N\tensor K_\fm]=0$.

This completes the proof of Theorem \ref{thm:adsepcomp}.

\subsection{Bifibrant objects}
In summary, we now have three homotopical models:

\begin{itemize}
\item The adelic model -- $R_\wqce \cCad$, with bifibrant objects $\wqce$
\item The separated model -- $L_\Pi R_\wqcie \cCad$, with bifibrant
  objects $\mathrm{wpqcie}$ (i.e., $\wqcie$ and the nub equivalent to a product)
\item The complete model -- $L_\Lambda R_\wqc \cCad$, with bifibrant
  objects $\wqck$ (i.e., $\wqc$, with the nub complete).
\end{itemize}

To finish off \Cref{part1} we will describe explicit constructions of moving between bifibrant replacements of one of these models to another.

To obtain a separated or complete bifibrant object from an adelic bifibrant object
(i.e., a $\wqce$-object $\xymatrix{N \ar@{..>}[r] &Q & \ar@{..>}[l] V}$) we form the diagram
$$\xymatrix{&&V\ar[d]\\
N\ar[dd]\ar[rr]\ar[dr]&&Q\ar[dd]\\
&LN\ar[dd]\ar[ur]|{\simeq}&\\
N^{\Pi}\ar[dr]\ar[rr]\ar[dd]&&Q'\ar[dd]\\
&LN^{\Pi}\ar[ur]|{\simeq}\ar[dd]&\\
N^\Lambda \ar[dr]\ar[rr]&&Q''\\
&LN^\Lambda \ar[ur]|{\simeq}&
}$$
where the front right faces define $Q'$ and $Q''$ as homotopy pushouts
and superscripts $\Pi$ and $\Lambda$ refer to fibrant replacements. One
sees that the map $Q\lra Q'$ is an idempotent equivalence since that holds for
$N\lra N^\Pi$ and this is preserved by localization. 

Alternatively, given a bifibrant model in the complete or separated
model, we need to upgrade the vertical from a map or an 
idempotent extended map and to a fully extended map, and these operate by taking homotopy pullbacks on the
back face. The fact that these are homotopy pullbacks was discussed in
the overall introduction.

To go from the complete to the separated
model we first go to the standard model and then to the separated
model. 

\part{Algebraic models}\label{part2}

\section{Skeletons and the roadmap}
\label{sec:roadmap}

In \Cref{part2} we show how in certain
algebraic situations we can replace the underlying category $\cCad$ of
the model by a more economical skeleton.  A  skeleton of
a model category is a Quillen equivalent subcategory $\cS$ in which homotopy
equivalences are closer to isomorphisms, so that the  homotopy relation
has less work to do.

\subsection{Skeletons}
The simplest case is when  the inclusion $i \colon \cS \lra \cC$ has a {\em 
  right} adjoint, that is, $\cS$ is \emph{coreflective} in $\cC$.  The subcategory $\cS$ then inherits completeness and 
cocompleteness 
from $\cC$.
 If 
the generating sets of  cofibrations and acyclic cofibrations of $\cC$
lie in $\mathcal{S}$ there is a left-lifted model structure on
$\mathcal{S}$  with the same classes of weak equivalences,
cofibrations and fibrations~\cite{coreflective} and the inclusion is a
Quillen equivalence: we say $\mathcal{S}$ is a 
\emph{cellular skeleton} of $\cC$. 

If the inclusion $i: \cS \lra \cC$ has a {\em 
  left} adjoint, that is, $\cS$ is \emph{reflective} in $\cC$, then
there is a little more to do.  The subcategory $\cS$ again inherits
completeness and cocompleteness  from $\cC$. For a simple example of this, the
reader can look ahead to Section~\ref{subsec:nubsepskeleton}, where we
describe $\prod_\fm (R_\fm \modules)$ as a reflective subcategory of
$\prod_\fm (R_\fm \modules)$.

 Under suitable hypotheses, if 
 $\cC$ is cofibrantly generated
 we  can then lift the model structure in $\cC$ along
the right adjoint $i$ to obtain a model structure on $\cS$. 
If the inclusion of $\cS$ in  $\cC$ is a Quillen equivalence, 
we say $\mathcal{S}$ is a  \emph{local skeleton} of $\cC$.  

For example, if the left adjoint takes small objects to small objects, and if the unit and
counit are equivalences on generators, we obtain the conclusion from 
the Cellularization Principle. However,  in our principal examples the left
adjoint does not preserve small objects and we shall see in Section~\ref{subsec:qcmodels} that we must argue more directly.

\subsection{The roadmap}
\label{subsec:roadmap}
We may now describe our strategy. In Part 1 we constructed three
different models for $\cC$ with underlying category $\cCad$. We are going to
vary the underlying category and find models for $\cC$ on each one. In
fact we will construct a diagram of adjoint pairs in the pattern 
$$
\xymatrix{
\cCadqce 
\ar@<-.75ex>[d]_{i} 
& \cCsepqcie \ar@<-.75ex>[d]_{i} &\\
\cCadqc 
\ar[u]<-1.0ex>_{\Gamma_{\mathrm{e}}}\ar@<-.75ex>[d]_{i}
\ar@<.75ex>[r]^{\sigma_*} &
 \cCsepqc \ar[u]<-1.0ex>_{\Gamma_{\mathrm{ie}}}
\ar@<.75ex>[l]^{\sigma^*}
\ar@<-.75ex>[d]_{i} 
\ar@<.75ex>[r]^{\ell_*} 
&\cCLkqc  \ar@<-.75ex>[d]_{i} \ar@<.75ex>[l]^{\ell^*}\\
\cCad 
\ar[u]<-1.0ex>_{\Gamma_{\qc}} & \cCsep \ar[u]<-1.0ex>_{\Gamma_{\qc}}
&\cCLk \ar[u]<-1.0ex>_{\Gamma_{\qc}}
}$$
The categories $\cCad, \cCsep$ and $\cCLk$ at the bottom are categories of diagrams
on the cospans $\ad, \sep$ and $\Lk$ to be introduced below. 
Above each of the three there are subcategories.
In each case, the letter $i$ indicates a coreflective inclusion with
right adjoint  $\Gamma$ (with appropriate subscript). The functors
$\Gamma$  are introduced in Section \ref{sec:cellularskeletons}. 
The horizontal adjoint pairs are induced from adjoint pairs at one
vertex of the cospan by a construction to be described in Subsection
\ref{subsec:qcpush}. We will show that all the categories in the
diagram except $\cCsep$ and $\cCLk$ inherit model structures lifted
from  that on $\cCad$.

The diagram serves as a roadmap. 
The minimal skeletons of the adelic, separated and complete models are at the tops of the columns. 
The fact that vertical inclusions $i$ are Quillen equivalences follows
by the cellular skeleton argument above: this only requires us to show that the model
structures are generated by cofibrations and acyclic cofibrations from
the top category in each column. In effect we need to show an object
for which the $\qc$, $\qce$, $\qcie$ condition holds weakly is equivalent to a subobject for which is holds strongly.

For the horizontal adjunctions, the categories decrease in size as we
move right. To show that they are inclusions of local skeletons, we
use corresponding generators and show that the unit and counit are 
weak equivalences. 

\subsection{The equivalences} Here we shall give an overview of the argument: full
details are provided in Section~\ref{sec:skeletonequiv}.

If we start from the adelic model $R_\wqce\cCad$ 
at the bottom left and the passage to cellular skeletons in the first 
column shows 
$$R_\wqce\cCad=R_\we R_\wqc \cCad\simeq_Q R_\we \cCadqc\simeq_Q
\cCadqce,$$ 
and the cellular skeleton of the adelic model is $\cCadqce$.

If we start from the separated model $L_\Pi R_\wqcie\cCad$ we  have the equivalences  
$$L_\Pi  R_\wqcie\cCad =L_\Pi R_\wie R_\wqc\cCad  
\stackrel{(1)} \simeq_Q L_\Pi R_\wie \cCadqc  
\stackrel{(2)} \simeq_Q R_\wie \cCsepqc  
\stackrel{(3)} \simeq_Q \cCsepqcie$$ 
where (1) and (3) are cellular skeleton equivalences and (2) is a local  
skeleton equivalence. The skeleton of the separated model is $\cCsepqcie$.

If we start from the complete model $L_\Lambda R_\wqc\cCad$ we  have the equivalences  
$$L_\Lambda   R_\wqc\cCad \stackrel{(1)} \simeq_Q L_\Lambda  \cCadqc = L_\Lambda L_\Pi \cCadqc  
\stackrel{(2)} \simeq_Q L_\Lambda \cCsepqc  
\stackrel{(3)} \simeq_Q \cCLkqc$$ 
where (1) is a cellular skeleton equivalence, whilst (2) and (3) are local  
skeleton equivalences. The skeleton of the complete model is $\cCLkqc$.

It remains to introduce the categories and functors appearing in the
roadmap diagram, which we do in Sections \ref{sec:adsepLk} and~\ref{sec:cellularskeletons}. After this we shall show
that objects with weak conditions are equivalent to ones with strong
conditions. Finally we establish that the unit and counits of the horizontal adjunctions are
derived equivalences on generators: this  will be done in Section \ref{sec:skeletonequiv}. 

\section{Local skeletons of the nub}
\label{sec:nubskeletons}
As described above, there are two ways of moving to a smaller
skeleton. One of them changes the categories in the diagram and one of them
places restrictions on the objects for a fixed diagram. The change of
categories in the diagram is passage to a local skeleton, and is based  on changing the  nub category $(\prod_\fm
\Lambda_\fm \unit)\modules$ to a local skeleton. In this section we
introduce the functors we use to change the nub and in Section
\ref{sec:adsepLk} we explain how to lift this to diagrams. 

\subsection{Pattern}
In both cases the argument proceeds as follows. We have an inclusion
$i: \cN' \lra \cN$ of a subcategory with a left adjoint $F$, and a
left Bousfield localization $L$ of $\cN$. In our situation this gives a diagram of
left Quillen functors
$$
\xymatrix{
\cN \ar[d] \ar[r]^F &\cN'\\
L\cN \ar[ur]_{\simeq_Q}^{\overline{F}} & 
}$$
In other words, we have a Quillen pair $F\dashv i$ relating $\cN$ to
$\cN'$, but $F$ is also a left Quillen functor from $L\cN$ and the
same adjoint pair gives a Quillen equivalence $L\cN
\simeq_Q \cN'$.

\subsection{The separated skeleton}
\label{subsec:nubsepskeleton}
Let $R_\fm = \Lambda_\fm \unit$ in a Noetherian model category $\cC$. There are many modules over the product ring $\prod_\fm \Rm$, but the
ones we care most about are those which are products of modules over
the individual rings $\Rm$. Using the projective model
structure, 
$\prod_\fm (\Rm\modules)$ is a local skeleton of $(\prod_\fm \Rm)\modules$.

As in Subsection \ref{subsubsec:modprod}, the functor 
$$\sigma: (\prod_\fm \Rm ) \modules \lra \prod_\fm 
(\Rm \modules). $$ 
defined by 
$$\sigma(M)_\fm =e_\fm M := \Rm \tensor_{\prod_\fm
  \Rm} M$$
 has right adjoint 
$$\pi : (\Rm \modules)\lra (\prod_\fm
\Rm) \modules$$
defined by 
$$\pi (\{N_\fm\})=\prod_\fm N_\fm. $$
To see that $\sigma$ is the inclusion of a reflective subcategory we
note
$$N_\fm \stackrel{\cong}\lra R_\fm \tensor_{\prod_\fm \Rm} \prod_\fm N_\fm$$
is an isomorphism.

\begin{lemma}
\label{lem:sigmapi}
The functor $ \sigma: (\prod_\fm \Rm ) \modules \lra \prod_\fm 
(\Rm \modules)$ is the inclusion of a reflective subcategory and
$\prod_\fm (\Rm
\modules)$ is a local skeleton of  $L_\Pi (\prod_\fm \Rm ) \modules.$
\end{lemma}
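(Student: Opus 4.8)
The plan is to establish the two halves of the statement separately, following the template described at the opening of this section: reflectivity is formal, and the substance is that the Quillen pair $\sigma \dashv \pi$ becomes a Quillen equivalence after the left Bousfield localization $L_\Pi$. Throughout I write $e_\fm M = \Rm \tensor_{\prod_\fn R_\fn} M$, and I record the one non-formal input up front: since $\prod_\fn R_\fn \cong \Rm \times \prod_{\fn \neq \fm} R_\fn$, each $\Rm$ is a retract of $\prod_\fn R_\fn$, so $e_\fm$ is exact, and the extension/restriction-of-scalars adjunction along $\prod_\fn R_\fn \to \Rm$ is a Quillen adjunction with $e_\fm$ equal to its own left derived functor; in particular the tensor products below are automatically derived.

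\emph{Reflectivity and the Quillen pair.} The adjunction $\sigma \dashv \pi$ of \Cref{subsubsec:modprod} has counit $\sigma\pi \Rightarrow \mathrm{id}$ whose component at $\{N_\fm\}$ is inverse to the displayed isomorphism $N_\fm \xrightarrow{\cong} \Rm\tensor_{\prod_\fn R_\fn}\prod_\fn N_\fn$; hence the counit is an isomorphism, $\pi$ is fully faithful, and it identifies $\prod_\fm(\Rm\modules)$ with the full reflective subcategory of $(\prod_\fm\Rm)\modules$ on the modules of product form, with reflector $\sigma$ and unit the completion map $M \to M^\Pi$. I will reuse the consequence $e_\fn(\prod_\fm Y_\fm) \cong Y_\fn$ (the displayed isomorphism again, valid whenever each $Y_\fm$ is an $\Rm$-module). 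For the projective model structures $\sigma \dashv \pi$ is a Quillen pair: $\pi$ carries a componentwise (trivial) fibration $\{f_\fm\}$ to $\prod_\fm f_\fm$, whose underlying map in $\cC$ is a product of (trivial) fibrations and hence a (trivial) fibration, and (trivial) fibrations of $(\prod_\fm\Rm)$-modules are detected in $\cC$, so $\pi$ is right Quillen.

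\emph{Descent to $L_\Pi$.} The heart of the argument is that $\sigma$ remains left Quillen out of $L_\Pi(\prod_\fm\Rm)\modules$. Restricting the set $\Pi$ to the nub — legitimate because an $E$-equivalence depends only on the objects occurring in $E$ (\Cref{subsec:PiLambda}) — the localization $L_\Pi$ on $(\prod_\fm\Rm)\modules$ is the homological localization at the family $\{\Rm\}_\fm$, each $\Rm$ regarded as a $\prod_\fn R_\fn$-module via the projection; as $\Rm\tensor_{\prod_\fn R_\fn}(-) = e_\fm$ and we work stably, a $\Pi$-acyclic module is precisely a $Z$ with $e_\fm Z \simeq 0$ for all $\fm$. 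For such a $Z$ and fibrant $\Rm$-modules $N_\fm$, the extension/restriction adjunction gives
\[
[Z,\ \prod_\fm N_\fm] \;=\; \prod_\fm [Z, N_\fm] \;=\; \prod_\fm [e_\fm Z, N_\fm] \;=\; 0 ,
\]
so $\prod_\fm N_\fm$ is $\Pi$-local. Hence $\pi$ sends fibrant objects to $L_\Pi$-local objects; since $L_\Pi$ leaves the cofibrations unchanged, the universal property of left Bousfield localization shows $\sigmab \colon L_\Pi(\prod_\fm\Rm)\modules \to \prod_\fm(\Rm\modules)$ is still left Quillen. I expect this to be the main obstacle: it is the only place where the shape of the localization genuinely enters, forcing fibrant nubs to become products, and it rests squarely on the flatness (retract) of $\Rm$ over $\prod_\fn R_\fn$.

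\emph{Quillen equivalence and conclusion.} Finally I would verify the Quillen-equivalence criterion for $\sigmab \dashv \pi$. First, $\pi$ reflects weak equivalences between fibrant objects: if $\prod_\fm f_\fm$ is a weak equivalence in $L_\Pi(\prod_\fm\Rm)\modules$ then — source and target being $\Pi$-local by the previous step — it is a weak equivalence in $(\prod_\fm\Rm)\modules$, and applying the exact functor $e_\fn$ together with $e_\fn\prod_\fm f_\fm \cong f_\fn$ shows each $f_\fn$ is a weak equivalence. Second, for cofibrant $M$ the derived unit $M \to \prod_\fm (e_\fm M)^{\mathrm{fib}}$ (the componentwise fibrant replacement of $\sigma M$, pulled back through $\pi$) becomes, after applying $e_\fn$ and using $e_\fn\prod_\fm(e_\fm M)^{\mathrm{fib}} \cong (e_\fn M)^{\mathrm{fib}}$, the fibrant-replacement map $e_\fn M \to (e_\fn M)^{\mathrm{fib}}$ in $\Rm\modules$ (up to isomorphism), a weak equivalence; hence the derived unit is a $\Pi$-equivalence. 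So $\sigmab \dashv \pi$ is a Quillen equivalence. Moreover the weak equivalences and fibrations of the projective model structure on $\prod_\fm(\Rm\modules)$ are exactly the maps $f$ for which $\pi f$ is a weak equivalence, respectively a fibration between the relevant fibrant objects, in $L_\Pi(\prod_\fm\Rm)\modules$ — by the computations above — so this model structure is the one right-lifted along $\pi$, and therefore $\prod_\fm(\Rm\modules)$ is a local skeleton of $L_\Pi(\prod_\fm\Rm)\modules$ in the sense of \Cref{subsec:roadmap}.
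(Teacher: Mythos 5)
Your proof is correct and follows essentially the same route as the paper's (which is only a sketch, ending with "One checks directly that this is a Quillen equivalence"); you have supplied the verifications the paper leaves implicit, in particular that $\pi$ sends fibrants to $\Pi$-locals because $e_\fm$ is a retract of the identity, that $\pi$ reflects weak equivalences between fibrants, and that the derived unit is a $\Pi$-equivalence.
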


\begin{proof}
We take the projective  model structure on $(\prod_\fm
\Rm)\modules$ and similarly on each of the factors of  $\prod_\fm (\Rm
\modules)$. We see that $\sigma \dashv \pi$ is a Quillen adjunction,
and we may lift the model structure along the right adjoint.

Indeed, this same adjunction gives a Quillen pair $\sigmab: L_\Pi
(\prod_\fm \Rm)\modules \lra \prod_\fm( \Rm\modules)$. One checks directly
that this is a Quillen equivalence. 
\end{proof}

\subsection{The L-complete skeleton}
\label{subsec:nubLkskeleton}
Unlike the separated skeleton, the L-complete skeleton will only exist under some hypothesis on $\Lambda_\fm \unit$. As such, we now restrict to the situation that $\Rm$ is a graded commutative Noetherian local  ring with maximal
ideal $\fm$. We would like to 
replace $\Rm\modules$ by a skeleton which is complete in some sense. 
The first thought is  to replace $\Rm \modules$
 by its Bousfield completion $\Lambda_\fm (\Rm\modules)$
however this is not helpful for the  skeleton since the underlying subcategory remains the
same. 

It is natural to seek an abelian model. One thinks first of  the category of $\fm$-adically 
complete modules, but since $\fm$-adic completion is not exact, the 
category is not well behaved.  Nonetheless, Pol and Williamson have shown the completion
 is the derived category of a convenient abelian category.  
Writing  $\Lfm$ for the 0th left derived functor $\Lfm$
of $\fm$-adic completion, we take   the $\Lfm$-complete modules
$M$ (those for which the natural transformation $M\lra \Lfm M$ is an
isomorphism \cite{GreenleesMay92}). The category $\RmmodLfmhat$  of differential
graded $\Lfm$-complete modules is a very well behaved reflective
subcategory of $\Rm\modules$ and gives a local skeleton. The following
result is stated in \cite[Appendix A]{HoveyStrickland99},
\cite[Appendix A]{BarthelFrankland}; both references
have a blanket assumption that their rings are regular and local, but 
these assumptions are unnecessary for the given proofs of the following statement.
 
\begin{lemma}
\leavevmode
\label{lem:Lfmreflector}
\begin{itemize}
    \item The category $\RmmodLfmhat$ is abelian, and the inclusion
      functor $i \colon \RmmodLfmhat \hookrightarrow \Rm \modules$ is exact.
    \item The  inclusion functor has left  adjoint $\Lfm$.
    \item The category $\RmmodLfmhat$ is complete and cocomplete.
\end{itemize}
\end{lemma}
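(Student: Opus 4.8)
The plan is to reduce the whole statement to the analogous facts about ordinary graded $\Rm$-modules and then lift to the differential graded setting formally.

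First I would record the module-level input. Since $\Rm$ is Noetherian the ideal $\fm$ is finitely generated, so $\Lfm = L_0^\fm$ is the $0$th left derived functor of $\fm$-adic completion and the total left derived functor $L_*^\fm$ may be computed by the finite-length stable Koszul (telescope) complex on a generating set of $\fm$, following the Greenlees--May formalism \cite{GreenleesMay92}. From this description one extracts the facts that get used: the unit $\eta_M \colon M \to \Lfm M$ exhibits $\Lfm$ as an idempotent functor, so the $\Lfm$-complete modules are precisely the essential image of $\Lfm$; each $L_s^\fm M$ is $\Lfm$-complete and $L_s^\fm$ vanishes above the number of generators of $\fm$; a short exact sequence of $\Rm$-modules yields a long exact sequence in $L_*^\fm$; and consequently the full subcategory $\cA \subseteq \Rm\modules$ of $\Lfm$-complete modules is replete and closed under kernels, cokernels, extensions, and arbitrary products (indeed all limits) formed in $\Rm\modules$. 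All of this is the content of \cite[Appendix A]{HoveyStrickland99} and \cite[Appendix A]{BarthelFrankland}.

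The one point needing genuine attention --- and the step I expect to be the main obstacle --- is the assertion in the statement that the regularity hypothesis of those references can be dropped. Here I would go through their arguments and check that each use of the standing hypotheses invokes only that $\fm$ is finitely generated and that $\Rm$ is local: idempotence of $\Lfm$ and the exact-sequence and closure properties above are formal consequences of the Koszul-complex presentation of $L_*^\fm$ together with the local hypothesis, which we retain, whereas regularity enters those appendices only in statements about finitely generated modules and $\fm$-adic filtrations that play no role here.

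Granting this, the remaining assertions are essentially formal. A replete full subcategory of an abelian category closed under kernels and cokernels is itself abelian with exact inclusion, which gives the first bullet; the adjunction $\Lfm \dashv i$ is the universal property of $\eta_M$ among maps into $\Lfm$-complete modules, which follows from idempotence; and for the third bullet one uses that $\cA$ is a reflective subcategory of the complete and cocomplete category $\Rm\modules$, so that limits in $\cA$ are computed in $\Rm\modules$ (using closure under products and kernels) while colimits are computed by applying $\Lfm$ to the colimit taken in $\Rm\modules$. Finally I would transport everything to $\RmmodLfmhat$: a differential graded $\Rm$-module is $\Lfm$-complete exactly when its underlying graded module is (as $\Lfm$ acts degreewise and a degreewise isomorphism of complexes is an isomorphism), so $\RmmodLfmhat$ is identified with the category of chain complexes in $\cA$; chain complexes in an abelian category again form an abelian category, with degreewise (hence exact) inclusion into chain complexes in $\Rm\modules$, the reflector $\Lfm$ extends degreewise, and limits and colimits of complexes are formed degreewise, so abelianness, exactness of the inclusion, the left adjoint $\Lfm$, and completeness and cocompleteness all descend from the module-level statements.
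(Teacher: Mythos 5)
Your proposal is correct and follows essentially the same route as the paper: the paper gives no argument beyond citing \cite[Appendix A]{HoveyStrickland99} and \cite[Appendix A]{BarthelFrankland} together with the remark that the blanket regularity hypothesis in those references is not used in the relevant proofs, and your write-up simply unpacks what that citation is carrying (the Koszul-complex description of $L_*^\fm$, idempotence of $\Lfm$, closure of the $\Lfm$-complete modules under kernels, cokernels and limits, and the formal consequences for abelianness, the reflector, and (co)completeness) and then passes to the differential graded setting degreewise, which is the intended reading of $\RmmodLfmhat$.
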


 On the basis of this we have an abelian local skeleton of $\Lambda_\fm(\Rm \modules)$.

\begin{prop}[{\cite[6.10]{PolWilliamson}}]
\label{prop:Lkmodel}
There is a cofibrantly generated monoidal model structure on
$\RmmodLfmhat$ where a map is a weak equivalence or fibration if it is when
viewed in $\Rm \modules$: the model structure is right-lifted from that in
$\Rm \modules$.

Moreover, there is a symmetric monoidal Quillen equivalence
$$
L_0^\fm : \Lambda_\fm (\Rm \modules )\rightleftarrows \RmmodLfmhat  : i.
$$
\end{prop}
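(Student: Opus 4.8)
The plan is to obtain the model structure in the first assertion by right transfer (right lifting) of the standard cofibrantly generated projective model structure on the category $\Rm\modules$ of unbounded differential graded $\Rm$-modules along the reflection $\Lfm \dashv i$ supplied by Lemma~\ref{lem:Lfmreflector}. Since that lemma tells us $\RmmodLfmhat$ is complete and cocomplete, the standard transfer criterion (Crans, Kan) reduces the existence of the lifted structure to two points: that the domains of the images $\Lfm I$ and $\Lfm J$ of a set $I$ of generating cofibrations and $J$ of generating acyclic cofibrations of $\Rm\modules$ are small in $\RmmodLfmhat$, and that every relative $\Lfm J$-cell complex is a weak equivalence, i.e.\ becomes a quasi-isomorphism in $\Rm\modules$ under $i$. (Recall that, by exactness of $i$ from Lemma~\ref{lem:Lfmreflector}, homology in $\RmmodLfmhat$ is computed in $\Rm\modules$, so the candidate weak equivalences are exactly the quasi-isomorphisms.) The smallness point follows because $\RmmodLfmhat$ has a small projective generator — one may take $\Lfm$ of a free $\Rm$-module, which is projective since $\Lfm$ is left adjoint to the exact functor $i$ — as recorded in the cited appendices of \cite{HoveyStrickland99, BarthelFrankland}.

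The acyclicity condition is the main obstacle. Choose $J$ to consist of the maps $0 \to D^n$, where $D^n$ is the contractible disk complex on $\Rm$; then every relative $\Lfm J$-cell complex is built from $0$ by completed coproducts, pushouts and transfinite composites of complexes of the form $\Lfm$ applied to a chain-contractible complex of free $\Rm$-modules. Since $\Lfm$ is additive it preserves chain contractions, and a completed coproduct (which is $\Lfm$ of the ordinary coproduct) of chain-contractible complexes is again chain-contractible; tracking this through the cellular construction shows that any relative $\Lfm J$-cell complex is a quasi-isomorphism, as required. This produces the transferred model structure, cofibrantly generated by $\Lfm I$ and $\Lfm J$. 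Monoidality is then checked against the completed tensor product $M \mathbin{\widehat{\tensor}} N := \Lfm(M \tensor_\Rm N)$: the pushout-product and unit axioms for $\mathbin{\widehat{\tensor}}$ reduce to the corresponding axioms in $\Rm\modules$, using that $\Lfm$ is strong monoidal and sends (acyclic) cofibrations to (acyclic) cofibrations.

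For the second assertion, I would first check that $\Lfm \dashv i$ is a Quillen pair from $\Lambda_\fm(\Rm\modules)$ to $\RmmodLfmhat$. Left Bousfield localization leaves cofibrations unchanged, so the cofibrations of $\Lambda_\fm(\Rm\modules)$ are those of $\Rm\modules$, and $\Lfm$ preserves them since it sends $I$ to $\Lfm I$ and preserves colimits; and $\Lfm$ sends a $\Lambda_\fm$-acyclic cofibration $f$ (a cofibration with $f\tensor K_\fm$ a quasi-isomorphism) to a quasi-isomorphism by the comparison lemma below, so $\Lfm$ is left Quillen. For the Quillen equivalence it then suffices to show the derived unit and counit are weak equivalences. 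The counit is the easy half: every object of $\RmmodLfmhat$ is fibrant (every map to $0$ is a degreewise surjection), and $\Lfm i = \mathrm{id}$ since $i$ is the inclusion of a reflective subcategory, so for $Y \in \RmmodLfmhat$ the derived counit is identified, via the comparison, with the $\Lambda_\fm$-localization of $iY$, which is a weak equivalence because $Y$, being $\Lfm$-complete, is already $K_\fm$-local. Thus everything reduces to the derived unit: for a cofibrant complex $X$ of $\Rm$-modules one must show $X \to \Lfm X$ is a $\Lambda_\fm$-equivalence and $\Lfm X$ is $K_\fm$-local, for then the derived unit is a $K_\fm$-equivalence between $K_\fm$-local objects, hence an equivalence.

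The \emph{crux} is this comparison between naive $\Lfm$-completion and the derived $\fm$-adic completion $\Lambda_\fm$ on cofibrant objects. For $X$ a cofibrant, hence degreewise free (and suitably $K$-projective), complex of $\Rm$-modules, $\Lfm X = \lim_n X/\fm^n X$ computes the derived completion $\Lambda_\fm X$: the relevant $\lim^1$-terms vanish because the towers $\{X/\fm^n X\}$ are surjective, and $\Lfm X$ is by construction $\Lfm$-complete, hence $K_\fm$-local. This is where the hypothesis that $\Rm$ is graded commutative Noetherian local enters (finite generation of $\fm$, Artin--Rees, exactness of completion on finitely generated modules), and it is precisely the content of the appendices of \cite{HoveyStrickland99} and \cite{BarthelFrankland} invoked in Lemma~\ref{lem:Lfmreflector}. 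Granting it, the two remaining items of the second assertion follow at once, and the monoidality of the Quillen equivalence is inherited from the strong monoidality of $\Lfm$ established above. I expect verifying this comparison with full care in the unbounded setting — rather than the formal homotopy-theoretic bookkeeping — to be the one genuinely delicate step.
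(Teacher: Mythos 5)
The paper does not prove this statement itself — it cites \cite[6.10]{PolWilliamson} — but it adds a remark that directly undercuts your strategy: \emph{``We note that $\Lfm$ does not preserve sequential colimits, so this requires a direct proof rather than using the general right-lifting criterion.''} Your proposal is built precisely on the general right-lifting (transfer) criterion, and the step at which it breaks is the smallness condition, which you dispatch in one sentence by asserting that $\RmmodLfmhat$ has ``a small projective generator.'' That is the gap. The object $\Lfm \Rm$ is indeed a projective generator of the abelian category $\RmmodLfmhat$ (this is what the appendices of \cite{HoveyStrickland99,BarthelFrankland} record), but it is \emph{not} compact, and more to the point the relative smallness needed for the small object argument in $\RmmodLfmhat$ is not automatic: colimits in $\RmmodLfmhat$ are computed by forming the colimit in $\Rm\modules$ and then applying $\Lfm$, so for a transfinite sequence $\{X_\beta\}$ of $\Lfm$-complete modules one must compare $\mathrm{colim}_\beta\,\Hom_{\RmmodLfmhat}(\Lfm\Rm, X_\beta) = \mathrm{colim}_\beta (iX_\beta)_0$ with $(\Lfm\,\mathrm{colim}_\beta iX_\beta)_0$, and the failure of $i$ (equivalently of $i\Lfm$ as an endofunctor of $\Rm\modules$) to preserve filtered colimits means these need not agree, for any cardinal bound. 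You have not argued that they do, and the paper's remark signals that the authors of \cite{PolWilliamson} could not simply invoke the transfer theorem either and had to verify the model category axioms directly.

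A related, though likely repairable, issue affects your acyclicity step: at limit ordinals a transfinite composition of relative $\Lfm J$-cells is $\Lfm$ of an honest sequential colimit, and since $\Lfm$ does not preserve quasi-isomorphisms in general you cannot argue level-by-level with homology. Your move to chain contractions is the right instinct, since $\Lfm$, being additive, does preserve chain homotopies; but you would still need to check that the contractions assemble compatibly along the transfinite composition before applying $\Lfm$ once at the end, and this requires more care than ``tracking this through the cellular construction.'' The second half of your proposal — identifying the crux as the comparison of $\Lfm X$ with the derived completion $\Lambda_\fm X$ on suitably projective resolutions, and deducing the Quillen equivalence from that — is the correct shape of the argument and matches what the cited result actually needs. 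But as written, the first assertion (existence of the cofibrantly generated model structure) rests on an unverified smallness claim, and that is exactly the point the paper flags as the reason a direct proof is required.
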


We note that  $\Lfm$ does not preserve sequential
colimits, so this requires a direct proof rather than using the
general right-lifting criterion. 

\section{The adelic, separated and L-complete categories}
\label{sec:adsepLk}

Having described two left adjoints  at the nub category, we need to explain how
to this may be extended to the whole cospan. In Section~\ref{subsec:qcmodels} we describe how to equip these categories with relevant model structures.

\subsection{Quasi-coherent diagram categories}
\label{subsec:qcdiagram}
We will name and display the left adjoints, with notation
for the right adjoints following from it, so that if  $f: \cC \lra
\cC'$ is a left adjoint the right adjoint will be called $f^r$.

We start from a cospan of left adjoints, but since we focus on
quasi-coherent objects,  the categories do not
play equal roles, and the notation reflects this. The cospan of left
adjoints is the diagram $\alpha =(\cN \stackrel{v}\lra \cQ
\stackrel{h}\lla \cV)$,   where $h$ and $v$ stand for `horizontal'
and `vertical' and we have the corresponding  category 
$$
\cCal  =\cCal (\cN, h)= \bbGamma \left( \begin{gathered}\xymatrix{
& \cV \ar[d]^{v} \\ 
 \cN  \ar[r]_h& \cQ 
}  \end{gathered} \right) 
$$
of diagrams. The vertical functor $v: \cV\lra \cQ$ will remain
constant throughout the discussion so is not recorded in the
notation.  Given a left adjoint $F: \cN \lra \cN'$ we may form a new
cospan
$$F_*\alpha =(\cN'\stackrel{F^r} \lra \cN \stackrel{h} \lra  \cQ \lla \cV)$$
where the horizontal $h$ has been replaced by the composite $hF^r$. We
note that the horizontal of $F_*\alpha$ is usually not a left
adjoint. Nonetheless, we can consider the category of diagrams
$$
\cCFal =\cCFal (\cN', hF^r) = \bbGamma \left( \begin{gathered}\xymatrix{
&& \cV \ar[d]^{v} \\ 
 \cN'\ar[r]_{F^r}& \cN  \ar[r]_h& \cQ 
}  \end{gathered} \right) 
$$
\subsection{Quasi-coherent pushout along a left adjoint}
\label{subsec:qcpush}
Our main focus is the category 
$$
\cCalqc =\cCalqc( \cN, h)= \bbGamma \left( \begin{gathered}\xymatrix{
& \cV \ar[d]^{v} \\ 
 \cN  \ar@{->}[r]_h|-{\text{\Large \textbullet}}& \cQ 
}  \end{gathered} \right) 
$$
of $\qc$ objects.  Since the base change along $h$ is an isomorphism, we
may  think of a $\qc$ object 
$$
Y =
 \left( \begin{gathered}\xymatrix{
& V \ar@{..>}[d]^{v} \\ 
N \ar@{..>}[r]|-{\text{\Large \textbullet}}& Q 
}  \end{gathered} \right) 
$$
as the nub $N$ together  with the additional structure of a vertex $V$
and a map $i: v(V)\lra h(N)$ in the splicing category $\cQ$ .

We describe how a left adjoint $F: \cN \lra \cN'$ induces comparisons between $\qc$ diagrams
 with different nubs.

\begin{lemma} 
\label{lem:qcpush}
A left adjoint $F:\cN \lra \cN'$ induces a
left adjoint $F_*: \cCal^{qc} \lra \cCFal^{qc}$. If $F$ is the inclusion of a
reflective subcategory, so is $F_*$.
\end{lemma}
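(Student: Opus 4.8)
The plan is to construct $F_*$ and its right adjoint by hand, using the description of a $\qc$ object as a triple (nub, vertex, one comparison map in $\cQ$). Write the given adjunction as $F\dashv F^r$ with unit $\eta\colon\mathrm{id}_{\cN}\Rightarrow F^rF$ and counit $\epsilon\colon FF^r\Rightarrow\mathrm{id}_{\cN'}$, and recall that the horizontal of $F_*\alpha$ is the composite $hF^r$. For a $\qc$ object $Y=(N,V,i\colon v(V)\to h(N))$ of $\cCalqc$ I would set
$$F_*Y:=\bigl(FN,\ V,\ h(\eta_N)\circ i\colon v(V)\lra hF^r(FN)\bigr),$$
which is again $\qc$ because the splice of an object of $\cCFalqc$ is forced to be the image of the nub under $hF^r$, here $hF^r(FN)=h(F^rFN)$. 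Functoriality of $F_*$ is exactly naturality of $\eta$. In the reverse direction I would set, for $Y'=(N',V',j\colon v(V')\to hF^r(N'))$ in $\cCFalqc$,
$$F_*^rY':=\bigl(F^rN',\ V',\ j\bigr),$$
observing that the comparison map $j$ already has target $h(F^rN')$, exactly what is needed for a $\qc$ object of $\cCalqc$ with nub $F^rN'$.

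Next I would check $F_*\dashv F_*^r$ directly. A morphism $F_*Y\to Y'$ consists of $a\colon FN\to N'$ in $\cN'$ and $b\colon V\to V'$ in $\cV$ subject to one commuting square in $\cQ$; transposing $a$ across $F\dashv F^r$ to $\bar a:=F^ra\circ\eta_N\colon N\to F^rN'$ and using naturality of $\eta$ turns that square into precisely the square expressing that $(\bar a,b)$ is a morphism $Y\to F_*^rY'$ in $\cCalqc$. This correspondence is bijective and natural in $Y$ and $Y'$, so $F_*$ is a left adjoint with right adjoint $F_*^r$. One finds in passing that the unit and counit of $F_*\dashv F_*^r$ are computed componentwise: they are identities on the vertex and on the splice, and on the nub they are $\eta$ and $\epsilon$ respectively, the compatibility over $\cQ$ being the relevant triangle identity.

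For the last assertion, ``$F$ is the inclusion of a reflective subcategory'' means (as in Lemma~\ref{lem:sigmapi}) that the counit $\epsilon\colon FF^r\Rightarrow\mathrm{id}_{\cN'}$ is a natural isomorphism, equivalently that $F^r$ is fully faithful. By the componentwise description of the previous paragraph, if $\epsilon$ is invertible then so is the counit of $F_*\dashv F_*^r$, hence $F_*^r$ is fully faithful and $F_*$ exhibits $\cCFalqc$ as (equivalent to) a reflective subcategory of $\cCalqc$; the dual statement with $\eta$ in place of $\epsilon$ holds verbatim. Applied to $F=\sigma$, $F^r=\pi$ this gives that $\sigma_*\colon\cCadqc\to\cCsepqc$ is the reflection onto the separated quasi-coherent diagrams, and similarly for $\ell$.

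The argument is formal, so there is no genuine obstacle; the only thing to be careful about is bookkeeping — keeping track that comparison maps in $\cCFalqc$ are measured against $hF^r$ rather than $h$, and checking that $F_*$ and $F_*^r$ really do land among $\qc$ objects, which is automatic once one notes the splice is always reconstructed from the nub. The fact that $hF^r$ is typically not itself a left adjoint (so $\cCFal$ is not a diagram category of left adjoints) causes no trouble here; it is relevant only for transferring model structures in Section~\ref{subsec:qcmodels}.
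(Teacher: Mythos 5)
Your proposal is correct and follows essentially the same route as the paper: you define $F_*$ by applying $F$ at the nub and post-composing the splice with $h\eta_N$, define the right adjoint by applying $F^r$ at the nub and keeping the splice unchanged, and observe that the unit and counit are computed componentwise (identity at the vertex, $\eta$ resp.\ $\epsilon$ at the nub), so the triangle identities are inherited from those of $F\dashv F^r$. The paper exhibits the unit and counit directly and verifies the triangle identities, whereas you first establish the adjunction by a hom-set bijection obtained by transposing the nub map; these are equivalent formulations of the same check, and your parenthetical derivation of the unit and counit recovers the paper's diagrams. Your reading of ``$F$ is the inclusion of a reflective subcategory'' as meaning the counit of $F\dashv F^r$ is invertible (so $F^r$ is fully faithful and $F$ is the reflector onto $\cN'$ regarded as a subcategory of $\cN$ via $F^r$) matches the usage in Lemma~\ref{lem:sigmapi}, and your conclusion that $\hat\epsilon$ is then invertible is exactly what the componentwise description gives.
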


\begin{proof}
The left adjoint $F_*$ is defined by 
$$F_*Y=
\left( \begin{gathered}\xymatrix{
& V \ar@{..>}[d]^{v} \\
&hN \ar[d]^{h\eta}\\
FN \ar@{..>}[r]|-{\text{\Large \textbullet}}& hF^rFN 
}  \end{gathered} \right) 
$$
where $\eta : N \lra F^r FN$ is the unit of the adjunction. The right
adjoint $F_*^r=F^*$ is defined by 
$$F^*Y' =
\left( \begin{gathered}\xymatrix{
& V'\ar@{..>}[d]^{v} \\
F^rN' \ar@{..>}[r]|-{\text{\Large \textbullet}}& hF^rN'
}  \end{gathered} \right) 
$$
The unit  $\hat{\eta} : Y \lra F^*F_*Y$  of the adjunction is given by the 
diagram 
$$\xymatrix{
&V\ar[r]^=\ar@{..>}[d]&V\ar@{..>}[d]\\
N\ar@{..>}[r]\ar[d]&hN\ar[r]^=\ar[dr]&hN\ar[d]\\
F^rFN\ar@{..>}[rr]&&hF^rFN
}$$
The counit  $\hat{\epsilon} : F_*F^*Y'\lra Y'$  of the adjunction is given by the 
diagram 
$$
\xymatrix{
&&V' \ar[dr]^=\ar@{..>}[dd]&\\
&&&V'\ar@{..>}[ddd]\\
&&hF^rN'\ar[ddr]\ar[d]&\\
FF^rN' \ar@{..>}[rr]\ar[dr]&&hF^rFF^rN'\ar[dr]&\\
&N'\ar@{..>}[rr]&&hF^rN'
}$$
where commutativity of the triangle is given by the triangular identity of the
original adjunction. The triangular identities for
$\etahat$ and $\epshat$  follow directly from the triangular
identities of $\eta$ and $\epsilon$ at the nub.
\end{proof}

\subsection{The separated cospan}
\label{subsec:sepcospan}
The  starting point is the adelic diagram $\alpha =\ad$, so that in
$\cCadqc$,  the nub is a module over the product ring $\prod_\fm \Rm$ where $\Rm =\Lambda_\fm \unit$. In the
expanded notation, 
$$\cCadqc =\cCadqc ((\prod_\fm \Rm ) \modules, L_\gen).$$ 

We now use the left adjoint
$$\sigma: (\prod_\fm \Rm ) \modules \lra \prod_\fm 
(\Rm \modules) $$
from Subsection \ref{subsec:nubsepskeleton}.

\begin{defn}
The category $\cCsep$ is the category of sections of the \emph{separated cospan}: 
$$
\cCsep = \cC_{\sigma_* ad}=\bbGamma \left( \begin{gathered}\xymatrix{
& (L_\gen \unit) \modules_{\cC} \ar[d]^{j_\ast} \\  \prod_\fm
\left[ \Lambda_\fm \unit \modules_\cC \right]  \ar[r]_-{L_\gen \pi} & \left(L_\gen  \prod_\fm \Lambda_\fm \unit \right) \modules_\cC
}  \end{gathered} \right)
$$
Unpacking the definition, the horizontal functor $L_\gen\pi$ takes a collection of modules
$\{M_\fm\}_\fm$ to the module 
$L_\gen \prod_\fm M_\fm$, so that an object of $\cCsep$ is a diagram of the form
$$ X=
\left[ \begin{gathered} \xymatrix{
& U \ar@{..>}[d] \\ \{M_\fm\}_{\fm} \ar@{..>}[r]& P 
} \end{gathered} \right]
$$
Its base change maps are $j_\ast U \to P$ and $L_\gen \prod_\fm M_\fm \to P$.
\end{defn}

Our model is built from the $\qc$ objects $\cCsepqc$ in $\cCsep$. The 
construction of Section \ref{subsec:qcpush} shows that 
the $\sigma \dashv \pi$ adjunction induces an adjunction with left adjoint 
$$\sigma_*: \cCadqc=\cCadqc (L_\gen)\lra \cCsepqc(L_\gen 
\pi)=\cCsepqc. $$

\subsection{The L-complete cospan}
\label{subsec:Lkcospan}
We  use the $L$-complete local skeleton of Subsection \ref{subsec:nubLkskeleton}
and the constructions of Section \ref{subsec:qcpush} to 
replace the nub $\prod_\fm (\Lambda_\fm \unit \modules)$ by a complete 
category, but we need to restrict the categories to the appropriate
context.

We restrict attention to the special case where individual categories 
$\Lambda_\fm (\cCm)$ are algebraic in a strong sense.  

\begin{defn}
We say  that $\cC$ is {\em formally algebraic} if, for each closed 
point $\fm$, $\Lambda_\fm \unit $ is formal in the sense that we can find a Noetherian graded commutative local ring $(R_\fm, \fm)$ and a Quillen equivalence $\Lambda_\fm \unit \modules_\cC \simeq \Rm \modules$.

 Note that we have abused notation by using $\fm$
to refer to both the Balmer prime $\fm \in \spc^{\omega}(\cC)$ on the 
left, and the algebraic prime $\fm \in \spec (\Rm)$ on the 
right. 
\end{defn}

\begin{example}\leavevmode
\begin{itemize}
\item[(i)] The most  obvious example is if $\cC=R\modules$ of (chain complexes of) $R$-modules for a 1-dimensional 
commutative Noetherian domain, so that all the completions come from a 
single ring: in this case $(\Rm,\fm)=(R_\fm^\wedge, \fm)$.  The results are valuable 
even in this case.
\item[(ii)] We will also consider the example when $\cC$ consists of
  (chain complexes of)
quasi-coherent sheaves over a curve $C$. In this case $\fm$ runs
through closed points and $\Lambda_\fm \cO_C\simeq
(\cO_C)_\fm^{\wedge}$ is a skyscraper sheaf, and we may take  $\Rm
=\Gamma (C; (\cO_C)_\fm^{\wedge})$. 
\end{itemize}
\end{example}

As in Subsection \ref{subsec:nubLkskeleton}, we let $\RmmodLfmhat$
denote the category of differential
graded $\Lfm$-complete modules. We  consider the product
$$\ell: \prod_\fm(\Rm \modules) \lra \prod_\fm 
(L_0^\fm-\Rm \modules)$$
 of the individual left adjoints 
$$\Lfm : \Rm \modules \lra  L_0^\fm-\Lambda_\fm \unit\modules. $$

We may now define a  skeleton of the complete model. 
\begin{defn}
Suppose $\cC$  is formally algebraic. We
define the \emph{$L$-complete cospan} $\cC_{\ell_* ad}$ and the associated
category of diagrams
$$
\cCLk = \bbGamma 
\left( \begin{gathered}
\xymatrix{
& (L_\gen \unit) \modules \ar[d]^{j_\ast} \\ 
 \prod_\fm \left[\RmmodLfmhat \right]  \ar[r]_-{L_\gen \ell} &
 (L_\gen  \prod_\fm \Lambda_\fm \Rm)\modules
} 
 \end{gathered} \right)
$$
\end{defn}

Our model is built from the qc objects $\cCLkqc$ in $\cCLk$. The 
construction of Section \ref{subsec:qcpush} shows that the left adjoint
$\ell$ induces a left adjoint 
$$\ell_*: \cCsepqc=\cCsepqc (L_\gen\pi )\lra \cCLkqc(L_\gen \pi \ell) =\cCLkqc. $$

\section{The cellular skeletons}\label{sec:cellularskeletons}
We have explained that given a coreflective inclusion $i: \cS \lra
\cC$ of a subcategory, then if $\cC$ has a cofibrantly generated model
structure with generators in $\cS$ then by \cite{coreflective} the
inclusion is an equivalence. To fill in the vertical equivalences in
the roadmap of Section \ref{sec:roadmap} it remains only to construct
the right adjoints and observe the generators come from the smaller
subcategories.

\subsection{Quasi-coherification}
\label{subsec:Gqc}
 The quasi-coherification
$\Gamma_{qc}X$ of an object
\begin{align*}
X= \left[ 
\begin{gathered} \xymatrix{
& V \ar@{..>}[d] \\ N \ar@{..>}[r] & Q 
} \end{gathered}
 \right]
\end{align*}
of $\cCad$ is defined by
\begin{align*}
\Gamma_{\qc} \left[ 
\begin{gathered} \xymatrix{
& V \ar@{..>}[d] \\ N \ar@{..>}[r] & Q 
} \end{gathered}
 \right]
 &= 
 \left[
 \begin{gathered}
 \xymatrix{
&\textcolor{red}{V'}  \ar[r] \ar@{}[dr]|<<{\lrcorner}  \ar@{..>}@[red][d] & V \ar@{..>}[d] \\ \textcolor{red}{N} \ar@{..>}@[red][r]|-{\textcolor{red}{\text{\Large \textbullet}}} & \textcolor{red}{L_\gen N} \ar@{->}[r] & Q
}
\end{gathered}
 \right]
  = 
  \left[ 
\begin{gathered} \xymatrix{
& V' \ar@{..>}[d] \\ N \ar@{..>}[r]|-{\textcolor{black}{\text{\Large \textbullet}}}  & L_\gen N 
} \end{gathered}
 \right]
\end{align*}
As before,  a solid bullet denotes an isomorphism after base change. In more detail, the dotted arrow from $N$ to $Q$ means we have a map
$L_{\gen}N \lra Q$ and we obtain $V'$ by pulling back along it. The
object $\Gamma_{\qc}$ is highlighted in red. 

The two-coloured display in the definition shows the counit $i\Gqc
X\lra X$. It is also obvious that if $X$ is qc then $\Gqc i X =X$. The
triangular identities are immediate, so that we have an adjunction
$$\xymatrix{
\cCadqc \ar@<1.0ex>^{i}[r]&\cCad .\ar@<1.0ex>^{\Gqc}[l]
}$$

The definition of $\Gqc$ on $\cCsep$ and $\cCLk$ is exactly similar,
and it is easy to see they are also right adjoint to the
inclusion. This gives the bottom half of the road map in Subsection \ref{subsec:roadmap}.

\subsection{Extendification}
The definition of $\Ge$ is very much like that of $\Gqc$:
\begin{align*} 
\Gamma_{\mathrm{e}} \left[
\begin{gathered} \xymatrix{
& V \ar@{..>}[d] \\ N \ar@{..>}[r] & Q
} \end{gathered}
 \right]
 =
 \left[
\begin{gathered} \xymatrix{
& \textcolor{red}{V} \ar@{..>}@[red][d]|-{\textcolor{red}{\text{\Large \textbullet}}}  \\ \textcolor{red}{N'} \ar@{..>}@[red][r] \ar[d] & \textcolor{red}{j_\ast V} \ar@{->}[d] \\ N \ar@{..>}[r] & Q
} \end{gathered}
 \right]
   = 
  \left[ 
\begin{gathered} \xymatrix{
& V \ar@{..>}[d]|-{\textcolor{black}{\text{\Large \textbullet}}} \\ N' \ar@{..>}[r]  & j_\ast V 
} \end{gathered}
 \right]
 \end{align*}
Once again, $N'$ is defined as a pullback and $\Gamma_{\mathrm{e}}X$ is
highlighted in red.  Once again, the two coloured display in the definition shows the counit $i\Ge
X\lra X$. It is also obvious that if $X$ is extended then $\Ge i X =X$. The
triangular identities are immediate, so that we have an adjunction
$$\xymatrix{
\cCade\ar@<1.0ex>^{i}[r]&\cCad .\ar@<1.0ex>^{\Gqc}[l]
}$$

\subsection{The quasi-coherification extendification  functor}
In order to construct an adjoint to the inclusion $\cCadqce \lra
\cCadqc$ we need an additional condition. 

The Bousfield localization $L_\gen$  is smashing in the sense that 
$L_\gen X\simeq L_\gen  \unit \tensor X$, and  it follows that multiplication gives a weak 
equivalence $L_\gen \unit \tensor L_\gen \unit \simeq L_\gen \unit$.  When forming 
algebraic models, we may obtain a leaner model by requiring this to hold more 
strictly. 

\begin{defn}\label{defn:strictly_smashing}
We say that a localization functor $L$ is {\em strictly smashing} if 
the multiplication map $L\unit \tensor L\unit \lra L\unit$ is an 
isomorphism and $L\unit$ is flat. 
\end{defn}

\begin{example}\leavevmode
\label{eg:multclosed}
\begin{itemize}
\item [(i)] If $\cC$ is the category of (chain complexes of) $R$-modules for a commutative ring $R$ and 
$L$ is localization to invert a multiplicatively closed set, then $L$
is strictly smashing. 
\item[(ii)] If $\cC$ is the category of (chain complexes of) quasi-coherent sheaves on a curve $C$
the generic localization
$L_\gen$ is passage to stalks over the generic point, and this is
also strictly smashing. 
\end{itemize}
\end{example}

\begin{defn}
Let $\cC$ be a 1-dimensional Noetherian model category. We will say that $\cC$ has {\em strict generization} if  $L_\gen$ is strictly smashing. 
\end{defn}

All the examples we have in mind  are based on categories of 
differential graded modules, so they are simplicial and have strict 
generization essentially as in Example \ref{eg:multclosed}.

\begin{lemma}
\label{lem:Gammaepreservesqc}
If we have strict generization, then the functor  $\Ge$ preserves $\qc$-modules. Hence 
$\Gqce=\Ge \Gqc$ is the right adjoint to the inclusion $i:\cCadqce
\lra \cCad$.  
\end{lemma}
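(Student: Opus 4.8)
The plan is to verify directly that $\Ge$ sends a $\qc$ object to a $\qc$ object, using the strict-smashing hypothesis on $L_\gen$; once that is known, the composite $\Gqce = \Ge\Gqc$ lands in $\cCadqce$, and since $\Gqc$ is right adjoint to $i\colon \cCadqc\lra \cCad$ and $\Ge$ restricts to a right adjoint $\cCadqce\lra\cCadqc$, the composite is right adjoint to the full inclusion $\cCadqce\lra\cCad$, as desired.

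So let $X = (\xymatrix{N \ar@{..>}[r]|-{\text{\Large\textbullet}} & L_\gen N & \ar@{..>}[l] V})$ be a $\qc$ object of $\cCad$; I must show $\Ge X$ is again $\qc$. Unwinding the definition of $\Ge$, we have $\Ge X = (\xymatrix{N' \ar@{..>}[r] & j_\ast V & \ar@{..>}[l] V})$ with the vertical base change an isomorphism and $N' = \fibre(N \oplus j_\ast V \to L_\gen N)$ (the pullback of $N \to L_\gen N \leftarrow j_\ast V$, the latter map being the $\qc$ structure map $L_\gen N\to Q = L_\gen N$ composed with $V$'s splicing data — here since $X$ is $\qc$, $Q = L_\gen N$). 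The content to check is that the horizontal base change map $L_\gen N' \lra j_\ast V$ is an isomorphism. Because $L_\gen$ is strictly smashing, $L_\gen(-) = L_\gen\unit \tensor (-)$ with $L_\gen\unit$ flat, so applying $L_\gen\unit\tensor(-)$ to the pullback square defining $N'$ preserves the pullback (flatness makes $L_\gen$ exact) and hence $L_\gen N' = L_\gen\unit \tensor N'$ is the pullback of $L_\gen N \to L_\gen L_\gen N \leftarrow L_\gen j_\ast V$. Now the map $L_\gen N \to L_\gen L_\gen N$ is an isomorphism (idempotence of $L_\gen$, sharpened to an isomorphism by strict smashing, as noted in the paragraph before Definition~\ref{defn:strictly_smashing}: multiplication $L_\gen\unit\tensor L_\gen\unit\to L_\gen\unit$ is an isomorphism), so the pullback collapses and $L_\gen N' \xrightarrow{\;\cong\;} L_\gen j_\ast V$. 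Finally $L_\gen j_\ast V \cong j_\ast V$ since $j_\ast V$ is already a module over $L_\gen\prod_\fm\Lambda_\fm\unit$, hence $L_\gen$-local, and strict smashing makes the localization map an isomorphism on such objects. Chasing the identifications, the resulting iso $L_\gen N' \xrightarrow{\cong} j_\ast V$ is precisely the horizontal base change map of $\Ge X$, so $\Ge X$ is $\qc$.

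For the adjunction statement: $\Ge$ as defined on $\cCad$ is right adjoint to the inclusion $\cCade\lra\cCad$; restricting its source to $\qc$ objects and its target — by the computation just done — to $\qce$ objects, one checks the unit and counit are unchanged, so $\Ge\colon \cCadqc\lra\cCadqce$ is right adjoint to $i\colon\cCadqce\lra\cCadqc$. Composing with the adjunction $i\dashv\Gqc$ between $\cCadqc$ and $\cCad$ from Subsection~\ref{subsec:Gqc}, and using that a composite of right adjoints is right adjoint to the composite of the left adjoints, gives that $\Gqce = \Ge\Gqc$ is right adjoint to the inclusion $\cCadqce\lra\cCad$.

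The main obstacle is the middle step: being careful that "strictly smashing" — which gives $L_\gen\unit$ flat and the multiplication map a genuine isomorphism — is exactly strong enough to turn the homotopy-level facts ($L_\gen$ smashing, idempotent, $j_\ast V$ being $L_\gen$-local) into the strict isomorphisms needed to conclude $L_\gen N' \cong j_\ast V$ on the nose rather than just up to weak equivalence. Flatness is what lets $L_\gen\unit\tensor(-)$ commute with the pullback defining $N'$; without it one would only get a homotopy pullback and the argument would prove $\Ge X$ is $\wqce$, not $\qce$. Everything else (the triangular identities, the bookkeeping of which map is which) is routine diagram-chasing of the kind already carried out for $\Gqc$ and $\Ge$ individually.
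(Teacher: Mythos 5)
Your proof is correct and follows essentially the same route as the paper's: apply $L_\gen\unit\tensor(-)$ to the pullback square defining $N'$, use flatness to keep it a pullback, observe that $L_\gen N\to L_\gen L_\gen N$ and $L_\gen j_\ast V \to j_\ast V$ are isomorphisms by strict smashing, and conclude $L_\gen N'\cong j_\ast V$. The paper states this more tersely (``we apply $L_\gen$ to the diagram \ldots\ and note that pullbacks preserve isomorphisms''), but you have simply made the same argument explicit.
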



\begin{proof}
It is clear that $\Gamma_{\qc}$ lands in the category where the horizontal base change map is an isomorphism. As such, it is enough to assume that we have $X \in \cCadqc$ and that applying $\Gamma_{\mathrm{e}}$ preserves the property of being $\qc$. We set
$$X =  \left[ 
\begin{gathered} \xymatrix{
& V \ar@{..>}[d] \\ N \ar@{..>}[r]|-{\text{\Large \textbullet}} & L_\gen N
} \end{gathered}
 \right]$$
 Then
 $$\Gamma_{\qce} X = \Gamma_{\mathrm{e}} X =  \left[
\begin{gathered} \xymatrix{
& \textcolor{red}{V} \ar@{..>}@[red][d]|-{\textcolor{red}{\text{\Large \textbullet}}}  \\ \textcolor{red}{N'} \ar@{..>}@[red][r] \ar[d] & \textcolor{red}{j_\ast V} \ar[d] \\ N \ar@{..>}[r]|-{\text{\Large \textbullet}} & L_\gen N
} \end{gathered}
 \right]$$
 We need to show that $L_\gen N' \cong j_\ast V$, but this follows
 from the fact that $L_{\gen}$ is strictly smashing: 
 we  apply $L_\gen$ to the diagram to view it in the category of $L_\gen
 \unit$-modules and note that pullbacks preserve isomorphisms. 
\end{proof}

\subsection{The idempotent extendification functor}
\label{subsec:Gie}

The remaining torsion functor is a little more complicated. 

   \begin{lemma}
\label{lem:Gqcie}
   The inclusion $\cCsepie \lra \cCsep$ has a right adjoint $\Gie: \cCsep
   \lra \cCsepie$, and if we have strict generization,    $\Gie$  restricts to a functor 
   $\Gie: \cCsepqc \lra \cCsepqcie$ right adjoint to the inclusion
   $\cCsepqcie\lra \cCsepqc$.
\end{lemma}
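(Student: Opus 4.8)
The plan is to follow the template of the extendification functor $\Ge$ together with the argument of Lemma~\ref{lem:Gammaepreservesqc}.

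First I would construct $\Gie$ on all of $\cCsep$. Write an object of $\cCsep$ as a diagram with vertex $U$ (an $L_\gen\unit$-module), nub $\{M_\fm\}$, splice $P$, and base-change maps $v\colon j_\ast U\to P$ and $h\colon L_\gen\prod_\fm M_\fm\to P$. The idea is to force the idempotent pieces of $v$ to become isomorphisms while retaining a map back to $X$. Using the $\Pi$-completions $P^\Pi=\prod_\fm e_\fm P$ and $(j_\ast U)^\Pi=\prod_\fm e_\fm j_\ast U$, with the completion unit $P\to P^\Pi$ and the map $(j_\ast U)^\Pi\to P^\Pi$ induced by $v$, set the new splice $\widetilde P:=P\times_{P^\Pi}(j_\ast U)^\Pi$. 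Because passage to $e_\fm$ is localization at an idempotent --- hence exact, and carrying a product over the closed points to its $\fm$-th entry --- one computes $e_\fm\widetilde P\cong e_\fm j_\ast U$ with the resulting vertical map the identity on $e_\fm$-pieces, so the new object is $\ie$. For $h$ to lift to $\widetilde P$ the nub must be modified as well: put $\widetilde M_\fm:=M_\fm\times_{e_\fm P}e_\fm j_\ast U$, pulling back along $M_\fm\to L_\gen M_\fm\xrightarrow{e_\fm h}e_\fm P$ and along $e_\fm v$. The universal properties of the pullbacks, together with naturality of the $\Pi$-completion unit, then supply the lifted base-change maps of $\Gie X:=(U,\{\widetilde M_\fm\},\widetilde P)$; the counit $\Gie X\to X$ is the identity on $U$ together with the projections $\widetilde M_\fm\to M_\fm$ and $\widetilde P\to P$; and when $X$ is already $\ie$ both pullbacks are along isomorphisms, so $\Gie X=X$. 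The triangular identities reduce to those of the $\sigma\dashv\pi$ adjunction at the nub, which gives $i\dashv\Gie$.

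Next I would show $\Gie$ preserves quasi-coherence when $L_\gen$ is strictly smashing. If $X$ is $\qc$ then $P=L_\gen\prod_\fm M_\fm$ and $e_\fm P=L_\gen M_\fm$, so $\widetilde M_\fm=M_\fm\times_{L_\gen M_\fm}e_\fm j_\ast U$; since products commute with pullbacks, $\prod_\fm\widetilde M_\fm=(\prod_\fm M_\fm)\times_{P^\Pi}(j_\ast U)^\Pi$. Strict generization makes $L_\gen\unit$ flat, hence the point-set functor $L_\gen$ exact on the nub category, so it carries this pullback to $P\times_{L_\gen P^\Pi}L_\gen(j_\ast U)^\Pi$. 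It then remains to observe that $P^\Pi=\prod_\fm L_\gen M_\fm$ and $(j_\ast U)^\Pi=\prod_\fm e_\fm j_\ast U$ are already $L_\gen$-local, so that localizing them does nothing: each factor is a module over some $L_\gen\Lambda_\fm\unit$, hence supported over the generic point and $L_\gen$-local, and --- because $L_\gen$ is a finite localization --- $L_\gen$-locality is cut out by vanishing of $[K_\fm,-]_\ast$ for the small Koszul objects $K_\fm$, a condition inherited by arbitrary products. Therefore $L_\gen\prod_\fm\widetilde M_\fm\cong P\times_{P^\Pi}(j_\ast U)^\Pi=\widetilde P$, so $\Gie X$ is $\qc$, and by the first step it is $\qcie$. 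The adjunction between $\cCsepqcie$ and $\cCsepqc$ is then automatic from $i\dashv\Gie$ on $\cCsep$, because $\cCsepqcie\hookrightarrow\cCsepie$ and $\cCsepqc\hookrightarrow\cCsep$ are full subcategories.

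The main obstacle is the second step. A priori $L_\gen$ does not commute with the infinite products appearing in the $\Pi$-completions, so one cannot simply move it past them; the point is that the only products occurring are products of $L_\gen$-local modules over the completions $\Lambda_\fm\unit$, and that $L_\gen$-locality --- being detected by vanishing of maps out of the small generators of the $L_\gen$-acyclics --- is stable under products, which together with the point-set exactness supplied by strict generization is precisely what lets $\Gie$ descend to the quasi-coherent subcategories. A smaller point in the first step is that imposing the $\ie$ condition on the splice alone destroys the horizontal base-change map, which is why the nub must be pulled back as well.
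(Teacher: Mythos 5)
Your construction is genuinely different from the paper's, most sharply at the splice. The paper forms the nub of $\Gie X$ by taking the pullback $M' = j_\ast U\times_P \prod_\fm M_\fm$ in the nub category and then passing to idempotent pieces $M'_\fm = e_\fm M'$; exactness of $e_\fm$ shows this agrees with your $\widetilde M_\fm = M_\fm\times_{e_\fm P}e_\fm j_\ast U$, so the nubs coincide. At the splice, however, the paper's construction produces $P'$ as a \emph{pushout} $j_\ast U\cup_{M'}\prod_\fm M'_\fm$ (a colimit fed by the back-left corner), whereas you take the \emph{pullback} $\widetilde P = P\times_{P^\Pi}(j_\ast U)^\Pi$ (a limit built directly from $X$). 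Your version is tuned to the universal property of a right adjoint: a map $Q\to\widetilde P$ is a map $Q\to P$ together with a compatible $Q\to(j_\ast U)^\Pi$, and for $Y$ idempotent extended the latter is forced because $Q\to P^\Pi$ factors through $Q^\Pi\cong (j_\ast W)^\Pi$. The pullback description also makes the $\ie$-property of $\Gie X$ and the fact that $\Gie$ fixes $\ie$ objects immediate, and it makes the preservation of quasi-coherence transparent: products commute with pullbacks, so $\prod_\fm\widetilde M_\fm\cong(\prod_\fm M_\fm)\times_{P^\Pi}(j_\ast U)^\Pi$, and flatness of $L_\gen\unit$ (from strict generization) carries this pullback to $\widetilde P$ once one knows that $L_\gen$ is the identity on $P^\Pi$ and $(j_\ast U)^\Pi$.

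For that last point your argument via stability of $L_\gen$-locality under products is correct, but it is more than you need: $P^\Pi=\prod_\fm e_\fm P$ and $(j_\ast U)^\Pi=\prod_\fm e_\fm j_\ast U$ are products of modules over $L_\gen\prod_\fm\Lambda_\fm\unit$, hence are themselves $L_\gen\unit$-modules, and strict smashing gives $L_\gen\unit\otimes M\cong M$ for any $L_\gen\unit$-module $M$ directly. (Invoking the finite-localization characterisation of locality mixes a derived criterion into a point-set computation; the flatness and idempotence of $L_\gen\unit$ already do the work.) The one place your sketch is thinner than it should be is the verification of the triangular identities: these do not simply reduce to $\sigma\dashv\pi$, since the pullbacks at nub and splice contribute, and one must check that the constructed map $Q\to\widetilde P$ and the forced maps $N_\fm\to e_\fm j_\ast U$ are genuinely determined by the datum $iY\to X$ (the $\ie$-hypothesis on $Y$ is used precisely here). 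Spelling that out would complete the proof; as a route to the lemma it is sound, and cleaner to run than the paper's pushout.
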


\begin{proof}
We suppose given an object $X$ as above and define 
$$\Gie X=
\left[ \begin{gathered} \xymatrix{
& U \ar@{..>}[d] \\ \{M'_\fm\}_{\fm} \ar@{..>}[r]& j_*U
} \end{gathered} \right]
$$
where the factors $M'_\fm$ are defined by the diagram
$$
\xymatrix{
&&U\ar@{=}[dr] \ar@{..>}[dd]&\\
&&&U\ar[dd]\\
M'\ar[dd]\ar[dr]\ar[rr]^\beta &&j_*U\ar[dr]\ar@{..>}[dd]&\\
&\prod_\fm M_\fm \ar@{=}[dd]\ar[rr]^\alpha && P\ar@{=}[dd]\\
\prod_\fm M'_\fm \ar[dr] \ar[rr]^<<<<<<\gamma&&P'\ar[dr]&\\
&\prod_\fm M_\fm \ar[rr]&&P
}$$
where $M'$ is a pullback of $j_*U$ and $\prod_\fm M_\fm$
 and 
$M'_\fm=e_\fm M'$, and $P'$ is defined as a pushout. 
The pullback and pushout are taken in $\Lambda_\fm \unit$-modules,
with $P'$ local because $L_\gen$ preserves pushouts.  
Because the back left hand vertical is ie, so is the
lower back left hand vertical, so $\Gie X$ is indeed idempotent extended. 
It is clear that $\Gie$ is the identity on idempotent extended
objects, so the unit of the adjunction is the identity. The counit
$i\Gie X \lra X$ is displayed in the diagram above, and the triangular
identities are easily checked.  

Furthermore, if the base change of $\alpha$ is an isomorphism, it
follows that the base change of  $\beta$ is an isomorphism
by and hence that the base change of $\gamma$ is an isomorphism. 
\end{proof}

\section{The skeletal Quillen equivalences} 
\label{sec:skeletonequiv}

Referring to the categories in the Roadmap (Subsection 
\ref{subsec:roadmap}), we first need to consider the existence of model 
structures, and then establish the Quillen equivalences.

\subsection{Cellular skeletons of the adelic model}
By Lemma \ref{lem:Gammaepreservesqc},  $\cCadqce$ is a coreflective
subcategory of the cofibrantly generated model category $\cCad$. To
see that it admits a left-lifted model
structure it suffices to show  the generating cofibrations and acyclic cofibrations
are also $\qce$ objects \cite{coreflective}.

\begin{lemma}\label{lem:adwisequivstrong}
For  a weakly qce object $X$,  the counit $\Gamma_{\qce} X 
\xrightarrow{\simeq} X$ is an equivalence in $R_{\wqce} \cCad$, so 
cellularization with respect to $\wqce$ coincides with that for 
$\qce$. 

For  a weakly qc object $X$,  the counit $\Gamma_{\qc} X 
\xrightarrow{\simeq} X$ is an equivalence in $R_{\wqc} \cCad$, so 
cellularization with respect to $\wqc$ coincides with that for $\qc$. 
\end{lemma}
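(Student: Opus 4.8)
The plan is to prove the two statements in parallel, treating the $\wqce$ case in full and noting that the $\wqc$ case is the strictly easier sub-case obtained by dropping $\Ge$. The point to establish is that, for $X$ weakly $\qce$, the counit $\Gqce X\to X$ of the coreflection $i\dashv\Gqce$ is already an \emph{objectwise} weak equivalence in $\cCad$; granting this, it is a weak equivalence in the injective model structure, hence \emph{a fortiori} a weak equivalence in the cellularization $R_{\wqce}\cCad$, and the same reasoning with $\wqc$ and $\Gqc$ in place of $\wqce$ and $\Gqce$ settles the second statement.

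Since $\Gqce=\Ge\Gqc$, I would treat the two factors in turn. Recall that $\Gqc$ takes an object with nub $N$, splicing object $Q$ and vertex $V$ to the object with the same nub $N$, new splicing object $L_\gen N$, and vertex $V'$ obtained by pulling $V$ back along the base change map $L_\gen N\to Q$, while $\Ge$ replaces the splicing object by $j_\ast V$ and the nub by a pullback. Unwinding the definitions, weak quasi-coherence of $X$ says that $L_\gen N\to Q$ is a weak equivalence and weak extendedness says that $j_\ast V\to Q$ is one. Reading the defining pullback as a homotopy pullback, $V'\to V$ is then a weak equivalence, so the counit $\Gqc X\to X$ is objectwise a weak equivalence. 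Moreover $\Gqc X$ is again weakly $\qce$: its horizontal base change map is the identity of $L_\gen N$, and since the counit is a map of diagrams, two-out-of-three forces its vertical base change map $j_\ast V'\to L_\gen N$ to be a weak equivalence too. Applying $\Ge$ to $\Gqc X$ then replaces the nub $N$ by its homotopy pullback along $j_\ast V'\to L_\gen N$, so the counit $\Gqce X=\Ge\Gqc X\to\Gqc X$ is again objectwise a weak equivalence; composing, $\Gqce X\to X$ is objectwise a weak equivalence, as wanted. For the $\wqc$ statement only $\Gqc$ is used, and only the horizontal base change map of $X$ --- the very map one pulls back along to form $\Gqc$ --- need be a weak equivalence.

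Finally, to account for the clause ``cellularization with respect to $\wqce$ coincides with that for $\qce$'' (and similarly for $\wqc$), I would note that $\qce\subseteq\wqce$ while the counit just produced exhibits each weakly $\qce$ object $X$ as weakly equivalent in $\cCad$ to the genuinely $\qce$ object $\Gqce X$; hence $\qce$ and $\wqce$ generate the same localizing subcategory of $\operatorname{Ho}(\cCad)$. As $\wqce$ is generated by the set $\cGad$ of small objects (Remark~\ref{rem:cGad}) and $\qce$ is correspondingly generated by the small set $\Gqce\cGad$, the two cellularizations agree, and likewise for $\wqc$ via $\Gqc\cGad$. The one genuinely technical point --- and the main obstacle --- is the passage from the categorical pullbacks that \emph{define} $\Gqc$ and $\Ge$ to the homotopy pullbacks used above; this is handled by the right properness of $\cC$ (inherited by the module categories and by the injective model structure on $\cCad$), after first factoring the relevant base change maps as trivial cofibrations followed by fibrations --- equivalently, replacing $X$ by an injectively fibrant object. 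Everything else is formal.
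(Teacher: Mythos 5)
Your argument follows exactly the same route as the paper's: factor the counit as $\Gqce X \to \Gqc X \to X$ and check each step is an objectwise weak equivalence, using weak quasi-coherence (resp.\ weak extendedness, established on $\Gqc X$ by two-out-of-three) together with properness to control the pullbacks defining $\Gqc$ and $\Ge$. The paper says almost exactly this, only more tersely --- it dispatches the homotopy-pullback question with ``$\cC$ is assumed to be proper'' and, in the $\Ge$ step, with ``pulling back along an isomorphism''; your version is a welcome unpacking of the same idea. Two small remarks. First, your fibrant-replacement fix ($X \rightsquigarrow X'$) needs the additional observation that $\Gqce X \to \Gqce X'$ is itself a weak equivalence so that the claim for $X'$ descends to $X$; this again falls out of right properness applied to the defining pullbacks, but it is a step worth writing down since replacing $X$ a priori changes the question. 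Second, your final paragraph on why the two cellularizations agree (using Remark~\ref{rem:cGad} and the images $\Gqce\cGad$, $\Gqc\cGad$) spells out a consequence the paper records only in passing; the logic is sound.
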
 

\begin{proof}
We will provide a chain of weak equivalences $\Gamma_{\qce} X \xrightarrow{\simeq} \Gamma_{\qc} X \xrightarrow{\simeq} X$. We start by fixing our object of interest
$$
X = \left[ \begin{gathered} \xymatrix{
& V \ar@{..>}[d]|-{\text{\Large $\circ$}} \\ N \ar@{..>}[r]|-{\text{\Large $\circ$}} & Q
} \end{gathered} \right] \in \cCadwqce
$$
First we show that there is a weak equivalence
$$ \Gamma_{\qc}X=\left[
 \begin{gathered}
 \xymatrix{
&\textcolor{red}{V'}  \ar[r] \ar@{}[dr]|<<{\lrcorner}
\ar@{..>}@[red][d]|-{\textcolor{red}{\text{\Large $\circ$}}}
 & V \ar@{..>}[d]|-{\textcolor{black}{\text{\Large $\circ$}}}\\ \textcolor{red}{N}
\ar@{..>}@[red][r]|-{\textcolor{red}{\text{\Large \textbullet}}}
 & \textcolor{red}{L_\gen N} \ar[r]^{\simeq} & Q
}
 \end{gathered}
 \right]\xrightarrow{\simeq} X.$$
There is no change at the  $N$ vertex, and by assumption of $X$ being a weak $\qce$-diagram
the map $L_\gen N \to Q$ is a weak equivalence. Therefore we are left
to show that the map $V' \to V$ is a weak equivalence. However this is
the base change of the weak equivalence $L_{\gen}N \stackrel{\simeq} \lra Q$ and the 
model category $\cC$ is assumed to be proper. 

Now we show that there is a weak equivalence $\Gamma_{\qce}X \to \Gamma_{\qc}X$. The object $\Gamma_{\qce}X$ is as follows
$$
\left[
\begin{gathered} \xymatrix{
& \textcolor{red}{V'}
\ar@{..>}@[red][d]|-{\textcolor{red}{\text{\Large \textbullet}}}  \\ 
\textcolor{red}{N'} \ar@{..>}@[red][r]|-{\textcolor{red}{\text{\Large
      \textbullet}}} \ar@{..>}[d] & \textcolor{red}{j_\ast V'}
\ar@{..>}[d]|-{{\text{\Large $\circ$}}}\\
 N \ar@{..>}[r]|-{\text{\Large \textbullet}} & L_\gen N
} \end{gathered}
 \right]
$$
There is no change at the $V'$ vertex, so we need to check at the other two points.
We must show the maps  $N' \to N$ and $j_\ast V' \to L_\gen N$ are
weak equivalences. As the $\Gamma_{\qc}(X)$ is weakly extended, we
have that $j_\ast V' \to L_\gen N$ is a weak equivalence. It then
follows that $N' \to N$ is also a weak equivalence as we are pulling
back along an isomorphism.
\end{proof}

\begin{cor}[Adelic Cellular Skeleton Theorem]\label{cor:adcellskel}
Let $\cC$ be a 1-dimensional Noetherian model category admitting
strict generization. Then inclusions give Quillen equivalences
$$\cC \simeq_QR_{\wqce} \cCad \simeq_Q R_{\we} \cCadqc \simeq_Q \cCadqce.$$ 
We refer to $\cCadqce$ as the cellular skeleton of the adelic model. 
\end{cor}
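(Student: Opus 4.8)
The plan is to establish the three Quillen equivalences in the chain
$$\cC \simeq_Q R_{\wqce} \cCad \simeq_Q R_{\we} \cCadqc \simeq_Q \cCadqce$$
by working from left to right. The first equivalence $\cC \simeq_Q R_{\wqce}\cCad$ is exactly Theorem~\ref{equiv2} (the adelic model, via Remark~\ref{rem:cGad}), so nothing new is needed there. The real work is in the remaining two equivalences, both of which are instances of the coreflective cellular skeleton principle: if $i\colon \cS \hookrightarrow \D$ is a coreflective inclusion with $\D$ cofibrantly generated and the generating (acyclic) cofibrations lying in $\cS$, then $\cS$ carries a left-lifted model structure Quillen equivalent to $\D$ (\cite{coreflective}).

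First I would set up the middle category. By Lemma~\ref{lem:Gammaepreservesqc}, under strict generization $\Gamma_{\qce} = \Gamma_{\mathrm{e}}\Gamma_{\qc}$ is a right adjoint to the inclusion $\cCadqce \hookrightarrow \cCad$, and similarly $\Gamma_{\qc}$ is right adjoint to $\cCadqc \hookrightarrow \cCad$; in both cases the inclusion preserves colimits so these are genuinely coreflective subcategories. To apply the skeleton principle one must check that the generating cofibrations and acyclic cofibrations of the relevant cellularized model structure are $\qc$ (resp.\ $\qce$) objects. Here is where I would use the description of the cofibrant generators worked out in Section~\ref{subsec:wqc} and Remark~\ref{rem:cGad}: the generating small objects for $R_{\wqc}\cCad$ can be taken to be $\qc$ diagrams (indeed the explicit generator $\xymatrix{\prod_\fm \Lambda_\fm \unit \ar@{..>}[r] & L_\gen \prod_\fm \Lambda_\fm \unit & \ar@{..>}[l] 0}$ and the images $\cGad$ are already $\qc$), and likewise the generators for $R_{\wqce}\cCad = R_{\cGad}\cCad$ are $\qce$. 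Hence the left-lifted model structures on $\cCadqc$ (which I would identify with $R_{\we}\cCadqc$, since cellularizing by acyclics of the ambient cellularization) and on $\cCadqce$ exist, and the inclusions are Quillen equivalences.

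The one genuinely substantive ingredient — and the step I expect to be the main obstacle — is reconciling the cellularization bookkeeping: one must check that $R_{\we}\cCadqc$ really is the left-lift of $R_{\wqc}\cCad$ and that $\cCadqce$ is the left-lift of $R_{\wqce}\cCad$, i.e.\ that passing to the coreflective subcategory and then cellularizing gives the same thing as cellularizing and then restricting. This is precisely what Lemma~\ref{lem:adwisequivstrong} is for: it shows the counit $\Gamma_{\qce}X \xrightarrow{\simeq} X$ (resp.\ $\Gamma_{\qc}X \xrightarrow{\simeq} X$) is a weak equivalence in $R_{\wqce}\cCad$ (resp.\ $R_{\wqc}\cCad$) for weakly $\qce$ (resp.\ weakly $\qc$) objects $X$, so that cellularizing at $\wqce$ agrees with cellularizing at the strict class $\qce$, and similarly for $\wqc$ versus $\qc$. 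Combining this with the observation that the ambient cellularization $R_{\wqce}\cCad$ equals $R_{\we}R_{\wqc}\cCad$ (cellularizing by a subclass of acyclics in two stages) yields
$$R_{\wqce}\cCad = R_{\we}R_{\wqc}\cCad \simeq_Q R_{\we}\cCadqc \simeq_Q \cCadqce,$$
where the first Quillen equivalence is the coreflective skeleton argument applied to $\cCadqc \hookrightarrow \cCad$ (and then cellularizing the acyclics $\we$), and the second is the coreflective skeleton argument for $\cCadqce \hookrightarrow \cCadqc$, using $\Gamma_{\mathrm{e}}$ restricted as in Lemma~\ref{lem:Gammaepreservesqc}. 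Finally, composing with Theorem~\ref{equiv2} gives the full chain, and $\cCadqce$ is by definition the cellular skeleton of the adelic model.
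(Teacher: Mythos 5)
Your proposal follows essentially the same route as the paper: the first equivalence is Theorem~\ref{equiv1}/\ref{equiv2}, the coreflectivity of $\cCadqc$ and $\cCadqce$ comes from Subsection~\ref{subsec:Gqc} and Lemma~\ref{lem:Gammaepreservesqc}, the model structures are left-lifted along the inclusions via~\cite{coreflective}, and Lemma~\ref{lem:adwisequivstrong} reconciles the weak and strict cellularizations. The only cosmetic difference is that you cite Theorem~\ref{equiv2} and Remark~\ref{rem:cGad} for the first step where the paper cites Theorem~\ref{equiv1}, and you spell out the two-stage cellularization $R_{\wqce} = R_{\we}R_{\wqc}$ a bit more explicitly; both are the intended reading.
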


\begin{proof} The adelic model is Theorem \ref{equiv1}. 
By Subsection \ref{subsec:Gqc} Lemma \ref{lem:Gammaepreservesqc}, $\cCadqc$ and $\cCadqce$
are coreflective subcategories of  $\cCad$.  By Lemma \ref{lem:adwisequivstrong}
 $\cCadqc$ and $\cCadqce$ have model structures 
left-lifted from $\cCad$. By \cite{coreflective} these are cellular skeletons
\end{proof}

\subsection{Model structures on qc-diagram categories} 
\label{subsec:qcmodels}
We consider model categories in the context of the types of categories appearing in Subsections \ref{subsec:qcdiagram} and \ref{subsec:qcpush}. 
Suppose $\cN, \cV, \cQ$ are cofibrantly
generated model categories. We give the diagram category $\cCal$ the diagram-injective model 
structure, cellularize at $R_\wqc$, and give $\cCalqc$ the left-lifted model structure. 

 Since $hF^r$ is not a left adjoint, it is not clear that the entire category of sections $\cCFal$ admits a 
diagram-injective model structure. The main point of this subsection is to observe that the subcategory $\cCFalqc$  of quasi-coherent objects does admit a model structure 
right-lifted along $F^*$ from $\cCalqc$. 

\begin{lemma}
If  $\cN'$ has cylinders, admits a model structure right-lifted  along $F^r$ from 
$\cN$, and the functor $v$ is faithfully flat, then $\cCFalqc$ admits a model structure right-lifted  along $F^*$ from 
$\cCalqc$. In this model structure cofibrant objects have cofibrant
nubs and fibrant objects have fibrant nubs. 
\end{lemma}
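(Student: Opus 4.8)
The plan is to obtain the model structure on $\cCFalqc$ by the transfer (Kan recognition) criterion, applied to the adjunction $F_*\dashv F^*$ of Lemma~\ref{lem:qcpush}, where $\cCalqc$ carries the cofibrantly generated model structure constructed above (left-lifted from $R_{\wqc}\cCal$), with generating cofibrations $I$ and generating acyclic cofibrations $J$. Taking $F_*I$ and $F_*J$ as the candidate generating (acyclic) cofibrations of $\cCFalqc$, and declaring a map a weak equivalence or fibration exactly when $F^*$ sends it to one in $\cCalqc$, one must verify: that $\cCFalqc$ is bicomplete; that $F_*I$ and $F_*J$ permit the small object argument; and the acyclicity condition, namely that $F^*$ carries every relative $F_*J$-cell complex to a weak equivalence of $\cCalqc$.

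First I would settle bicompleteness. Colimits in $\cCFalqc$ are formed pointwise on the nub (in $\cN'$) and vertex (in $\cV$): the horizontal base-change map of the colimit is induced because $v$ is a left adjoint and so preserves colimits, and quasi-coherence is retained. Limits are formed by first taking the pointwise limit in the ambient section category $\cCFal$ --- these always exist, since the diagram shape is a cospan ending in a sink, so only the universal property at $\cQ$ is invoked --- and then applying the quasi-coherification functor $\Gamma_{\qc}$ (defined for $\cCFal$ exactly as in Subsection~\ref{subsec:Gqc}), which is right adjoint to the inclusion $\cCFalqc\hookrightarrow\cCFal$. Here one uses bicompleteness of $\cN'$ (it carries a right-lifted model structure) and flatness of $v$, so that the pullback defining $\Gamma_{\qc}$ commutes with base change along the vertical. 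The small-object condition is routine: the underlying diagrams of $F_*I$- and $F_*J$-cell complexes are assembled by pointwise colimits in the cofibrantly generated categories $\cN'$, $\cV$, $\cQ$, and the generating domains remain small relative to these (using the cofibrant generation of $\cN'$ provided by the right-lift along $F^r$).

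The hard part will be the acyclicity condition. By the usual reduction --- relative $F_*J$-cell complexes are retracts of transfinite composites of pushouts of maps in $F_*J$, and $F^*$-weak equivalences are closed under these operations --- it is enough to show that a pushout of a single map in $F_*J$ is an $F^*$-weak equivalence. Here I would run the Quillen path-object argument: use the small object argument on $F_*J$ to build a functorial ``fibrant replacement'' into an $F_*J$-injective object, and construct functorial path objects on $F_*J$-injective objects of $\cCFalqc$ out of the cylinder objects of $\cN'$ together with path objects in $\cV$ and $\cQ$ (available since these are model categories); the standard diagram chase then yields acyclicity. Equivalently, since $F^*$ modifies only the nub, through $F^r$, and $\cN'$ carries by hypothesis the structure right-lifted along $F^r$ from $\cN$, the condition reduces to the acyclicity of the adjunction $F\dashv F^r\colon\cN\rightleftarrows\cN'$, which holds by that hypothesis. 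Faithful flatness of $v$ enters to ensure an $F^*$-weak equivalence is detected on the nub and vertex alone, so that the $\cQ$-component imposes no further constraint.

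The statement about nubs is then formal. Since the structure on $\cN'$ is right-lifted along $F^r$, the functor $F^r$ is right Quillen, hence $F$ is left Quillen; so the nubs of $F_*I$-cell complexes are $F$-cell complexes in $\cN'$, hence cofibrant, and passing to retracts every cofibrant object of $\cCFalqc$ has cofibrant nub. Dually, if $X$ is fibrant in $\cCFalqc$ then $F^*X$ is fibrant in $\cCalqc$ --- fibrations being created by $F^*$ --- and fibrant objects of $\cCalqc$ have fibrant nubs in $\cN$ (they have the right lifting property against the acyclic cofibrations concentrated at the nub); but the nub of $F^*X$ is $F^r$ applied to the nub of $X$, so the nub of $X$ is fibrant in $\cN'$ because that structure is right-lifted along $F^r$.
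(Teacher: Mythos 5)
Your proposal takes a genuinely different route from the paper. You invoke the Kan transfer theorem along the adjunction $F_*\dashv F^*$, whereas the paper bypasses transfer altogether: it constructs generating sets for $\cCFalqc$ directly from the objects $a(N)=(N\to hN\leftarrow 0)$ and $b(U)=(0\to 0\leftarrow U)$, which are left adjoints to evaluation at the nub and at the \emph{vertex-fibre} $fV_X=\ker(vV_X\to hN_X)$, and then checks Hovey's recognition theorem \cite[2.1.19]{HoveyMC} by hand. The payoff of the paper's approach is that $I$-inj and $J$-inj are characterized concretely (nub component in $I_\cN$-inj, vertex-fibre component in $I_\cV$-inj), so the acyclicity condition $I\text{-inj}\subseteq\cW$ can be verified componentwise; your approach outsources this to the transfer machinery, which is cleaner to state but harder to check.

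The main gap is precisely in your acyclicity argument. Your reduction --- ``since $F^*$ modifies only the nub, the condition reduces to the acyclicity of $F\dashv F^r$'' --- skips the real work. Pushing out a map of $F_*J$ in $\cCFalqc$ changes the nub (in $\cN'$), the vertex (in $\cV$), and the $\cQ$-component; applying $F^*$ leaves the vertex alone but you still must show the result is a weak equivalence at \emph{all three} spots. Moreover, you attribute the role of faithful flatness of $v$ to making the $\cQ$-component superfluous, which is off: quasi-coherence already makes the $\cQ$-component determined by the nub. What faithful flatness actually does --- and this is the heart of the paper's argument --- is let one pass from a weak equivalence at the \emph{vertex-fibre} $fV$ and at the nub to a weak equivalence at the vertex $V$, via the pullback square relating $fV$, $vV$, and $hN$. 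Your write-up never introduces the vertex-fibre, so this step is missing. The cylinders hypothesis on $\cN'$ enters at exactly this point (to argue with acyclics in the stable setting), not, as your path-object alternative suggests, to manufacture functorial path objects on $F_*J$-injective objects --- a construction you gesture at but do not supply. Finally, the appeal to flatness of $v$ in your bicompleteness paragraph (``so that the pullback defining $\Gamma_{\qc}$ commutes with base change along the vertical'') is unclear; $\Gamma_{\qc}$ only needs pullbacks in $\cV$ and plays no role in the paper's argument here.

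Your last paragraph, on cofibrant/fibrant objects having cofibrant/fibrant nubs, is essentially correct and in the spirit of the paper's closing remark, though the paper states this as an observation about the model structure just built rather than deriving it separately.
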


\begin{proof}
For an object $N$ of $\cN$ we consider the cospan $a(N)=(N \lra hN
\lla 0)$. The functor $a$ is left adjoint to evaluation: 
$$\Hom_{\cCalqc}(a(N), X)=\Hom_\cN (N, N_X). $$
For an object $U$ of $\cV$ we consider the cospan, 
$b(U)=(0\lra 0\lla U)$, also left adjoint
$$\Hom_{\cCalqc}(b(U), X)=\Hom_\cV (U, fV_X),  $$
but now to the vertex-fibre: $fV_X$ is the subobject of 
$V_X$ mapping to $\ker (vV_X\lra hN_X)$.

Suppose that $I_\cN$ and $J_\cN$ are the sets of generating
cofibrations and acyclic cofibrations for $\cN$ and similarly for
$\cV$. Take $I =\{ a(N), b(U) \st N \in I_\cN, U\in
I_\cV\}$, and similarly for $J$. We then see that $I-inj$ consists of
morphisms with nub component in $I_\cN-inj$ and vertex components 
with $fV_X$ in $I_\cV-inj$. The adjunctions mean that maps out of maps
in $I$ or $J$ are detected either in the nub or in the vertex-fibre.  

Now consider the requirements of \cite[2.1.19]{HoveyMC}. Conditions 1,
2, and 3 are obvious. Condition 4 is also obvious by the adjunctions since these
conditions also hold at vertex and nub. Similarly for Condition 6.  For Condition 5, $I-inj\subseteq
J-inj$ is clear, and it remains to comment on $I-inj\subseteq
\cW$. Using the objects $a(N)$ we see that a map in $I-inj$ is an
equivalence at the nub. From the strict smashing condition it follows
that it is an equivalence also at the adelic vertex. Finally, using
the objects $b(U)$ we see that it is an equivalence at the 
vertex-fibre. It remains to deduce that it is an equivalence at the
vertex. Since $\cN'$ has cyclinders, this can be done with acyclics. We have a pullback square
$$\xymatrix{
fV\ar[r]\ar[d] &0\ar[d]\\
vV \ar[r] & hN
}$$
Now we know that $hN$ and $fV$ are acyclic, so it follows that $hV$ is
acyclic. Since $v$ is faithfully flat, it follows that $V$ is acyclic as required. 
\end{proof}


Finally, we observe that with this model structure,  if the left adjoint $F: \cN \lra \cN'$ is a 
Quillen equivalence then the left adjoint $F_*: \cCalqc \lra \cCFalqc$ is also 
 a Quillen equivalence. 

Suppose $X$ is a cofibrant object of $\cCal$ and $Y$ is a fibrant 
object of $\cCFal$ we must show that $F_*X \lra Y$ is a weak 
equivalence if and only if its adjunct $X\lra F^*Y$ is a weak 
equivalence. However $F_*$ and $F^*$ do not change the vertex, and at 
the nub they are $FN \lra N'$ and $N \lra F^r N'$: since a cofibrant 
object of $\cCalqc$ has cofibrant nub and a fibrant object in 
$\cCFalqc$ has fibrant nub, the result follows.

\subsection{The skeleton of the separated model}
We have shown in Subsection \ref{subsec:Gqc} and Lemma \ref{lem:Gqcie} that 
$$\cCsep\lra \cCsepqc\lra \cCsepqcie$$
are coreflective inclusions. 

\begin{lemma}\label{lem:sepwisequivstrong}\leavevmode
\begin{enumerate}[align=left]
\item[(i)] For  a $\wqcie$ object $X$,  the counit $\Gamma_{\qcie} X 
\xrightarrow{\simeq} X$ is an equivalence in $R_{\wqcie} \cCsep$, so 
cellularization with respect to $\wqcie$ coincides with that for 
$\qcie$. 

\item[(ii)] For  a $\wie$ $\qc$-object $X$,  the counit $\Gamma_{\ie} X 
\xrightarrow{\simeq} X$ is an equivalence in $R_{\wie} \cCsepqc$, so 
cellularization with respect to $\wie$ coincides with that for 
$\ie$. 

\item[(iii)] For  a weakly qc object $X$,  the counit $\Gamma_{\qc} X 
\xrightarrow{\simeq} X$ is an equivalence in $R_{\wqc} \cCsep$, so 
cellularization with respect to $\wqc$ coincides with that for $\qc$. 
\end{enumerate}
\end{lemma}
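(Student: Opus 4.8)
The plan is to follow the template of Lemma~\ref{lem:adwisequivstrong}: in each of the three cases one writes down the comparison between the cofibrant replacement produced by the strict coreflector ($\Gamma_{\qc}$, $\Gamma_{\ie}$, or $\Gamma_{\qcie}=\Gamma_{\ie}\Gamma_{\qc}$) and the identity on $X$, and checks that it is a weak equivalence in the relevant cellularisation. The only non-formal inputs are right properness of $\cC$ (hence of $(L_\gen\unit)\modules_\cC$), used to recognise a pullback along a weak equivalence as a weak equivalence, and the description of the left-lifted model structure on $\cCsepqc$ from Subsection~\ref{subsec:qcmodels}, according to which a map of $\qc$-objects is a weak equivalence there exactly when it is so on the nub and on the vertex.

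Part~(iii) is the $\wqc$ half of Lemma~\ref{lem:adwisequivstrong} transported to $\cCsep$, because $\Gamma_{\qc}$ is defined on $\cCsep$ by the same formula as on $\cCad$ (Subsection~\ref{subsec:Gqc}). For a weakly quasi-coherent $X=[\,\{M_\fm\}_\fm\lra P\lla U\,]$ the counit $\Gamma_{\qc}X\lra X$ is the identity on the nub, is the map $L_\gen\prod_\fm M_\fm\lra P$ on the splicing object---a weak equivalence since $X$ is weakly quasi-coherent---and is the comparison $U'\lra U$ on the vertex, where $U'$ is the pullback of $U\lra P\lla L_\gen\prod_\fm M_\fm$ and hence $U'\lra U$ is a weak equivalence by right properness. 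So the counit is an objectwise weak equivalence and $R_{\wqc}$ agrees with $R_{\qc}$.

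For~(ii) the new ingredient is the idempotent-extendification functor $\Gamma_{\ie}$ of Lemma~\ref{lem:Gqcie}. For a weakly idempotent extended $\qc$-object $X=[\,\{M_\fm\}_\fm\lra P\lla U\,]$ the counit $\Gamma_{\ie}X\lra X$ is the identity on the vertex, and its $\fm$-th nub component is the map $M'_\fm\lra M_\fm$, where (unwinding the diagram of Lemma~\ref{lem:Gqcie}) $M'_\fm$ is the pullback of $e_\fm j_\ast U\lra e_\fm P\lla M_\fm$, the right-hand leg being the localisation $M_\fm\lra L_\gen M_\fm=e_\fm P$. Weak idempotent extendedness of $X$ makes $e_\fm j_\ast U\lra e_\fm P$ a weak equivalence, so $M'_\fm\lra M_\fm$ is a weak equivalence by right properness. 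Thus the counit is a weak equivalence on nub and vertex, hence---by the description of weak equivalences in $\cCsepqc$ recalled above---a weak equivalence in $\cCsepqc$, \emph{a fortiori} in $R_{\wie}\cCsepqc$; since $\Gamma_{\ie}X$ is $\ie$, this also shows that the $\wie$- and $\ie$-objects generate the same cellular subcategory, which is~(ii). For~(i) one factors $\Gamma_{\qcie}X=\Gamma_{\ie}\Gamma_{\qc}X\lra\Gamma_{\qc}X\lra X$: the second map is the objectwise weak equivalence of~(iii) (a weakly qcie object is in particular weakly quasi-coherent), and one checks that $\Gamma_{\qc}X$ is itself weakly idempotent extended (because $U'\lra U$ is a weak equivalence and passage to idempotent pieces preserves weak equivalences), so the first map is a weak equivalence by the argument just given for~(ii); composing gives the claim, so $R_{\wqcie}$ agrees with $R_{\qcie}$.

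The step I expect to be the main obstacle is the splicing object in~(i) and~(ii): the splicing component of $\Gamma_{\ie}X\lra X$ is the map $j_\ast U\lra P$, which is \emph{not} an objectwise weak equivalence in general---for instance because $L_\gen$ need not commute with the infinite product over the closed points---so, in contrast with Lemma~\ref{lem:adwisequivstrong}, one cannot conclude objectwise. The way around this is exactly the observation from Subsection~\ref{subsec:qcmodels} that in the model structure on $\cCsepqc$ the splicing object is slaved to the nub, so that weak equivalences are detected on nub and vertex alone; the remaining care is bookkeeping, making sure that the comparisons above---which a priori live in the different cellularisations $R_{\wqc}$, $R_{\wqcie}$, $R_{\wqce}$ and in $\cCsepqc$---can legitimately be composed, using that a weak equivalence for a finer cellularisation is one for a coarser.
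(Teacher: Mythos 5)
Your proposal is correct and follows essentially the same route as the paper's proof, which is compressed to the remarks that Part~(iii) is as in the adelic case, that for~(i) one considers the last diagram of Lemma~\ref{lem:Gqcie} and ``repeatedly uses left and right properness'', and that the map $M'\lra \prod_\fm M_\fm$ is only an idempotent weak equivalence (so that $\prod_\fm M'_\fm\lra\prod_\fm M_\fm$ is a genuine one). You have unpacked exactly those steps: the unwinding $M'_\fm=e_\fm M'$ as a pullback, the use of right properness, and the observation that idempotent extendedness supplies the needed weak equivalence before pulling back. The only deviation is organisational: the paper argues~(i) directly from the $\Gamma_{\qcie}$ diagram and declares~(ii) ``similar'', whereas you prove~(ii) first and then deduce~(i) by factoring $\Gamma_{\qcie}=\Gamma_{\ie}\Gamma_{\qc}$ through~(iii) and the~(ii)-argument; both orderings are fine. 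One small clarification on your flagged obstacle: once one works with $\qc$ objects, the splicing object is identified with $L_\gen$ applied to (the product over) the nub, and since $L_\gen$ is smashing it preserves weak equivalences, so the splicing component actually \emph{is} an equivalence whenever the nub component is --- the model structure on $\cCsepqc$ detecting equivalences at nub and vertex is consistent with, rather than a workaround for, this. So the obstacle is less severe than you feared, but your resolution is still the correct one and is implicitly what the paper relies on.
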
 

\begin{proof}
The proof of Part (iii) is exactly like the corresponding part in the
adelic case. 

For Part (i) we consider the last diagram of Lemma \ref{lem:Gqcie}, and
repeatedly use left and right properness. The notable point is that
the map $M'\lra \prod_\fm M_\fm$ is only an idempotent weak
equivalence, but of course this
implies that $\prod_\fm M_\fm'\lra \prod_\fm M_\fm$ is an actual weak
equivalence. 

The argument for Part (ii) is similar. 
\end{proof}

\begin{thm}
\label{thm:sepskeleton}
(Separated Skeleton Theorem)
Let $\cC$ be a 1-dimensional Noetherian model category admitting
strict generization, and suppose $v$ is faithfully flat.  Then there are Quillen equivalences
$$\cC \simeq_Q L_\Pi  R_\wqcie\cCad =L_\Pi R_\wie R_\wqc\cCad  
\simeq_Q L_\Pi R_\wie \cCadqc  
\simeq_Q R_\wie \cCsepqc  
 \simeq_Q \cCsepqcie$$ 
and hence the skeleton of the separated model $\cCsepqcie$ is Quillen
equivalent to  $\cC$.
\end{thm}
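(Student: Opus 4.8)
The plan is to verify each link in the displayed chain of Quillen equivalences, reading the middle row of the roadmap of Subsection~\ref{subsec:roadmap}. The first equivalence $\cC \simeq_Q L_\Pi R_\wqcie\cCad$ is nothing but the separated model already produced in Theorem~\ref{thm:adsepcomp}, so no work is needed there. The identification $L_\Pi R_\wqcie\cCad = L_\Pi R_\wie R_\wqc\cCad$ is a bookkeeping point about iterated cellularization: by Subsections~\ref{subsec:wqc} and~\ref{subsec:wqcie} both $\wqc$ and $\wqcie$ arise as classes of cofibrant objects of cellularizations with respect to sets of small objects, the generating set for $\wqcie$ being that for $\wqc$ enlarged by the $\Pi$-trivial object built from $T = \prod_\fm\Lambda_\fm\unit/\bigoplus_\fm\Lambda_\fm\unit$; cellularizing first at $\wqc$ and then at the remaining generators therefore recovers $R_\wqcie\cCad$.

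Next I would dispose of the two cellular-skeleton links, namely $(1)$ $L_\Pi R_\wie R_\wqc\cCad \simeq_Q L_\Pi R_\wie\cCadqc$ and $(3)$ $R_\wie\cCsepqc \simeq_Q \cCsepqcie$. For $(1)$: Lemma~\ref{lem:adwisequivstrong} says the $\wqc$-cellularization of $\cCad$ agrees with cellularization at the strict $\qc$ objects, and Subsection~\ref{subsec:Gqc} exhibits $\cCadqc$ as a coreflective subcategory of $\cCad$ (right adjoint $\Gqc$); equipping it with the left-lifted model structure, the argument of~\cite{coreflective} makes $i\colon\cCadqc\to\cCad$ a Quillen equivalence onto $R_\wqc\cCad$, and I would check this is unaffected by the subsequent right localization at $\wie$ and left localization at $\Pi$, the relevant generators and $\Pi$-local objects being available inside $\cCadqc$. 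Link $(3)$ runs along identical lines, with Lemma~\ref{lem:Gqcie} supplying the coreflection $\Gie\colon\cCsepqc\to\cCsepqcie$ (here strict generization is used), Lemma~\ref{lem:sepwisequivstrong}(ii) identifying the $\wie$-cellularization of $\cCsepqc$ with cellularization at the strict $\ie$ objects, and~\cite{coreflective} upgrading the coreflective inclusion to a Quillen equivalence; for this I first need that $\cCsepqc$ carries a cofibrantly generated model structure, which is the content of the model-structure lemma of Subsection~\ref{subsec:qcmodels} and is exactly where the hypothesis that $v = j_\ast$ is faithfully flat is consumed.

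The core of the argument is link $(2)$, the local-skeleton equivalence $L_\Pi R_\wie\cCadqc \simeq_Q R_\wie\cCsepqc$. The relevant adjunction $\sigma_\ast\colon\cCadqc\rightleftarrows\cCsepqc\colon\sigma^\ast$ comes from $\sigma\dashv\pi$ by Lemma~\ref{lem:qcpush}, and $\cCsepqc$ carries the model structure right-lifted along $\sigma^\ast$ by Subsection~\ref{subsec:qcmodels}. The decisive input is Lemma~\ref{lem:sigmapi}: the very same adjunction is already a Quillen equivalence $L_\Pi\bigl((\prod_\fm\Rm)\modules\bigr) \simeq_Q \prod_\fm(\Rm\modules)$ at the level of the nub. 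I would propagate this to the $\qc$-diagram categories using the last observation of Subsection~\ref{subsec:qcmodels} --- that $F_\ast\colon\cCalqc\to\cCFalqc$ is a Quillen equivalence whenever $F$ is, since $F_\ast$ and $F^\ast$ leave the vertex untouched, act as $F$ and $F^r$ on the nub, and cofibrant (respectively fibrant) objects have cofibrant (respectively fibrant) nubs --- applied with $F = \bar\sigma$ rather than $\sigma$ itself. That would give $L_\Pi\cCadqc \simeq_Q \cCsepqc$, after which cellularizing both sides at $\wie$ (with generators transported along $\sigma_\ast$) produces $(2)$.

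I expect the main obstacle to lie inside links $(2)$ and $(3)$: one must confirm that the right-lifted model structure on $\cCsepqc$ genuinely exists --- here both standing hypotheses enter, faithful flatness of $v$ to argue that an $I$-injective map which is an equivalence at the nub and the vertex-fibre is an equivalence at the vertex, and strict generization to control the adelic vertex --- and one must check the compatibility that $L_\Pi\cCadqc$ is literally the $\qc$-diagram category on the cospan whose nub has been replaced by $L_\Pi\bigl((\prod_\fm\Rm)\modules\bigr)$, so that Lemma~\ref{lem:sigmapi} can be fed into the diagrammatic machinery. Once those points are secured, the rest is routine assembly of results already in hand, and the closing assertion that $\cCsepqcie$ is Quillen equivalent to $\cC$ follows by composing the chain.
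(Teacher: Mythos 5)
Your proposal is correct and follows the same decomposition of the chain that the paper itself uses: first equivalence from Theorem~\ref{thm:adsepcomp}, the two cellular-skeleton links via the coreflections $\Gqc$ and $\Gie$ with the ``weak-implies-strong'' lemmas (\ref{lem:adwisequivstrong}, \ref{lem:sepwisequivstrong}) and the lifting criterion of~\cite{coreflective}, and the local-skeleton link via $\sigma_*\dashv\sigma^*$ with the nub-level Quillen equivalence of Lemma~\ref{lem:sigmapi} propagated through the qc-diagram machinery of Subsection~\ref{subsec:qcmodels}.

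The one place your route diverges, mildly, is inside link~(2). You propose to first establish $L_\Pi\cCadqc\simeq_Q\cCsepqc$ by applying the general ``$F$ Quillen equivalence implies $F_*$ Quillen equivalence'' observation to $F=\bar\sigma$, and then cellularize both sides at $\wie$. This is clean, but it requires the compatibility you flag --- that $L_\Pi\cCadqc$ is literally the $\qc$-diagram category built on the nub $L_\Pi\bigl((\prod_\fm\Rm)\modules\bigr)$ --- and additionally that the $\wie$-cellularization passes through the resulting equivalence, so implicitly a commutation of $L_\Pi$ with $R_\wie$. The paper's proof orders the operations the other way (first forms the Quillen pair $\cCadqc\rightleftarrows\cCsepqc$, then cellularizes, then left-localizes at $\Pi$, then observes a Quillen equivalence), which sidesteps the identification of $L_\Pi\cCadqc$ as a diagram category with a modified nub, at the cost of ending with an ``observe it is a Quillen equivalence from the definition'' that hides roughly the same content as your compatibility check. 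Both routes work; yours is more systematic in deriving the equivalence from the abstract $F\mapsto F_*$ principle, while the paper's is more direct in never needing $L_\Pi$ of a diagram category to be a diagram category. Either way, the standing hypotheses (strict generization and faithful flatness of $v$) are consumed exactly where you say: strict generization for $\Gie$ to preserve $\qc$ and for the vertex-equivalence argument, faithful flatness for the right-lifted model structure on $\cCsepqc$.
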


\begin{proof}
By Theorem \ref{thm:adsepcomp}, $\cC$ is equivalent to the separated model $L_\Pi
R_\wqcie\cCad$. 
The equivalence  $L_\Pi R_\wie R_\wqc\cCad  \simeq L_\Pi R_\wie
\cCadqc$  follows from the fact that $\cCadqc$ is a cellular skeleton
of $R_\wqc\cCad$ (Corollary \ref{cor:adcellskel}). The equivalence $ R_\wie \cCsepqc  \simeq
\cCsepqcie$ follows from Lemma \ref{lem:sepwisequivstrong} since the objects of $\cGad$ are in the image of the
reflective inclusion $\sigma^*$. 

The equivalence $L_\Pi R_\wie \cCadqc \simeq L_\Pi R_\wie \cCsepqc$ is
based on the adjoints from Subsection \ref{subsec:sepcospan}.
Subsection \ref{subsec:qcmodels} shows that $\cCsepqc$ admits a model
structure  right-lifted from $\cCadqc$. Accordingly, the $\sigma_*\dashv \sigma^*$ adjunction is
Quillen for the model structures on $\cCadqc$ and  
$\cCsepqc$.  Cellularizing both sides, it remains
Quillen for the model structures on $R_\we \cCadqc$ and 
$R_\wie\cCsepqc$. Finally, it remains a Quillen adjunction relating $L_\Pi
R_\we \cCadqc$ and $R_\wie \cCsepqc$, and this adjunction is a
Quillen equivalence from the definition. 
 \end{proof}

\begin{remark}
We  may consider the following  `super-separated' diagram 
$$
\cC_{\mathrm{ssep}} = \bbGamma \left( \begin{gathered}\xymatrix{
& (L_\gen \unit) \modules_\cC \ar[d]^{\{e_\fm j_\ast\}} \\
\prod_\fm \left[ \Lambda_\fm \unit \modules_\cC \right]
\ar[r]_-{\prod_\fm L_\gen} & \prod_\fm \left[ L_\gen \Lambda_\fm \unit \modules_\cC \right] 
}  \end{gathered} \right)
$$
where the horizontal functor is localization $L_{\gen}$ on each
factor, and the vertical functor is extension of scalars on each
factor. However, this cannot form a model for $\cC$ as we have simply removed too much
information: the splicing data  for assembling the contributions at
individual closed points needs to be `the same almost everywhere' in
the sense that an element of the adeles is given by $(x_p)\in \prod_p
\Q_p$  with $x_p\in \Z_p^{\wedge}$ for almost all $p$.

From another point of view the super-separated square 
fails to be a model because the square 
$$\begin{gathered} \xymatrix{
X \ar[r] \ar[d]  & L_{\gen} \unit \tensor X \ar[d] \\
\displaystyle{\prod_{\fm}} \left[\Lambda_{\fm} \unit \otimes X \right]\ar[r] &
\displaystyle{\prod_{\fm}}\left[ L_{\gen}\Lambda_{\fm} \unit \otimes X \right] \rlap{.}
} \end{gathered}$$
is usually not a homotopy pullback square. 
\end{remark}

\subsection{The  skeleton of the complete model}\label{sec:compskeleton}

Under somewhat more restrictive hypotheses, we identify a local skeleton of the complete model $L_\Lambda R_\wqc \cCad$. 

By the same proof as  Lemma \ref{lem:adwisequivstrong} we compare weak 
and strong $\qc$ objects. 
\begin{lemma}\label{lem:Lkwisequivstrong}
 For  a weakly qc object $X$ in $\cCLk$, the counit $\Gamma_{\qc} X 
\xrightarrow{\simeq} X$ is an equivalence in $R_{\wqc} \cCLk$, so 
cellularization with respect to $\wqc$ coincides with that for $\qc$. \qqed
\end{lemma}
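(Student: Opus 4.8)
The plan is to transcribe the second half of the proof of Lemma~\ref{lem:adwisequivstrong} (the part comparing $\wqc$ and $\qc$ objects) directly into the $\cCLk$ setting. The observation that makes this mechanical is that the quasi-coherification functor $\Gamma_{\qc}$ of Subsection~\ref{subsec:Gqc} never touches the nub: it only replaces the splicing object $Q$ by $h(N)$ (with invertible horizontal base change) and the vertex $V$ by the pullback $V'$ of $V\to Q$ along $h(N)\to Q$. Consequently, replacing the nub category $\prod_\fm(\Rm\modules)$ of $\cCsep$ by the abelian skeleton $\prod_\fm[\RmmodLfmhat]$ of $\cCLk$ is invisible to the argument, and the vertex category stays $(L_\gen\unit)\modules$. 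This is also why this is the easiest of the three weak-to-strong comparisons: unlike the idempotent-extended cases in Lemma~\ref{lem:sepwisequivstrong}, no idempotent pieces of the nub enter, so there is nothing resembling the delicate point that $M'\to\prod_\fm M_\fm$ is only an idempotent equivalence.

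Concretely, I would fix a weakly $\qc$ object
$$X=\left[\begin{gathered}\xymatrix{& V\ar@{..>}[d]\\ N\ar@{..>}[r]&Q}\end{gathered}\right]\in\cCLk,$$
so that $V\in(L_\gen\unit)\modules$, $N\in\prod_\fm[\RmmodLfmhat]$, and the horizontal base change map $h(N)\to Q$ is a weak equivalence, where $h=L_\gen\ell$ denotes the horizontal functor of $\cCLk$. Applying the description of $\Gamma_{\qc}$ on $\cCLk$ from Subsection~\ref{subsec:Gqc} (which is word-for-word the one on $\cCad$), the counit $i\Gamma_{\qc}X\to X$ has three components: the identity on the nub $N$; the map $h(N)\to Q$ on the splicing object, which is a weak equivalence by the weakly-$\qc$ hypothesis; and the projection $V'\to V$ on the vertex. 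So the whole statement reduces to checking the vertex component.

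For the vertex, $V'\to V$ is the base change of the weak equivalence $h(N)\to Q$ along $V\to Q$, and since $\cC$, hence the vertex category $(L_\gen\unit)\modules$, is right proper, such a base change is again a weak equivalence. Therefore $i\Gamma_{\qc}X\to X$ is an objectwise weak equivalence, a fortiori an equivalence in $R_{\wqc}\cCLk$, and cellularization at $\wqc$ coincides with cellularization at $\qc$. There is no genuine obstacle here: the only inputs are right properness of the vertex category, inherited from $\cC$, and the fact that $\cCLk$ and its quasi-coherification have already been set up in Subsection~\ref{subsec:Gqc}; granting these, the argument is verbatim the adelic one.
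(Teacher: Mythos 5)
Your proposal is correct and is essentially the paper's argument: the paper's entire "proof" is the one-line remark ``by the same proof as Lemma~\ref{lem:adwisequivstrong}'' immediately preceding the lemma, and you have simply unpacked what that means. The two load-bearing points you isolate — that $\Gamma_{\qc}$ acts as the identity on the nub (so the change of nub category from $\prod_\fm(\Rm\modules)$ to $\prod_\fm[\RmmodLfmhat]$ is invisible), and that the only nontrivial component of the counit, the vertex map $V'\to V$, is a base change of the weak equivalence $h(N)\to Q$ in the right-proper vertex category $(L_\gen\unit)\modules$ — are exactly the inputs of the $\qc$ half of Lemma~\ref{lem:adwisequivstrong}.
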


\begin{thm}\label{thm:compskeleton}
Let $\cC$ be a 1-dimensional Noetherian model category which is
formally algebraic, has strict generization and so that $v$ is
faithfully flat. Then there are Quillen equivalences
$$\cC \simeq_Q R_\wqce \cCad \simeq_Q L_\Lambda L_\Pi R_\wqc \cCad \simeq_Q 
 L_\Lambda L_\Pi \cCadqc \simeq_Q 
 L_\Lambda   \cCsepqc \simeq_Q \cCLkqc. $$
and hence the skeleton of the complete model $\cCLkqc$ is Quillen
equivalent to $\cC$.
\end{thm}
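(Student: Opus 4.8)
The plan is to realise the displayed chain of Quillen equivalences one arrow at a time, following the roadmap of \Cref{subsec:roadmap} and feeding in the model-structure results assembled in Sections~\ref{sec:nubskeletons}--\ref{sec:cellularskeletons}. Concretely I would verify, in turn: (i) $\cC \simeq_Q R_{\wqce}\cCad$; (ii) $R_{\wqce}\cCad \simeq_Q L_\Lambda L_\Pi R_{\wqc}\cCad$; (iii) $L_\Lambda L_\Pi R_{\wqc}\cCad \simeq_Q L_\Lambda L_\Pi \cCadqc$; (iv) $L_\Lambda L_\Pi \cCadqc \simeq_Q L_\Lambda \cCsepqc$; and (v) $L_\Lambda \cCsepqc \simeq_Q \cCLkqc$.

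Steps (i)--(iii) I expect to be quick. Step (i) is \Cref{equiv1}. For (ii) I would invoke \Cref{thm:adsepcomp}, which already produces a zig-zag of Quillen equivalences $R_{\wqce}\cCad \simeq_Q L_\Lambda R_{\wqc}\cCad$ (the complete model), and then use the relation between $L_\Pi$ and $L_\Lambda$ recorded in \Cref{subsec:PiLambda} to identify $L_\Lambda R_{\wqc}\cCad$ with $L_\Lambda L_\Pi R_{\wqc}\cCad$ on the nose. For (iii), \Cref{cor:adcellskel} (through the $\wqc$/$\qc$ part of \Cref{lem:adwisequivstrong} and \cite{coreflective}) gives that the coreflective inclusion $i\colon \cCadqc \hookrightarrow \cCad$ is a Quillen equivalence $\cCadqc \simeq_Q R_{\wqc}\cCad$; since both $\Pi$ and $\Lambda$ consist of $\qc$ objects (the nub $K_\fm$ of each generator of $\Lambda$ is $L_\gen$-acyclic, being supported away from $\gen$), left Bousfield localising at $\Pi\sqcup\Lambda$ makes sense on both sides, and this cellular skeleton survives the localisation exactly as similar moves are used in the proof of \Cref{thm:sepskeleton}; I would then rewrite $L_\Lambda L_\Pi\cCadqc$ as $L_\Lambda\cCadqc$ again.

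The substance is in steps (iv) and (v), which are the two passages to a local skeleton of the nub and are handled by the formal pattern of \Cref{sec:nubskeletons}. For (iv), \Cref{lem:sigmapi} exhibits $\prod_\fm(\Rm\modules)$ as the reflective inclusion of a local skeleton of $L_\Pi(\prod_\fm\Rm)\modules$; \Cref{lem:qcpush} lifts the reflective inclusion to $\qc$-diagrams, yielding the adjunction $\sigma_*\colon \cCadqc \rightleftarrows \cCsepqc \colon \sigma^*$ of \Cref{subsec:sepcospan}; and \Cref{subsec:qcmodels}, using that $\cC$ has strict generization and that $v=j_\ast$ is faithfully flat, equips $\cCsepqc$ with a model structure right-lifted along $\sigma^*$, with cofibrant objects having cofibrant nubs and fibrant objects fibrant nubs. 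As in the last paragraph of \Cref{subsec:qcmodels}, the same adjunction is then a Quillen equivalence $L_\Pi\cCadqc \simeq_Q \cCsepqc$ — $\sigma_*$ and $\sigma^*$ leave the vertex and the splice untouched, and at the nub they restrict to the Quillen equivalence of \Cref{lem:sigmapi} — and applying $L_\Lambda$ and using $L_\Lambda L_\Pi=L_\Lambda$ gives (iv). Step (v) runs along the same lines but now uses that $\cC$ is formally algebraic, so that each nub factor is $\Lambda_\fm\unit\modules_\cC \simeq_Q \Rm\modules$ for a Noetherian graded local ring $(\Rm,\fm)$: \Cref{lem:Lfmreflector} and \Cref{prop:Lkmodel} make $\RmmodLfmhat$ the reflective inclusion of a local skeleton of the completion $\Lambda_\fm(\Rm\modules)$, and taking products identifies $\prod_\fm\RmmodLfmhat$ with the local skeleton of $L_\Lambda$ applied to the separated nub; \Cref{lem:qcpush} produces $\ell_*\colon \cCsepqc \rightleftarrows \cCLkqc \colon \ell^*$ (\Cref{subsec:Lkcospan}), and \Cref{subsec:qcmodels} — with $v$ faithfully flat, and using the Pol--Williamson right-lift at the nub in place of the general criterion — equips $\cCLkqc$ with a right-lifted model structure, after which the adjunction descends to a Quillen equivalence $L_\Lambda\cCsepqc \simeq_Q \cCLkqc$.

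The step I expect to cause the most trouble is (v). First, one must check that the diagram-level localisation $L_\Lambda$, restricted to the separated nub, really is factorwise $\Lfm$-completion rather than something coarser; this needs a short computation with the generators $\lambda_\fm = (K_\fm \to 0 \leftarrow L_\gen\unit)$. Second, the existence of the right-lifted model structure on $\cCLkqc$ is not formal, since the general right-lifting criterion fails because $\Lfm$ does not preserve sequential colimits, forcing one through the abelian model of \Cref{prop:Lkmodel} — which is precisely why the formally algebraic hypothesis is imposed. Third, one should be careful that left Bousfield localisation and passage to a reflective subcategory interact in the way the pattern of \Cref{sec:nubskeletons} requires; this is where the hypotheses of strict generization and faithful flatness of $v$ are consumed. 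Once these points are settled, stringing (i)--(v) together yields the displayed chain, so $\cCLkqc$ is the local skeleton of the complete model and is Quillen equivalent to $\cC$.
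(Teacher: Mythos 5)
Your plan reproduces the paper's argument: the chain is taken one equivalence at a time, exactly as laid out in the roadmap of Subsection~\ref{subsec:roadmap}, with (i)–(iii) discharged by Theorems~\ref{equiv1}, \ref{thm:adsepcomp}, and the cellular-skeleton Corollary~\ref{cor:adcellskel} (plus the identity $L_\Lambda L_\Pi = L_\Lambda$ from \Cref{subsec:PiLambda}), and with (iv) and (v) handled as local skeletons of the nub via Lemmas~\ref{lem:sigmapi}, \ref{lem:qcpush} and the right-lifted model structures of Subsection~\ref{subsec:qcmodels}, the last step invoking Proposition~\ref{prop:Lkmodel} at the nub under the formally algebraic hypothesis. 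The subtleties you flag — that the general right-lifting criterion fails for $\Lfm$ (hence the Pol–Williamson Proposition~\ref{prop:Lkmodel} is needed), and that the diagrammatic $L_\Lambda$ must be checked to be factorwise completion at the separated nub — are precisely the points the paper leans on, so the proposal is correct and essentially the same as the paper's proof.
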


\begin{proof}
The equivalences up to $L_\Lambda \cCsepqc$ are exactly as in Theorem
\ref{thm:sepskeleton}.  
For the remaining equivalence $L_\Lambda \cCsepqc \simeq \cCLkqc$, 
we combine the left adjoints $\Lfm : \Rm \modules \lra
\Lfm-\Rm\modules$, to obtain 
$$\ell : \prod_\fm (\Rm \modules )\lra \prod_\fm (\Lfm -\Rm\modules)$$
and then use the  construction of Section 
\ref{subsec:qcpush} to give a functor $\ell_* \colon \cCsepqc \to \cCLkqc$
left adjoint to the inclusion. We give $\cCLkqc$ the right-lifted model
structure as in Subsection \ref{subsec:qcmodels}, giving a Quillen
adjunction relating $\cCsepqc$ and $\cCLkqc$. This localizes further
to give a  Quillen
adjunction relating $L_\Lambda \cCsepqc$ and $\cCLkqc$, and this is a
Quillen equivalence by applying Proposition \ref{prop:Lkmodel} at the nub. 
\end{proof}

\part{Examples}\label{sec:examples}

In the final part of this paper where we make our results explicit in  several
examples, aiming to give a systematic and connected account easing comparison.

\section{Abelian groups}\label{sec:abgrp}

The prototypical example is the derived category of abelian groups
$\cCb =\sfD (\bZ)$.   This applies without  change
if $\bZ$ is replaced by an arbitrary 1-dimensional commutative
Noetherian domain. 

 The underlying category is $\bZ\modules$. Here  $\Z$ is viewed
 as a dg ring entirely in degree 0 with zero differential. Thus  $\bZ\modules$ means
 the category of dg $\bZ$-modules (i.e., chain complexes of abelian
 groups). (When we need to consider abelian groups without grading or
 differential, we say so explicitly). 
 
We make $\bZ\modules$ into a model category using
 the (algebraically)  projective model structure. 
There is a natural order-reversing homeomorphism
$$\spec (\bZ)\stackrel{\cong}\lra \spco (\sfD (\bZ))$$
where the classical {\em algebraic} prime $\fp$ corresponds to the
{\em Balmer} prime $\fp_b=\{ M \st M_\fp\simeq 0\}$ \cite{BalmerSpc}. The closed points are the algebraically maximal primes $(p)$,  and there is a unique generic point corresponding to the algebraic prime $(0)$, and we write $\bQ=\bZ_{(0)}$ for the field of fractions. 

The adelic approximation theorem gives a homotopy pullback square in $\bZ\modules$:
$$\begin{gathered} \xymatrix{
\bZ \ar[r] \ar[d] & \bQ \ar[d] \\
\displaystyle{\prod_{p}} \bZ_p^\wedge \ar[r] & \bQ \tensor
\displaystyle{\prod_{p}} \bZ_p^\wedge  \rlap{ .}} \end{gathered}$$
We note that this square is also a strict pullback square in the
underlying category of abelian groups, familiar as the classical Hasse square. 

In this setting, we have both strict generization and $p$-adic
completion functors, so we will get three different  skeletons.

By Corollary \ref{cor:adcellskel}, the cellular skeleton of the  adelic model is
$$
(\bZ\modules)_{\mathrm{ad}}^{\qce} =
 \bbGamma \left( \begin{gathered}\xymatrix{
& \bQ\modules\ar[d]|{\text{\Large \textbullet}}^{j_\ast} \\ 
(\prod_p \bZ_p^\wedge)\modules \ar[r]|-{\text{\Large \textbullet}}_-{\otimes \bQ} &(\bQ \tensor \prod_p \bZ_p^\wedge) \modules
}  \end{gathered} \right)
$$
The objects of the cellular skeleton of the adelic model  are chain
complexes of diagrams 
$$ \left[ \begin{gathered} 
\xymatrix{
& U \ar@{..>}[d] |{\text{\Large \textbullet }} \\ 
M \ar@{..>}[r]|-{\text{\Large \textbullet}} & P 
} 
\end{gathered} \right]
$$
where $U$ is a $\bQ$-vector space, $M$ is a module over 
$\prod_p \bZ_p^{\wedge}$ and the information at $P$ is an isomorphism of $\bQ \otimes
\prod_p \Z_p^\wedge$-modules $\bQ \otimes M \cong j_\ast U$.

By Theorem \ref{thm:sepskeleton} the  separated skeleton model is
$$
(\bZ\modules)_{\mathrm{sep}}^{\qcie} =\bbGamma 
\left( \begin{gathered}\xymatrix{
& \bQ\modules\ar[d]|{\blacksquare}^-{j_\ast}
\\ 
 \prod_p (\bZ_p^\wedge \modules)\ar[r]|-{\text{\Large \textbullet}}_-{\otimes \bQ \circ \Pi} &(\bQ \tensor \prod_p \bZ_p^\wedge)\modules 
}  \end{gathered} \right) 
$$
where we have used $\blacksquare$ to indicate the ie
condition. The objects of the separated model  are chain complexes of diagrams
$$ \left[ \begin{gathered} 
\xymatrix{
& U \ar@{..>}[d]|-{\blacksquare} \\ 
\{M_p\} \ar@{..>}[r]|-{\text{\Large \textbullet}} & P 
} 
\end{gathered} \right]
$$
where $U$ is a $\bQ$-vector space, each $M_p$ is a module over 
$\bZ_p^{\wedge}$, and  the information at $P$ is given by a  map
$U\lra \bQ\tensor \prod_pM_p$ 
which is extension of scalars along $\Q \lra \Q_p^\wedge$ on each idempotent piece. 

Finally, in the language of \Cref{sec:compskeleton}, the  skeleton of the complete model is 
$$
(\bZ\modules)_{L \kappa}^{\qc} =\bbGamma 
\left( \begin{gathered}\xymatrix{
&\bQ\modules\ar[d]^{j_\ast}
\\ 
 \prod_p  L_0^p-\bZ_p^\wedge\modules \ar[r]|-{\text{\Large \textbullet}}_-{\otimes \bQ \circ \Pi} &(\bQ \tensor \prod_p \bZ_p^\wedge)\modules 
}  \end{gathered} \right)
$$
The objects of the complete model  are chain complexes of diagrams
$$ \left[ \begin{gathered} 
\xymatrix{
& U \ar@{..>}[d] \\ 
\{ M_p\} \ar@{..>}[r]|-{\text{\Large \textbullet}} & P 
} 
\end{gathered} \right]
$$
where $U$ is a $\Q$-vector space,  $M_p$ is an $L_0^p$-complete
$\Z_p^\wedge$-module, and the
information at $P$ is given by 
 an unconstrained map $j_\ast U \to \bQ \otimes \prod_p L^p_0 M_p$ of $\Q \tensor
 \prod_p \bZ_p^\wedge$-modules. 

\begin{remark}
To give a feeling for 
the different models, we  recommend that the 
reader identifies the images in the  adelic,  separated
and  complete models of 
$\sfD (\Z)$ for the abelian groups $\Z, \Q, \Q/\Z, 
\bigoplus_p\Z/p, \prod_p \Z/p$ and moreover explores moving between
the various models. For example the abelian group $\Q$ corresponds to 
the diagrams 
$$\xymatrix{
\Q\ar[r]\ar[d]&\Q\ar[d]     &&\Q\ar[r]\ar[d]&\Q\ar[d]     &&\Q\ar[r]\ar[d]&\Q\ar[d]    \\
\Q\tensor \prod_p\Z_p^{\wedge}\ar[r]& \Q\tensor \prod_p\Z_p^{\wedge}&&
\{ \Q_p\}_p \ar[r] &\prod_p\Q_p&&0\ar[r]& 0 
}$$
in the adelic, separated and complete modules respectively. Similarly 
the abelian group $\Q/\Z$ corresponds to 
the diagrams 
$$\xymatrix{
\Q/\Z\ar[r]\ar[d]&0\ar[d]     &&\Q/\Z\ar[r]\ar[d]&0\ar[d]     &&\Q/\Z\ar[r]\ar[d]&0\ar[d]    \\
\bigoplus_p \Z/p^{\infty}\ar[r]& 0&&
\{ \Z/p^{\infty}\}_p \ar[r] &\Q\tensor \prod_p\Z/p^{\infty}&&\{ \Sigma
\Z_p^{\wedge}\}_p\ar[r]& \Q\tensor\prod_p\Sigma \Z_p^{\wedge} 
}$$
in the adelic, separated and complete modules respectively.  
\end{remark}

\begin{remark}
In this case all of the rings appearing are commutative, and the equivalence is monoidal, so we may wonder about invertible modules. Over $\bZ$, invertible  modules are suspensions of $\bZ$, but when $\bZ$ is
replaced by a number ring $R$, this is more interesting: 
if $M$ is an invertible module $M_{(0)}\cong R_{(0)}$,
and $M_{\fp}\cong R_\fp^{\wedge}$ so
that the module is specified by non-zero elements $x_\fp\in R_{(0)} \tensor
R_{\fp}^{\wedge}$ for each closed point $\fp$. The module is isomorphic to $R$ if
$(x_\fp)$ comes from $R_{(0)}$, so invertible modules  correspond to 
$[R_{(0)}\tensor \prod_\fp R_\fp^{\wedge}]^\times/R_{(0)}^{\times}$ in
a familiar way. 
\end{remark}

\section{Quasi-coherent sheaves on a curve}
\label{sec:qcsheaves}

Let $C$ be an irreducible curve over an algebraically closed field $k$ with
structure sheaf $\cO=\cO_C$, and ring of rational functions $\cK$. We
take $\cC=\Qc (C)$ to be the category of (complexes of) sheaves of
quasi-coherent modules over $C$. The
Balmer spectrum consists of the points of $C$, with a single generic
point containing all the closed points. 
The Adelic Approximation Theorem states that we have a homotopy pullback 
of the diagram 
$$\left( \begin{gathered}\xymatrix{
\cO \ar[r]\ar[d]& \cK\ar[d]\\ 
 \prod_x\cO_x^{\wedge}\ar[r] & \cK\tensor \prod_x \cO_x^{\wedge} 
}  \end{gathered} \right) 
$$

The category $\Qc(C)$ of (chain complexes of)
quasi-coherent sheaves of modules has strict generization, so we may
form the  skeleton of the separated model for  $\Qc(C)$. This
is the category
$$
\Qc(C)_{\mathrm{sep}}^{\qcie} =\bbGamma
\left( \begin{gathered}\xymatrix{
& \cK\modules \ar[d]|{\blacksquare}^{j_\ast} \\ 
 \prod_x \cO_x^{\wedge}\modules  
\ar[r]|-{\text{\Large \textbullet}}_-{\cK \otimes} & \cK\tensor \prod_x \cO_x^{\wedge}\modules  
}  \end{gathered} \right) 
$$
where the product is over closed points $x$. We do not make the adelic
or complete models explicit.

Finally, we note the connection between the Adelic Approximation
Theorem and cohomology of line bundles. 
Tensoring the pullback square for $\cO$ with the line bundle $\cO (D)$ of a divisor $D=\sum_x n_x
(x)$ and taking global sections, there is an exact sequence
$$\xymatrix{
0\ar[r]& H^0(C; \cO (D))\ar[r]&
\cK \times \prod_x\cO_x^{\wedge}\ar[r]^{\langle i, t_x^{-n_x}\rangle}& 
\cK \tensor \prod_x \cO_x^{\wedge}\ar[r]& H^1(C; \cO (D))\ar[r]& 0
}$$
where $t_x$ is a function vanishing to first order at $x$. This
recovers Weil's  residue approach to calculating cohomology as
explained in \cite[II.5]{Serre}

Finally, we note that the Adelic Approximation  Square embodies the
comparison between the Cousin and residue complexes: the top row
gives the Cousin complex
$$\unit \lra L_{\gen}\unit \lra [L_{\gen}\unit] /\unit$$
(which would be $\Z \lra \Q \lra \Q/\Z$ for abelian groups)
and the Mayer-Vietoris sequence of the Hasse-Tate square is Weil's 
residue complex as above (it would be $\Z \lra \Q\oplus \prod_p\Z_p^{\wedge}\lra \Q\tensor
\prod_p\Z_p^{\wedge}$ for abelian groups).

\section{Stable modules for the Klein 4-group}

Our next example comes from modular representation theory.  Let
$G$ be a finite group, and $k$ a field with characteristic
 dividing the order of $G$. We consider the category of
 $kG$-modules. Say that two morphisms $f,g \colon M \to N$ are {\em
   stably homotopic} if $f-g \colon M \to N$ factors through a
 projective  module. The category of $kG$-modules has a model
 structure in which  the weak equivalences are the stable
 equivalences, the fibrations are the epimorphisms and the
 cofibrations are the monomorphisms~\cite[Theorem 2.2.12]{HoveyMC}. The homotopy category of this
 model category is the (big) stable module category
 $\underline{\text{Mod}}kG$.  Passage to syzygies  gives the
 desuspension functor in the stable module category. Tensor product
 over $k$ equips the stable module category with the structure of a
 rigidly small-generated tensor-triangulated category with monoidal unit  $k$.
 
The Balmer spectrum of the small objects is the projective scheme associated to the group
cohomology ring~\cite{BensonCarlsonRickard}:
$$\spco (\underline{\text{Mod}}k G) \cong \operatorname{Proj}
H^\bullet (G;k).$$
In particular, since $H^\bullet(G;k)$ is a Noetherian graded ring by
Venkov's Theorem, the Balmer spectrum is Noetherian. 

When $G=E$ is the Klein 4-group $E = \langle g,h \mid g^2=h^2=1, gh=hg
\rangle$ and $k$ is an algebraically closed field of
characteristic 2, the cohomology ring $H^*(E;k)=\mathrm{Ext}_{kE}^*(k,k)=k[u,v]$ is easy to deal with and
the representation theory  is very well understood.  We take
$x=g-1$ and $y=h-1$ and note  $kE \cong k[x,y]/(x^2,y^2)$. From this
point of view, the stable module category of $kE$ is the singularity category of
$k[x,y]/(x^2, y^2)$ and  the BGG correspondence \cite{BGG}
states  that the singularity category is equivalent to the bounded derived category of
$\bP^1(k)=\mathrm{Proj}(k[u,v])$, giving another identification of the Balmer spectrum.

 It is helpful to display $kE$ as 
$$
\xymatrix{
& 1 \ar@{-}[dl]  \ar@{=}[dr] &\\
\ar@{=}[dr] x & & y \ar@{-}[dl]  \\
& xy &
}
$$
where a single line refers to multiplication by $x$ and a double line
refers to multiplication by $y$.
One sees syzygies are  as follows
\begin{align*}
 \Omega k =  
\begin{gathered}  \xymatrix{u^{-1} \ar@{=}[dr] & & v^{-1}
     \ar@{-}[dl] \\ & \bullet } \end{gathered}, \;
 \Omega^2 k = 
 \begin{gathered}  \xymatrix{u^{-2} \ar@{=}[dr] & & u^{-1}v^{-1}
     \ar@{-}[dl] \ar@{=}[dr] & & v^{-2} \ar@{-}[dl] \\ & \bullet & &
     \bullet} \end{gathered}, \ldots 
 \end{align*}
The labels refer to the fact that $H^n(E)=\Hom(\Omega^nk, k)$ is the dual of the top
vector space in the diagram: we have chosen the pairing between $k[u,v]$ and
$k[u^{-1},v^{-1}]$ by multiplying and finding the constant term.
Of course $H^*(E)=\mathrm{Symm(E^*)}$, and $\{u,v\}$ is the dual basis to
$\{x,y\}$.  Similarly
\begin{align*} 
\Omega^{-1}k = \begin{gathered}
\xymatrix{& \bullet  \ar@{-}[dl]
    \ar@{=}[dr] \\ u && v}  \end{gathered}, \; 
\Omega^{-2}k = \begin{gathered}
\xymatrix{& \bullet  \ar@{-}[dl] \ar@{=}[dr] &&\bullet  \ar@{-}[dl] \ar@{=}[dr] &\\
 u^2 && uv&&v^2}  \end{gathered}, \ldots 
\end{align*}
The labels refer to the fact that $H^n(E)=\Hom (k, \Omega^{-n}k)$ is
canonically isomorphic to the bottom row in the diagram. This type of
labelling applies to general modules. In the stable module category, any module is equivalent to one
with no free summands, and for such an $M$, we see that $(xy)=(x,y)^2$
acts as zero.  We therefore write  $V_l=\ann_M (x,y)$ ($l$ for
`lower')  for the socle, and we may choose  a complementary subspace
$V_u$ ($u$ for `upper').  Multiplication by $x,y$ gives
 two maps $x,y: V_u \lra V_l$.  We also have a canonical isomorphism
 $[k,M]_0=V_l$. The group  $[k,M]_*$ has the structure of a module over
 $[k,k]^*=\Hhat^*(E)$, and $V_l$ is its degree 0 part. 
 We will constantly be passing between degree 0
 information and the full graded module, and it is essential to keep
 clear which of the worlds each statement lives in. 
The syzygies of $k$ have the property that  there are endomorphisms 
$t: V_u\lra V_u$ and $t:V_l \lra V_l$ with $y=xt$ and $tx=y$ as
maps $V_u\lra V_l$, and it is natural to think as if $t=v/u$.

We will also need another family of modules. For 
$\zeta \in H^n(G,k) = \Hom_{kG}(\Omega^n k,k)$,  we may take
$\bL_\zeta=\ker (\zeta : \Omega^n k \lra k)$, which only depends on
$\zeta$ up to scalar multiplication. Since $k$ is algebraically
closed, all homogeneous elements of $H^\ast(E,k) = k[u,v]$ are products of 
degree 1 elements, so we need only consider $\bL_{\lambda u+\mu v}$ for
$[\lambda: \mu]\in \bP^1(k)$. The module $k$ has support $\bP^1(k)$ and
for closed points $\zeta$, the module $\bL_{\zeta}$ has support
$\{ \zeta \}$.

Nullifying $\bL_{\zeta}$ has the effect of inverting $\zeta $ in
homotopy. Now $[k,k]_*=\Hhat^*(E;k)$, and the norm sequence of
$H^*(E)$-modules is the short exact sequence
$$0\lra H^*(E)\lra \Hhat^*(E)\lra \Sigma_{-1} H_*(E)\lra 0, $$
where $\Sigma_n$ is the cohomological suspension
$(\Sigma_nM)^c=M^{c-n}$. Since $H_*(E)$ is torsion, 
this shows that inverting $\zeta $ makes $H^*(E)\lra \Hhat^*(E)$ into
an isomorphism. Localizing at the prime $(\zeta)$ involves nullifying
all $\bL_{\psi}$ with $\psi\neq \zeta$ and the generic localization
$L_{\gen}k$ involves nullifying all $\bL_{\zeta}$
 Thus 
$$[k,L_{\zeta}k]_*=k[u,v]_{(\zeta)}, [k, L_{\gen}k]_*=k[u,v]_{(0)}. $$
Note here that $k[u,v]_{(0)}$ is $k(t)$ in degree 0 with $t=v/u$ and
any non-zero element $\zeta \in H^1(E)$ is a periodicity
element. Similarly in $k[u,v]_{(\zeta)}$, except that the periodicity
element is any element not a multiple of $\zeta$.

In fact we can see in both cases how to realize these modules, by
taking the direct limit along multiples of maps $\zeta\in H^1(E)$. It
is convenient to think of $\zeta: k \lra \Omega^{-1}k$ and so forth, so
using the identification of the lower module with the homotopy, we have
$$L_\gen k=  \left( \begin{gathered} \xymatrix{
k(t) \ar@/_/@<-2ex>[d]_x^1 \ar@/^/@<1ex>@{=>}[d]_y^t \\
k(t) 
} \end{gathered} \right)$$
Of course it is natural to think of $k(t)$ the ring of rational functions on $\bP^1(k)$, and similarly, for a closed point $\zeta \in \bP^1(k)$ we can describe $L_\zeta k$ using the rational functions
regular at $\zeta$.
$$L_\zeta  k=  \left( \begin{gathered} \xymatrix{
k[u,v]_{(\zeta)} \ar@/_/@<-2ex>[d]_x^1 \ar@/^/@<1ex>@{=>}[d]_y^t \\
k[u,v]_{(\zeta)}
} \end{gathered} \right)$$

Next, we may calculate the derived completions in general by the formula 
$$\Lambda_{\zeta }M=\holim_s M/\zeta^s. $$ 
Picking $\zeta =u$ for definiteness, one finds
$$[k,\Lambda_{u}k]_*=k[u,v, v^{-1}]_{u}^{\wedge},  $$
where the inversion of $v$ could be replaced by the inversion of any
other degree 1 element (except for multiples of
$u$).   We note that the degree 0 part of this is the completed stalk
of the point $\infty$ (defined by $u=0$): 
$$[k,\Lambda_u k]_0=\left[ \cO_{\bP^1(k)}\right]_{\infty}^{\wedge}. $$

Continuing to the completion we want, we have the usual splitting
$$(L_{\gen}k)/k\simeq \bigoplus_{\zeta}(k[1/\zeta])/k$$
and hence
$$\Lambda_0M:=\Hom(L_{\gen}k/k, M)\simeq \prod_{\zeta} \Lambda_{\zeta}M.$$

The Adelic Approximation Theorem states that  there is a homotopy pullback in the category $\underline{\text{Mod}}(kE)$
$$\xymatrixcolsep{20pt}\xymatrixrowsep{40pt}\xymatrix{
k \ar[r] \ar[d] & L_{\gen}k \ar[d] \\ 
\Lambda_0 k \ar[r] &
L_{\gen} \Lambda_0 k
}$$
A localization of this is considered in relation to the Chromatic
Splitting Conjecture in \cite[Example 3.18]{BHV}.

Applying $[k, \cdot ]_*$ we obtain the square
$$\xymatrixcolsep{20pt}\xymatrixrowsep{40pt}\xymatrix{
\Hhat^*(E) \ar[r] \ar[d] &  k(t)[u,u^{-1}]\ar[d] \\ 
\prod_{\zeta}k[u,v, (\zeta')^{-1}]_{\zeta}^{\wedge}
\ar[r] & 
(\prod_{\zeta}k[u,v, (\zeta')^{-1}]_{\zeta}^{\wedge})_{(0)}
}$$
where $\zeta'$ is any element of $H^1(E)$ not a multiple of $\zeta$.
The associated Mayer-Vietoris sequence of this square is the residue complex for
calculating the cohomology of $\bP^1(k)$ with coefficients in twists
of the structure sheaf as in Section \ref{sec:qcsheaves}.

One can then use the above splitting to build an adelic, separated and
complete model. 

\section{\texorpdfstring{$\mathbb{T}$}{T}-equivariant rational stable
  homotopy theory}
\label{sec:Tspectra}

We finish the examples with a discussion of the case that motivated
this work, namely the category of 
$\T$-equivariant rational cohomology theories, where $\T$ is the
circle group. 

For a general  compact Lie group $G$, we say that an inclusion $K \subseteq H$ of subgroups of $G$ is \emph{cotoral} if
$K$ is normal in $H$ and $H/K$ is a torus. The  Balmer spectrum  consists of the
 conjugacy classes of subgroups under cotoral inclusion~\cite{Greenlees19}. 

In particular, the Balmer spectrum of $\textbf{Sp}_\bQ^{\T}$ is
$$
\spco(\textbf{Sp}^{\T}_\bQ) = 
\begin{gathered}
\xymatrixcolsep{0ex}\xymatrixrowsep{0ex}\xymatrix{&& \T &&\\
 &&&&\\
\cdots &C_4 \ar[uur]&C_3 \ar[uu]&C_2 \ar[uul]&C_1\ar[uull]}
\end{gathered}
$$
where $C_i$ is the cyclic group of order $i$.  The diagram $\bS_{ad}$ coming from the Adelic Approximation Theorem is then
$$\xymatrix{
& L_T \bS \ar[d] \\ \prod_n \Lambda_{C_n} \bS \ar[r] & L_T \prod_n \Lambda_{C_n} \bS \rlap{ .}
}$$
Writing $\cF$ for the family of finite subgroups, this can be written as
$$\bS_{\ad}=\left(\begin{gathered}\xymatrix{& \widetilde{E}\cF \ar[d] \\ \prod_n DE \langle n \rangle_+ \ar[r] & \widetilde{E}\cF \wedge \prod_n DE \langle n \rangle_+ } \end{gathered}\right) \simeq \left(\begin{gathered}\xymatrix{& \widetilde{E}\cF \ar[d] \\ DE\cF_+ \ar[r] & \widetilde{E}\cF \wedge DE\cF_+ } \end{gathered}\right)$$
which is the usual  $\cF$-Tate square for rational $\T$-equivariant
homotopy theory~\cite{Greenlees99} (where $E\langle n \rangle$ is the
mapping cone of $E[\subset C_n]_+\lra E[\subseteq C_n]_+$).  

The adelic model has bifibrant objects
$$
\left(\textbf{Sp}_\bQ^{\T}\right)_{\mathrm{ad}}^{\wqce} = \bbGamma \left( \begin{gathered}\xymatrix{
& (\widetilde{E}\cF)\modules_{\textbf{Sp}_\bQ^{\T}} \ar[d]|{\text{\Large $\circ$}} \\ 
(\prod_n DE \langle n \rangle_+)\modules_{\textbf{Sp}_\bQ^{\T}} \ar[r]|-{\text{\Large $\circ$}} & (\widetilde{E}\cF \wedge \prod_n DE \langle n \rangle_+)\modules_{\textbf{Sp}_\bQ^{\T}}
}  \end{gathered} \right) 
$$
This model is the first step in \cite{GreenleesShipley18} to the algebraic model for rational
$\T$-equivariant spectra. One then shows that taking $\T$-fixed points
termwise gives an equivalence, and the resulting ring spectra are
formal, with homotopy
$$\unitadalg=
\left( \begin{gathered}\xymatrix{
& \bQ\ar[d]\\
\prod_n \bQ [c]
\ar[r] & \cE^{-1}\prod_n 
\bQ [c]
}  \end{gathered} \right) 
$$
We note that there is no single graded ring giving rise to this adelic
diagram, so the use of {\em diagrams} of rings is
essential. Taking modules over this diagram of rings,
we obtain the homotopical version of the standard algebraic model
$$
(\unitadalg \modules)^{\wqce}:=\bbGamma 
\left( \begin{gathered}\xymatrix{
& \bQ \modules \ar[d]|{\text{\Large $\circ$}} \\ 
(\prod_n \bQ [c])\modules
\ar[r]|-{\text{\Large $\circ$}} & (\cE^{-1}\prod_n 
\bQ [c])\modules
}  \end{gathered} \right) 
$$ 
The generic localization is inverting the set $\cE$ generated by the
Euler classes $e(z^n)$ (which is $c$ at subgroups of $C_n$ and 1
otherwise). It is therefore strictly smashing, so  we can form skeletons of the
adelic, separated and complete models.  The proof that the separated
and complete diagrams give models for rational
$\T$-spectra appears here for the first time. The standard model is the 
cellular skeleton of the algebraic model 
$$
dg \cA (\T)= (\unitadalg \modules)^\qce
 :=\bbGamma \left( \begin{gathered}\xymatrix{
& \bQ \modules \ar[d]|{\text{\Large \textbullet}} \\ 
(\prod_n \bQ [c])\modules
\ar[r]|-{\text{\Large \textbullet}} & (\cE^{-1}\prod_n 
\bQ [c])\modules
}  \end{gathered} \right) .
$$ 
This is the category of differential graded objects of the standard 
abelian category $\cA (\T)$ of \cite{Greenlees99}. (The category $\cA
(\T)=\cA_{\ad}(\T)$  is obtained by interpreting `module' in the
classical restricted sense of the abelian category of graded modules
with no differential).   
The  skeleton of the separated model is 
$$
dg \cA_{\sep} (\T)= (\unitadalg \modules)_{\sep}^\qcie 
:=\bbGamma \left( \begin{gathered}\xymatrix{
& \bQ \modules \ar[d]|{\blacksquare}\\ 
\prod_n \left( \bQ [c]\modules\right)
\ar[r]|-{\text{\Large \textbullet}} & (\cE^{-1}\prod_n 
\bQ [c])\modules
}  \end{gathered} \right) .
$$
This is the category of differential graded objects of the separated 
abelian category $\cA_{\sep} (\T)$ defined as for $\cA (\T)$.   
The  skeleton of the complete model is 
$$
dg \cA_{L\kappa} (\T)= (\unitadalg \modules)_{L \kappa}^\qce
:=\bbGamma \left( \begin{gathered}\xymatrix{
& \bQ \modules \ar[d] \\ 
\prod_n \left(L_0^{(c)} -\bQ [c] \modules \right)
\ar[r]|-{\text{\Large \textbullet}} & (\cE^{-1}\prod_n 
\bQ [c])\modules
}  \end{gathered} \right) .
$$
This is the category of differential graded objects of the complete 
abelian category $\cA_{L \kappa} (\T)$ defined as for $\cA (\T)$.   

\addtocontents{toc}{\vspace{5mm}}

\end{document}